\newcommand{\boundellipse}[3]% center, xdim, ydim
{(#1) ellipse (#2 and #3)
}
\theoremstyle{plain}
\newtheorem{thm}{Theorem}[section]
\newtheorem{cor}[thm]{Corollary}
\newtheorem{lem}[thm]{Lemma}
\newtheorem{prop}[thm]{Proposition}
\theoremstyle{remark}
\newtheorem*{defn}{Definition}
\newtheorem*{rem}{Remark}%[section]
\numberwithin{equation}{section}
\newcommand{\He}{\mathrm{He}}
\newcommand{\fluct}{\operatorname{fluct}}
\newcommand{\Peak}{P}
\newcommand{\SignPeak}{SLP}
\newcommand{\LocPeak}{LP}
\newcommand{\calB}{{\mathcal B}}
\newcommand{\calN}{{\mathcal N}}
\newcommand{\calE}{{\mathcal E}}
\newcommand{\Prob}{{\mathbb{P}}}
\newcommand{\Osc}{{\mathcal G}}
\newcommand{\R}{{\mathbb R}}
\newcommand{\Z}{{\mathbb Z}}
\newcommand{\D}{{\mathbb D}}
\newcommand{\C}{{\mathbb C}}
\newcommand{\E}{{\mathbb E}}
\newcommand{\eps}{{\varepsilon}}
\newcommand{\Qloc}{Q_{\mathrm{loc}}}
\renewcommand{\d}{{\partial}}
\newcommand{\dbar}{\bar{\partial}}
\newcommand{\1}{\mathbf{1}}
\newcommand{\bigO}{\mathcal{O}}
\newcommand{\dist}{\operatorname{dist}}
\newcommand{\supp}{\operatorname{supp}}
\newcommand{\lnorm}{\left\|}
\newcommand{\rnorm}{\right\|}
\def\norm#1{\lnorm {#1} \rnorm}
\newcommand*\bigcdot{\mathpalette\bigcdot@{.5}}
\newcommand*\bigcdot@[2]{\mathbin{\vcenter{\hbox{\scalebox{#2}{$\m@th#1\bullet$}}}}}
\tikzset{middlearrow/.style={
			decoration={markings,
				mark= at position 0.6 with {\arrow{#1}} ,
			},
			postaction={decorate}
		}
	}
\tikzset{->-/.style={decoration={
				markings,
				mark=at position #1 with {\arrow{latex}}},postaction={decorate}}}
\tikzset{-<-/.style={decoration={
				markings,
				mark=at position #1 with {\arrowreversed{latex}}},postaction={decorate}}}
				\tikzset{
	master/.style={
		execute at end picture={
			\coordinate (lower right) at (current bounding box.south east);
			\coordinate (upper left) at (current bounding box.north west);
		}
	},
	slave/.style={
		execute at end picture={
			\pgfresetboundingbox
			\path (upper left) rectangle (lower right);
		}
	}
}
\tikzset{cross/.style={cross out, draw,
         minimum size=2*(#1-\pgflinewidth),
         inner sep=0pt, outer sep=0pt}}
\def\XXint#1#2#3{{\setbox0=\hbox{$#1{#2#3}{\int}$}
\vcenter{\hbox{$#2#3$}}\kern-.5\wd0}}
\begin{document}

\title[Free energy in the random normal matrix model]{Free energy and fluctuations in the random normal matrix model with spectral gaps}

\author{Yacin Ameur}
\address{Yacin Ameur\\
Department of Mathematics\\
Lund University\\
22100 Lund, Sweden}
\email{ Yacin.Ameur@math.lu.se}

\author{Christophe Charlier}
\address{Christophe Charlier\\
Department of Mathematics\\
Lund University\\
22100 Lund, Sweden}
\email{Christophe.Charlier@math.lu.se}

\author{Joakim Cronvall}
\address{Joakim Cronvall\\
Department of Mathematics\\
Lund University\\
22100 Lund, Sweden}
\email{Joakim.Cronvall@math.lu.se}

\keywords{Coulomb gas; partition function; fluctuation; disconnected droplet; spectral outpost; Fisher-Hartwig singularity; orthogonal polynomials; Heine distribution.}

\subjclass[2010]{60B20; 82D10; 41A60; 58J52; 33C45; 33E05; 31C20}

\begin{abstract} We study large $n$ expansions for the partition function of a Coulomb gas
\begin{equation*}Z_n=\frac 1 {\pi^n}\int_{\mathbb{C}^n}\prod_{1\le i<j\le n}|z_i-z_j|^2\prod_{i=1}^n e^{-nQ(z_i)}\, d^2 z_i,\end{equation*}
where $Q$ is a radially symmetric confining potential on the complex plane $\mathbb{C}$.

The droplet is not assumed to be connected, but may consist of a number of disjoint annuli and possibly a central disk. The boundary condition is ``soft edge'', i.e.,  $Q$ is smooth in a $\mathbb{C}$-neighbourhood of the droplet.

We include the following possibilities: (i) existence of ``outposts'', i.e., components of the coincidence set which fall outside of the droplet,
(ii) a Fisher-Hartwig singularity at the origin, (iii) perturbations $Q-\frac h n$ where $h$ is a smooth radially symmetric test-function.

In each case, the free energy $\log Z_n$ admits
a large $n$ expansion of the form
\begin{equation*}\log Z_n=C_1n^2+C_2n\log n+C_3 n+C_4\log n+C_5+\mathcal{G}_{n}+o(1)\end{equation*}
where $C_1,\ldots,C_5$ are certain geometric functionals. The $n$-dependent term $\mathcal{G}_n$ is bounded as $n\to\infty$; it arises in the presence of spectral gaps.

We use the free energy expansions to study the distribution of fluctuations of linear statistics. We prove that the fluctuations are well approximated by the sum of a Gaussian and certain independent terms which provide the displacement of particles from one component to another. This displacement depends on $n$ and is expressed in terms of the Heine distribution. We also prove (under suitable assumptions) that the number of particles which fall near a spectral outpost converges to a Heine distribution.
\end{abstract}

\maketitle

\section{Introduction}\label{intro}

\subsection{Potential-theoretic setup} \label{sub1} It is convenient to define some key objects which are used throughout. We will keep it brief and refer to \cite{ST} as
a general source for the potential theory, and to the introduction to \cite{ACC} for more detailed background and citations.

To begin with, we fix a function $Q:\C \to\R\cup\{+\infty\}$ called the \textit{external potential}, which is radially symmetric, i.e., $Q(z)=Q(|z|)$, and confining in the sense that
$\liminf_{z\to\infty}\left(Q(z)/\log|z|\right)>2.$

Assuming
that $Q$ is lower semicontinuous and finite on some set of positive capacity, Frostman's theorem implies the existence of a unique \textit{equilibrium measure}
$\sigma$, which minimizes the weighted logarithmic energy
\begin{equation*}I_Q[\mu]=\int_{\C^2}\log\frac 1 {|z-w|}\, d\mu(z)\,d\mu(w)+\int_\C Q\, d\mu,\end{equation*}
over all compactly supported Borel probability measures $\mu$. The support of the equilibrium measure is called the \textit{droplet}  and is denoted $S=S[Q]:=\supp \sigma$.

\begin{figure}
\begin{tikzpicture}[master]
\node at (0,0) {\includegraphics[width=5.7cm]{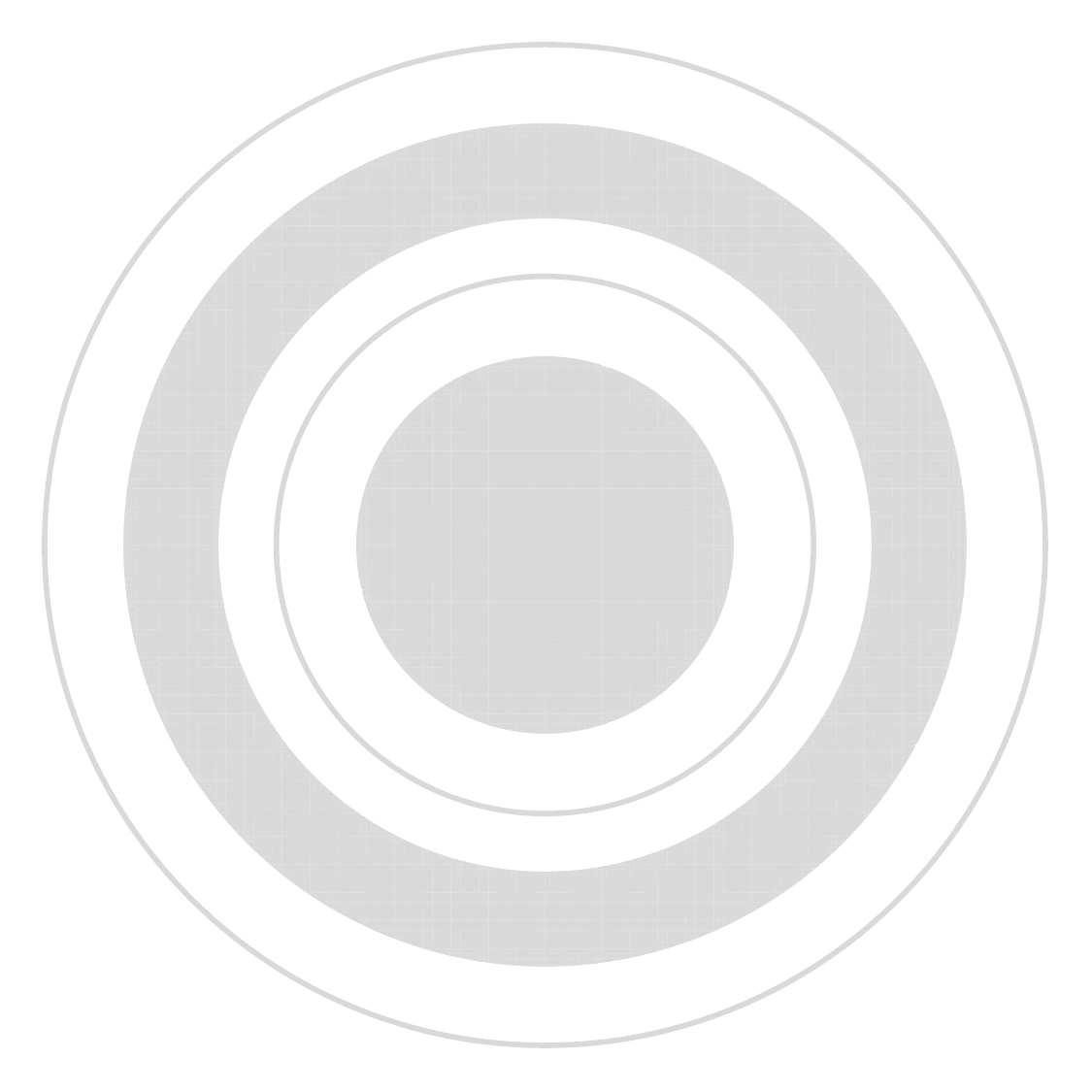}};

\draw[fill] (0,0) circle (0.02);
\draw[fill] (0.984,0) circle (0.02);
\draw[fill] (1.7,0) circle (0.02);
\draw[fill] (2.2,0) circle (0.02);
\node at (0,0.15) {\tiny $a_{0}$};
\node at (0.984,0.15) {\tiny $b_{0}$};
\node at (1.7,0.15) {\tiny $a_{1}$};
\node at (2.2,0.15) {\tiny $b_{1}$};

\node at (0,-3) {\small $a_{0}=0$};
\end{tikzpicture}\hspace{0.45cm}
\begin{tikzpicture}[slave]
\node at (0,0) {\includegraphics[width=5.7cm]{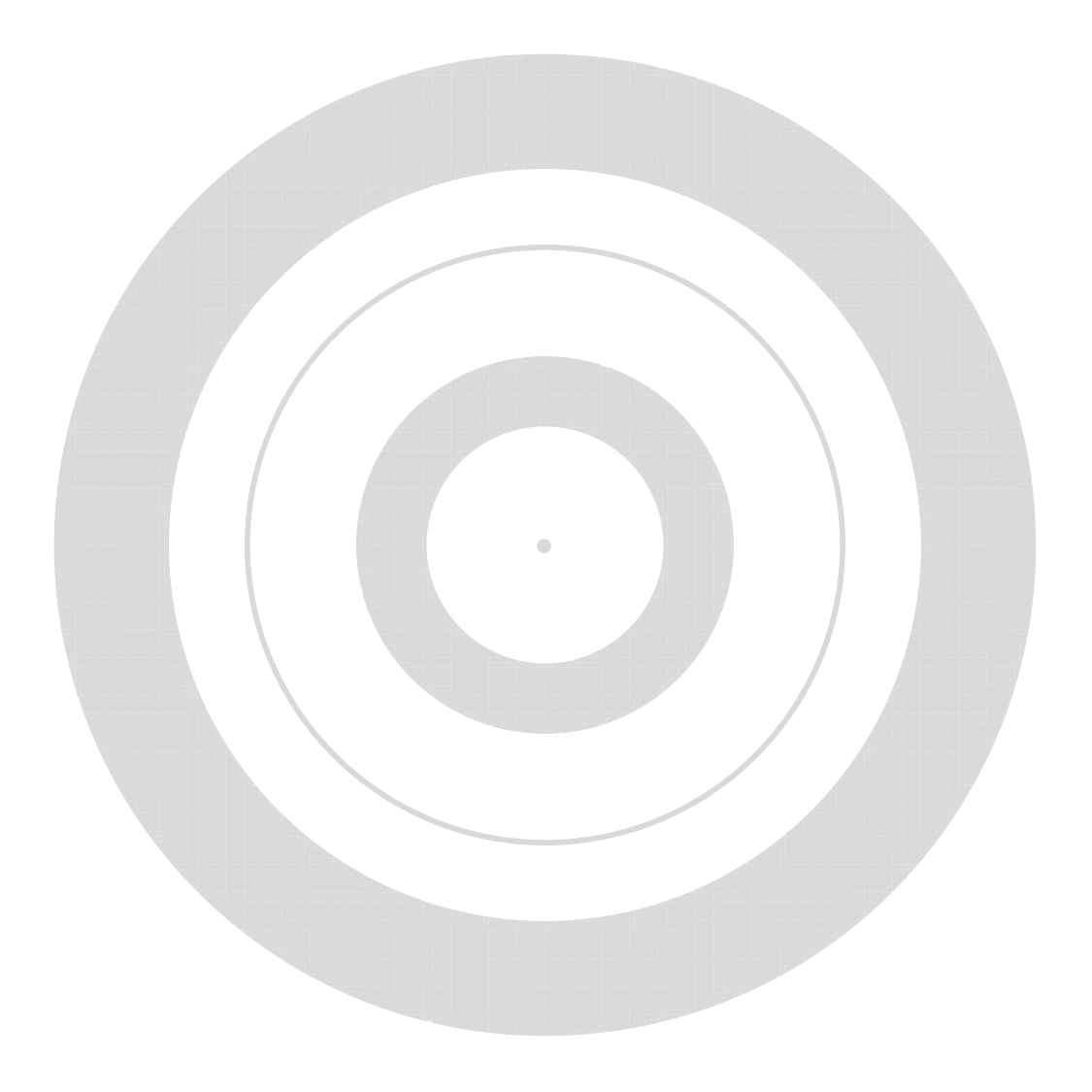}};
\draw[fill] (0.615,0) circle (0.02);
\draw[fill] (0.984,0) circle (0.02);
\draw[fill] (1.955,0) circle (0.02);
\draw[fill] (2.56,0) circle (0.02);
\node at (0.615,0.15) {\tiny $a_{0}$};
\node at (0.984,0.15) {\tiny $b_{0}$};
\node at (1.955,0.15) {\tiny $a_{1}$};
\node at (2.56,0.15) {\tiny $b_{1}$};

\node at (0,-3) {\small $a_{0}>0$};
\end{tikzpicture}
\caption{\label{fig:droplets} $N=1$ and $S^{*}$ is shown in gray. Left: Central disk droplet with two outposts.
Right: Annular droplet with two outposts.}
\end{figure}

If $Q$ is $C^2$-smooth in a neighbourhood of $S$, then $\sigma$ is absolutely continuous with respect to the
normalized Lebesgue measure
$dA(z):=\frac 1 \pi\, dxdy$, $(z=x+iy)$,
and takes the form
\begin{equation*}
d\sigma(z)=\Delta Q(z)\,\1_S(z)\, dA(z),\end{equation*}
where we write $\Delta:=\d\dbar=\frac 1 4 (\d_{xx}+\d_{yy})$ and $\1_S$ for the characteristic function of $S$.

The \textit{obstacle function} $\check{Q}(z)$ (with respect to the obstacle $Q$) is by definition the pointwise supremum of the set of subharmonic functions $s(z)$ on $\C$ such that $s\le Q$ everywhere and $s(w)\le 2\log|w|+\bigO(1)$ as $w\to\infty$. It is easy to check that $\check{Q}$ is globally subharmonic and $C^{1,1}$-smooth with $\check{Q}\le Q$ everywhere and
$\check{Q}(z)=2\log|z|+\bigO(1)$ as $z\to\infty$.

The \textit{coincidence set} $S^*=S^*[Q]$ for the obstacle problem is defined by
\begin{equation*}
S^*:=\{z\in\C\,;\, Q(z)=\check{Q}(z)\}.
\end{equation*}

\textit{We assume throughout the rest of the paper that $Q$ is $C^6$-smooth in a neighbourhood of $S^*$.}

Evidently $S$ and $S^*$ are compact sets, $\Delta Q\ge 0$ at each point of $S^*$, and $\sigma(S^*\setminus S)=0$. We refer to points of
$S^*\setminus S$ as \textit{shallow points}, and call a connected component of $S^*\setminus S$ an \textit{outpost} of the droplet.

A connected component of $S^*$ is either a disk $\D_b=\{z\,;\, |z|\le b\}$, an annulus $A(a,b)=\{z\,;\,a\le |z|\le b\}$, the singleton $\{0\}$, or a circle $\{{z\,;\,}|z|=t\}$.
With a mild restriction, we will assume that $S^*$ has only finitely many connected components.
Thus
\begin{equation}\label{droplet}
S=\bigcup_{\nu=0}^N A(a_\nu,b_\nu),
\end{equation}
where $0\le a_0<b_0<a_1<b_1<\cdots<a_N<b_N$, and $S^*$ is obtained by possibly adjoining finitely many outposts $\{z\,;\,|z|=t_m\}$ where $t_m\ge 0$ (see Figure \ref{fig:droplets}). Throughout, we will use the notation $S^{\nu}:=A(a_\nu,b_\nu)$, $\nu=0,\ldots,N$.

In what follows, we always assume that we have the \textit{strict} subharmonicity $\Delta Q>0$ in a neighbourhood of $S^*$.

We distinguish between two possibilities for the topology, in terms of the \textit{Euler characteristic} $\chi(S)$:
\begin{enumerate}[label=(\roman*)]
\item $a_0=0$: ``Central disk droplet''. In this case $\chi(S)=1+ N\cdot 0 = 1$.
\item $a_0>0$: ``Annular droplet''. In this case $\chi(S) = (N+1)\cdot 0 = 0$.
\end{enumerate}

\subsection{Coulomb gas ensembles} \label{cge} Besides $Q$ we shall consider perturbations of the form
\begin{equation}\label{pert}
\tilde{Q}(z):= Q(z)-\frac s nh(z) -\frac {\alpha} n\ell(z),
\end{equation}
where $\ell$ is the logarithm
\begin{equation}\label{elldeff}
\ell(z):=2\log |z|,
\end{equation}
while $s,\alpha$ are real parameters with $\alpha>-1$.
We assume throughout that the test function $h(z)$ in \eqref{pert} is radially symmetric and $C^4$-smooth in a neighbourhood of $S^*$.
(\footnote{The behaviour outside of such a neighbourhood is less sensitive and also less interesting; for example requiring that $h$ is continuous and bounded on $\C$ will do. For simplicity, we adopt this convention in what follows.})

By definition, the partition function of
the Coulomb gas $\{z_j\}_1^n$ in
potential \eqref{pert}, at an inverse temperature $\beta>0$, is the $n$-fold integral
\begin{equation}\label{pf1}
Z_n^\beta=Z_n^\beta[\tilde{Q}]=\int_{\C^n} |\Delta_n(z)|^\beta e^{-n\sum_{j=1}^n \tilde{Q}(z_j)}\, dA_n(z)
\end{equation}
where $\Delta_n(z)=\prod_{j>k}(z_j-z_k)$ is the Vandermonde determinant and $dA_n=(dA)^{\otimes n}$ is the normalized Lebesgue measure on $\C^n$.

In the following we set $\beta=2$, and  we write
$Z_n:=Z_{n,sh}^{(\alpha)}$ for the corresponding partition function \eqref{pf1}.

The Coulomb gas in potential $\tilde{Q}$ (at $\beta=2$) is a random sample $\{z_j\}_1^n$ drawn with respect to the following Gibbs distribution on $\C^n$,
\begin{equation}\label{gibbs0}d\tilde{\Prob}_n(z)=\frac 1 {Z_n}|\Delta_n(z)|^2 e^{-n\sum_{j=1}^n \tilde{Q}(z_j)}\, dA_n(z).\end{equation}

In the case $s=0$ we drop the tildes and write $\mathbb{P}_n$ or $\mathbb{P}_n^{(\alpha)}$ depending on whether or not $\alpha=0$.

\smallskip

Inspired by recent progress in \cite{BKS} we shall prove large $n$ expansions for $\log Z_n$ in a variety of new situations.  These expansions depend on a number of ``geometric functionals''
which we now list, for future convenience.

\subsubsection{Geometric functionals} In what follows, $Q$ always refers to the ``unperturbed'' potential,
while $\tilde{Q}$ is the perturbation \eqref{pert}; the measure $d\sigma=\1_S\cdot\Delta Q\, dA$ is the equilibrium measure in
potential $Q$.

We will make use of the following items.

\begin{enumerate}[label=(\Roman*)]
\item The weighted \textit{energy} of the equilibrium measure:
\begin{align}\label{Q-energy}
I_Q[\sigma]=\int_{\C^2}\log\frac 1 {|z-w|}\, d\sigma(z)\, d\sigma(w)+\int_\C Q\, d\sigma.
\end{align}
\item The (negative of the) \textit{entropy}
of the equilibrium measure:
$$E_Q[\sigma]=\int_\C \log\Delta Q\, d\sigma.$$

\item The \textit{Euler characteristic} $\chi(S)$ of the droplet (see above).
\item We now define a geometric functional $F_Q[\sigma]$.
By a slight abuse of notation, this functional is the sum
$F_Q[\sigma]=\sum_{C} F_Q[C]$ where $C$ ranges over the connected components
of $S$. For each component $C$ of $S$, $F_Q[C]$ is defined as follows:

(i) If $C$ is the annulus $A(a,b)$ where $0<a<b$, then we define
\begin{equation}\label{Fq_ann}\begin{split}F_Q[A(a,b)]&:=\frac 1 {12}\log\left[\frac {a^2\Delta Q(a)}{b^2\Delta Q(b)}\right]
-\frac 1 {16}\left[b\frac {\d_r\Delta Q(b)}{\Delta Q(b)}
 -a\frac {\d_r\Delta Q(a)}{\Delta Q(a)}\right]\\
 &\qquad +\frac 1 {24}\int_a^b\left[\frac {\d_r\Delta Q(r)}{\Delta Q(r)}\right]^{\,2}\, r\, dr.\\ \end{split}\end{equation}

(ii) If $C$ is the disc $\D_b$ then we define $F_Q[C]$ by
\begin{equation}\label{Fq_disc}F_Q[\D_b]:=\frac 1 {12}\log\left[\frac 1 {b^2\Delta Q(b)}\right]-\frac 1 {16}b\frac {\d_r\Delta Q(b)}{\Delta Q(b)}+\frac 1 {24}\int_0^b
\left[\frac {\d_r\Delta Q(r)}{\Delta Q(r)}\right]^{\, 2}\, r\, dr.
\end{equation}

\item \label{bgdef} The \textit{Barnes $G$-function} $G(z)$, see e.g.~ \cite[Section 5.17]{NIST} or \cite{AAR}.

\item \label{crind} The \textit{cumulative masses} $M_\nu$ for $-1\le \nu\le N$ are defined by  $M_{-1}=0$ and
\begin{equation}\label{mass}
M_\nu=\sigma(\{|z|\le b_\nu\}),\qquad \nu=0,\ldots,N.
\end{equation}
 We define $x_\nu=x_\nu(n)$ to be the fractional part
$$x_\nu=\{M_\nu n\}:=M_\nu n-m_\nu$$
where the \textit{critical index} $m_\nu$ is the integer part $m_\nu:=\lfloor M_\nu n\rfloor.$ (Note that $M_N=1$.)
\item \label{rmdef} For $0\le \nu\le N-1$, we define constants $\rho_\nu$, $\theta_{\nu,\alpha}=\theta_{\nu,\alpha}(n)$, $c_\nu=c_\nu(h)$ by
\begin{equation}\label{rthdef}\rho_\nu := \frac {b_{\nu}}{a_{\nu+1}},\qquad \theta_{\nu,\alpha}:=\sqrt{\frac {\Delta Q(b_\nu)}{\Delta Q(a_{\nu+1})}}\, \rho_\nu^{2(x_\nu-\alpha)},\qquad c_\nu:=h(a_{\nu+1})-h(b_\nu),\end{equation}
and introduce the shorthand notation
\begin{align}\label{mudef}\mu_\nu=\mu_\nu(s,\alpha;n,h):=\theta_{\nu,\alpha}e^{sc_\nu}.
\end{align}

\item  For fixed $q$ with $0<q<1$ and $z\in\C$, the \emph{q-shifted limiting factorial} is the product (\cite{AAR,GR})
\begin{equation}\label{limpoch}
(z;q)_\infty := \prod_{i=0}^{\infty} (1-zq^i).
\end{equation}

\item \label{oscndef} We define the $n$:th \textit{net displacement term} $\Osc_n(s,\alpha)$ in terms of \eqref{rthdef} -- \eqref{limpoch} by
\begin{align}
\Osc_n(s,\alpha):=&\sum\limits_{\nu = 0}^{N-1} (x_\nu \log \mu_\nu-x_\nu^2\log\rho_\nu)
 +\sum\limits_{\nu = 0}^{N-1} \log[(-\rho_\nu \mu_\nu\,;\, \rho_\nu^2)_\infty] +\sum\limits_{\nu = 0}^{N-1}\log[(-\frac {\rho_\nu}{\mu_\nu}\,;\,\rho_\nu^2)_\infty].\label{Oscn}
\end{align}
\item With the test function $h$ in \eqref{pert} we also associate the quantities
\begin{align}
\label{eh} e_h & =\frac{1}{2}\int_{S} h\, \Delta \log\Delta Q\, dA +  \frac{1}{8\pi}\int_{\partial S} \partial_n h(z)\, |dz|-\frac{1}{8\pi}\int_{\partial S} h(z) \frac{\partial_n \Delta Q(z)}{\Delta Q(z)}\, |dz|,
\\
\label{vh} v_h&= \frac{1}{4} \int_{S} |\nabla h(z)|^2\, dA(z).
\end{align}
Here and throughout, ``$\d_n$'' designates differentiation in the normal direction to $\d S$ pointing out from the droplet $S$
\end{enumerate}

\begin{rem} The terms $F_Q[C]$ in \eqref{Fq_ann} and \eqref{Fq_disc} have interpretations in terms of $\zeta$-regularized determinants for certain pseudodifferential operators
(Laplacians and Neumann's jump) via the Polyakov-Alvarez formula, see \cite{BKS,HZ,JV,Ka,Kl,W,ZW} and in particular \cite[Appendix C]{ZW}.

In the case when $S$ is connected, the terms $e_h$ and $v_h$ can be interpreted as the limiting expectation and variance of linear statistics, as proven in \cite{AHM}. In \cite[Theorems 1.4 and 6.1]{ACC} it is noted that the same interpretation holds in the case of disconnected droplets, provided that $h$ is the real part of an analytic function in each spectral gap.

In connection to the term in (II), we remark that the concept of entropy of a probability measure has a well-known interpretation in information theory, see e.g.~Khinchin's book \cite{K}.

\end{rem}

\begin{rem} The displacement term \eqref{Oscn} can be rewritten in terms of the
Jacobi theta function
\begin{align}\label{Jacobi theta}
\theta(z;\tau) = \sum_{\ell=-\infty}^{+\infty} e^{2 \pi i \ell z}e^{\pi i \ell^{2} \tau}.
\end{align}
To see this, note that
\begin{align*}
\Osc_n(s,\alpha) = \sum_{\nu=0}^{N-1} \Theta(x_{\nu};\rho_{\nu},\rho_{\nu}^{1-2x_{\nu}}\mu_{\nu}),
\end{align*}
where the function $\Theta(x;p,q)$ is given by
\begin{equation}\label{mjac}
\Theta=\Theta(x;p,q) := x(x-1)\log p+x\log q+ \log[(-qp^{2x};p^2)_\infty] + \log[(-q^{-1}p^{2(1-x)};p^2)_\infty],
\end{equation}
and by \cite[Lemma 3.25]{C}, $\Theta$ is related to the logarithm of \eqref{Jacobi theta} by
\begin{equation*}\begin{split}
\Theta= \frac{1}{2}\log \bigg( \frac{\pi q p^{-\frac{1}{2}}}{\log(p^{-1})} \bigg) &+ \frac{(\log q)^{2}}{4\log(p^{-1})} - \sum_{j=1}^{+\infty} \log(1-p^{2j})+ \log \theta \bigg( x +\frac 1 2+\frac 1 2   \frac{\log q}{\log p}\, ;\, \frac{\pi i}{\log(p^{-1})} \bigg).
\end{split}\end{equation*}

Previous works on spectral gaps have typically been formulated in terms of \eqref{Jacobi theta}, see e.g.~\cite{ACC,C,CFWW} and the references therein. However, in the present work, the form \eqref{Oscn} is advantageous, as it more directly relates to the Heine distribution (to be introduced below).

\end{rem}

\subsection{The regular case}
We begin by considering the case where $\alpha=0$ (no conical singularity) and $S=S^*$ (no shallow points).
In this case, it is convenient to write
\begin{align}\label{Fn} \Osc_n&(s):=\Osc_n(s,0).\end{align}

We have the following result.

\begin{thm} \label{regular_expansion}
Assume that $S=S^*$ and $\alpha=0$.
Then for all real $s$, $|s|\leq \log n$, we have, as $n\to\infty$,
	\begin{equation}\label{lin2}\begin{split}
\log Z_{n,sh} = -n^2 I_Q[\sigma] &+ \frac 1 2 n\log n + n\left[ \frac{\log (2\pi)}{2} -1 -\frac{E_{Q}[\sigma]}2  +s\int\limits_\C h\, d\sigma \right]
 + \frac {6-\chi(S)}{12} \log n\\
&	+\chi(S)\, \zeta'(-1)+ F_Q[\sigma]+\frac{\log(2\pi)}2 +se_h+\frac {s^2} 2 v_h+\Osc_n(s) + \calE(n).\\
\end{split}
	\end{equation}

The error term $\calE(n)$ is uniform in $s$ and satisfies $\calE(n)=\bigO(\frac{1+s^{2} }{n})$ if $a_0>0$, while
 $\calE(n)=\bigO\Big(\frac{(\log n)^3}{n^\frac{1}{12}}\Big)$ if
$a_0=0$.
\end{thm}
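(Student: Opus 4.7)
The radial symmetry of $Q$ and $h$ makes the monomials $z^k$ mutually orthogonal in $L^2(e^{-n\tilde Q}dA)$, so we start from the factorization
\begin{equation*}
Z_{n,sh} = n!\,\pi^{-n}\prod_{k=0}^{n-1} h_{k,n},\qquad h_{k,n}=2\pi\int_0^\infty r^{2k+1}e^{-n\tilde Q(r)}\,dr,
\end{equation*}
(with $\alpha=0$) and aim to expand $\log Z_{n,sh}=\log n!-n\log\pi+\sum_{k=0}^{n-1}\log h_{k,n}$. Stirling supplies the $\tfrac{1}{2}n\log n$ term and the universal constant $\tfrac12\log(2\pi)$; all other contributions must come from the Laplace analysis of $h_{k,n}$ and its summation.

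First I would perform a Laplace/steepest-descent analysis of $h_{k,n}$. Writing $\tau=k/n$, the phase $\tilde Q(r)-\tfrac{2k+1}{n}\log r$ has critical points determined by $r\tilde Q'(r)=2\tau+O(1/n)$, which via the radial equilibrium identity selects a saddle $r_\tau$ in whichever annulus $A(a_\nu,b_\nu)$ carries the mass level $\tau\in(M_{\nu-1},M_\nu)$. In this \emph{bulk regime} the saddle is unique and non-degenerate; a careful Laplace expansion to two subleading orders (using that $Q\in C^6$ in a neighbourhood of $S^\ast$) produces
\begin{equation*}
\log h_{k,n}=-n V_\tau +\tfrac12\log\bigl(2\pi/(n r_\tau^2\Delta Q(r_\tau))\bigr)+s\,h(r_\tau)+\tfrac{1}{n}R_\tau(Q,h)+O(n^{-2}),
\end{equation*}
where $R_\tau$ is the standard first Laplace correction involving $\partial_r\Delta Q/\Delta Q$ and $\nabla h$. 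Summing these expressions over bulk indices via the Euler--Maclaurin formula and matching the Riemann sum to the appropriate integrals over $[a_\nu,b_\nu]$ yields precisely $-n^2I_Q[\sigma]-\tfrac{n}{2}E_Q[\sigma]+F_Q[\sigma]+s\int h\,d\sigma+se_h+\tfrac{s^2}{2}v_h$ together with boundary contributions at $a_\nu,b_\nu$ of order $\log n$. The isolated endpoint-terms collapse into $-\tfrac{\chi(S)}{12}\log n+\chi(S)\zeta'(-1)$ after invoking the Barnes-$G$ identity $\log G(1+x)=x\zeta'(-1)+\cdots$; this is where the Polyakov--Alvarez geometric normalization of $F_Q[C]$ enters.

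The indices $k$ with $\tau$ near some critical mass $M_\nu$ (i.e.\ $k$ within $O(\log n)$ of $m_\nu$) must be treated separately. There the phase has two competing saddle points $r\approx b_\nu$ and $r\approx a_{\nu+1}$; splitting $h_{k,n}$ into two Laplace contributions and summing over the transition range $|k-m_\nu n|\lesssim \log n$ one recognizes, up to negligible tails, the partial sums
\begin{equation*}
\sum_{j\in\Z}\log\bigl(1+\mu_\nu^{\pm 1}\rho_\nu^{2j+1}\bigr),
\end{equation*}
which reassemble via the product representation \eqref{limpoch} into exactly the pair of $(z;\rho_\nu^2)_\infty$ factors defining $\Osc_n(s)$; the quadratic-in-$x_\nu$ piece $x_\nu\log\mu_\nu-x_\nu^2\log\rho_\nu$ arises from matching the transition sum to the bulk Riemann sum at the critical index. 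Here the $s$-dependence enters through $\mu_\nu=\theta_{\nu,0}e^{sc_\nu}$, where $c_\nu=h(a_{\nu+1})-h(b_\nu)$ is the value of $h$ at the outer/inner saddles of the gap.

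The \textbf{main obstacle} is the bookkeeping in the matching between bulk Euler--Maclaurin sums, edge Laplace remainders and transition sums so that no $O(1)$ constant is lost. Controlling the error uniformly for $|s|\le\log n$ requires that the Laplace remainders be differentiable in $s$ and of size $O(n^{-1}\log n)$ uniformly in $k$, which propagates to the stated $O(n^{-1}(\log n)^2)$ error when $a_0>0$. The case $a_0=0$ is genuinely harder: at $k=0$ the phase has its saddle colliding with the origin, the $\log r$ singularity makes the Laplace expansion degenerate, and only a Bessel-type scaling can resolve it. Handling a window of $\sim n^{1/12}$ such indices by a separate asymptotic (and then matching with the bulk at $r\to 0^+$, where $\Delta Q(r)$ and its derivatives are smooth) is the source of the weaker $O((\log n)^3 n^{-1/12})$ bound in that case, and this matching at the origin is the step requiring the most care.
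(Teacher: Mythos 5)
Your plan is essentially the paper's own route: Andr\'eief factorization, Laplace method for $h_{j}$ with a single saddle in the bulk, Euler--Maclaurin summation to assemble $-n^2 I_Q[\sigma]$, $-\tfrac{n}{2}E_Q[\sigma]$, $F_Q[\sigma]$, $se_h$, $\tfrac{s^2}{2}v_h$, plus two-saddle transition sums over the $O(\log n)$ indices near each $m_\nu$ that reconstitute the $(z;\rho_\nu^2)_\infty$ factors in $\Osc_n(s)$. Two small corrections to your description of the $a_0=0$ case: the scaling at the origin is not Bessel-type but an explicit Gamma-function asymptotic, $h_j \approx e^{-nq(0)+sh(0)}\Gamma(j+1)/(n\Delta Q(0))^{j+1}$ for $0\le j<D_n$, which after summation supplies the Barnes $G$ and hence the $\zeta'(-1)$ and the shift from $\tfrac12\log n$ to $\tfrac{5}{12}\log n$ that produces the $\chi(S)$ dependence; and the matching window is of size $D_n\sim n^{1/6}$, not $n^{1/12}$ --- the exponent $1/12$ only enters through the resulting error $D_n^{5/2}n^{-1/2}=n^{-1/12}$.
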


In the special case when $Q$ is %\textit{globally}
smooth and strictly subharmonic on the punctured plane $\C\setminus\{0\}$ (which implies that $S$ is connected) and $s=0$, Theorem \ref{regular_expansion} is precisely the union of  \cite[Theorem 1.1(i) and Theorem 1.2(i)]{BKS}. By contrast, if $N\ge 1$, then there are always points (in the gaps) where $Q$ is strictly superharmonic. (See for example the computations in Subsection \ref{laplace_me} below.)

\begin{rem} For $s=0$, our result \eqref{lin2} gives the optimal error term $\bigO(1/n)$ if $a_0>0$. %In case $a_0=0$ we can also slightly improve the error term if $s=0$, but this case is more subtle and we do not know the optimal order of the error term.
\end{rem}

\subsubsection{Fluctuations} A well-known principle dictates that fluctuations of smooth linear statistics are determined by the $\bigO(1)$-term of the large $n$ expansion of the free energy $\log Z_{n,sh}$. We shall exploit
this principle in a variety of situations, to deduce fluctuation theorems.

The key idea is the following. Given a suitable test function $h(z)$ and a random sample $\{z_j\}_1^n$ with respect to the unperturbed Gibbs distribution $\Prob_n$ in potential $Q$ (see \eqref{gibbs0} with $\alpha=s=0$)
we define the
random variable
\begin{align}\label{lins}
\fluct_n h &= \sum\limits_{j=1}^{n} h(z_j) -n\int\limits_{\C} h\, d\sigma.\end{align}

Given any random variable $X$, we write
$F_X(s)=\log\E(e^{sX})$ for its cumulant generating function (in short: CGF). We recall from basic probability theory that if the moment problem for $X$ is determinate, and if $F_{X_n}(s)\to F_X(s)$ uniformly in some neighbourhood of $s=0$ as $n\to\infty$, then $X_n$ converges in distribution to $X$. (See for instance \cite[Section 30]{Bill}.)

 The normal $N(\mu,\sigma^2)$-distribution is determined by the CGF $F_X(s)=s\mu+\frac {s^2} 2 \sigma^2$.

With this in mind, we pass to the CGF of \eqref{lins}, which we denote
\begin{equation}\label{CGF}F_{n,h}(s):=\log\E_n(e^{s\fluct_n h}).\end{equation}

By a standard computation (see e.g. \cite[Section 2.1]{ACC}) we have
the following formula for the derivative
\begin{equation}\label{diffint} F_{n,h}' (s) = \frac{d}{ds} \log Z_{n,sh} -n\int\limits_{\C} h\, d\sigma.\end{equation}

Integrating both sides from $0$ to $s$, using $F_{n,h}(0)=0$ and Theorem \ref{regular_expansion}, we obtain:

\begin{cor} \label{fluct_reg}
As $n\to\infty$, uniformly for $|s|\le \log n$
	\begin{align}\label{veh}
	F_{n,h}(s)& = se_h+\frac {s^2} 2 v_h+\Osc_n(s)-\Osc_n(0) + \calE(n),
\end{align}
where $\calE(n)$ satisfies the same estimates as in Theorem \ref{regular_expansion}.
\end{cor}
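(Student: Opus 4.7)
The proof is essentially a direct corollary of Theorem \ref{regular_expansion}; all the hard analytic work has already been done at the level of the free energy. The plan is to integrate the identity \eqref{diffint} and then subtract the two expansions for the partition function.

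First, I would observe that $F_{n,h}(0) = \log \E_n(1) = 0$, so integrating \eqref{diffint} from $0$ to $s$ yields
\begin{equation*}
F_{n,h}(s) \;=\; \log Z_{n,sh} \,-\, \log Z_{n,0} \,-\, s\,n \int_\C h\, d\sigma.
\end{equation*}
Next I would apply Theorem \ref{regular_expansion} to each of the two terms $\log Z_{n,sh}$ and $\log Z_{n,0}$ on the right. Every summand appearing in \eqref{lin2} that does not carry an explicit factor of $s$, namely $-n^2 I_Q[\sigma]$, $\tfrac{1}{2} n\log n$, the $s$-independent part of the coefficient of $n$, $\tfrac{6-\chi(S)}{12}\log n$, $\chi(S)\zeta'(-1)$, $F_Q[\sigma]$, and $\tfrac{1}{2}\log(2\pi)$, is intrinsic to the unperturbed potential $Q$ and therefore cancels in the difference $\log Z_{n,sh} - \log Z_{n,0}$. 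The one $s$-linear contribution that sits inside the bracket coefficient of $n$, namely $sn\int h\, d\sigma$, cancels exactly against the explicit term $-sn\int h\, d\sigma$ obtained from the integration. What survives is
\begin{equation*}
se_h + \tfrac{s^2}{2} v_h + \Osc_n(s) - \Osc_n(0),
\end{equation*}
which is precisely the content of \eqref{veh}.

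For the error, Theorem \ref{regular_expansion} furnishes its expansion uniformly for $|s|\le \log n$ with remainder $\calE(n)$, and the same bound applies at $s=0$. Hence the difference inherits an error of size $O(\calE(n))$, which we absorb into $\calE(n)$. The only small sanity check is that $e_h$ and $v_h$ defined in \eqref{eh}--\eqref{vh} are independent of $s$, and that the $s$-dependence of $\Osc_n(s)$ enters solely through the factor $e^{sc_\nu}$ in the definition \eqref{mudef} of $\mu_\nu$, so that $\Osc_n(s) - \Osc_n(0)$ correctly isolates the full $s$-dependent oscillatory contribution; both points are immediate from the definitions in items \ref{rmdef} and \ref{oscndef}.

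The real content of the corollary lies in Theorem \ref{regular_expansion}, whose proof is the true technical obstacle; once that expansion is in hand, there is no further analytic obstacle and what remains is the elementary bookkeeping above.
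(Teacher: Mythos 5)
Your proposal is correct and follows exactly the same route as the paper: integrate \eqref{diffint} from $0$ to $s$, use $F_{n,h}(0)=0$ to obtain $F_{n,h}(s)=\log Z_{n,sh}-\log Z_{n,0}-sn\int_\C h\,d\sigma$, then apply Theorem~\ref{regular_expansion} at both $s$ and $0$ so that the $s$-independent terms and the $sn\int h\,d\sigma$ contribution cancel, leaving \eqref{veh}. The paper states this same one-line argument; your bookkeeping of which terms cancel and your note that the two error terms combine into a single $\calE(n)$ are the right (and only) details to check.
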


Except for the succinct formulation and the order of the error term, Corollary \ref{fluct_reg} is not new.
Indeed, it alternatively follows by combining the recent boundary fluctuation theorem in \cite[Theorem 1.6]{ACC} with an earlier result for the bulk from \cite{AHM0}.
However, our present derivation using the partition function is new. The analysis in \cite{ACC} (which is of independent interest) is based around a local analysis of the one-point function (in potential $\tilde{Q}$) in various regimes of the plane.

\subsubsection{Probabilistic interpretation}
We shall now give an
interpretation of Corollary \ref{fluct_reg} in terms of a probability distribution on the non-negative integers $\Z_+=\{0,1,2,\ldots\}$, known as the \textit{Heine distribution} \cite{Ke}.

\begin{defn}
Let $\theta$ be a positive real parameter and $q$ a number with $0<q<1$.
A $\Z_+$-valued random variable $X$  is said to have a Heine distribution with parameters $(\theta,q)$, denoted $X\sim\He(\theta,q)$, if
\begin{equation}\label{pekka}
\Prob(\{X=k\})=\frac 1 {(-\theta;q)_\infty}\frac {q^{\frac 1 2 k(k-1)} \theta^k}{(q;q)_k},\qquad k\in\Z_+,
\end{equation}
where we follow the notation of \cite{GR} for $q$-shifted factorials,
\begin{align*}(z;q)_k=\prod_{i=0}^{k-1}(1-zq^i),\qquad (z;q)_\infty=\prod_{i=0}^\infty (1-zq^i).\end{align*}
\end{defn}

To see that the numbers in \eqref{pekka} sum to one, we use the following identity, which is a special case of the $q$-binomial Theorem  (see~\cite{AAR,GR,FK,MH})
	\begin{equation}\label{qbi}
	(z;q)_n = \sum\limits_{k=0}^{n} \frac{(q;q)_n}{(q;q)_k (q;q)_{n-k}} q^{\frac 12 k(k-1)}(-z)^k.
	\end{equation}
	
Taking the limit in \eqref{qbi} as $n\to\infty$ we get
	$$
	(z;q)_\infty = \sum\limits_{k=0}^{\infty} \frac{q^{\frac 12k(k-1)}(-z)^k}{(q;q)_k},
	$$
proving that the Heine distribution is indeed a probability distribution.

For later application, we note that by letting $z=-\theta \sqrt{q} e^{cs}$ in the last relation, we obtain	
\begin{equation}\label{euler_id}\sum_{k=0}^\infty\frac {q^{\frac 1 2 k^2}}{(q;q)_k} \theta^ke^{csk} =\prod_{j=0}^\infty (1+\theta e^{cs} q^{\frac 1 2 (2j+1)}).
\end{equation}

We are now ready to formulate a probabilistic interpretation of Corollary \ref{fluct_reg}.

\begin{cor}\label{fluct_reg2} For for each $\nu$, $0\le \nu\le N-1$, let $\{X_\nu^+,X_\nu^-\}$
be a set of independent, Heine distributed random variables with parameters
$$X_\nu^+\sim \He(\theta_\nu\rho_\nu,\rho_\nu^2),\qquad
X_\nu^{-}\sim \He(\theta_\nu^{-1}\rho_\nu,\rho_\nu^2),$$
where $\theta_\nu:=\theta_{\nu,0}$ is given in \eqref{rthdef}.

Also let $W$ be a Gaussian $N(e_h,v_h)$-distributed random variable independent from all $X_\nu^{\pm}$, i.e.,
$F_W(s)  =se_h+\frac {s^2} 2 v_h,$
and define $K_n=\sum_{\nu=0}^{N-1}c_\nu x_\nu$, where $c_\nu$ are defined in \eqref{rthdef}.

Then, as $n \to \infty$, uniformly for $|s| \leq \log n$,
$$F_{n,h}(s)=sK_n+F_W(s)+\sum_{\nu=0}^{N-1}(F_{c_\nu X_\nu^+}(s)+F_{-c_\nu X_\nu^-}(s))+\calE(n),$$
where $\calE(n)$ satisfies the same estimates as in Theorem \ref{regular_expansion}.
\end{cor}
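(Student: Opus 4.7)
The plan is to start from Corollary \ref{fluct_reg} and rewrite the increment $\Osc_n(s)-\Osc_n(0)$ explicitly as a sum of cumulant generating functions of the proposed Heine random variables, using the $q$-binomial identity \eqref{euler_id}.

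\textbf{Step 1: Compute the CGF of a Heine distribution.} For $X\sim\He(\theta,q)$ with $0<q<1$ and $\theta>0$, direct computation from \eqref{pekka} gives
\begin{equation*}
\mathbb{E}(e^{tX})=\frac{1}{(-\theta;q)_\infty}\sum_{k=0}^\infty \frac{q^{\frac12 k(k-1)}(\theta e^t)^k}{(q;q)_k}
=\frac{(-\theta e^t;q)_\infty}{(-\theta;q)_\infty},
\end{equation*}
where the last equality is precisely the $q$-binomial identity \eqref{euler_id} (with the substitution $\theta\mapsto\theta q^{-1/2}$, $cs\mapsto t$). Consequently
\begin{equation*}
F_X(t)=\log(-\theta e^t;q)_\infty-\log(-\theta;q)_\infty.
\end{equation*}
Specializing to $X_\nu^\pm$, and observing that $F_{c_\nu X}(s)=F_X(c_\nu s)$ and $F_{-c_\nu X}(s)=F_X(-c_\nu s)$, this step yields explicit closed forms for $F_{c_\nu X_\nu^+}(s)$ and $F_{-c_\nu X_\nu^-}(s)$ as differences of logarithms of $q$-shifted factorials.

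\textbf{Step 2: Unfold $\Osc_n(s)-\Osc_n(0)$.} Since $\alpha=0$, we have $\mu_\nu=\theta_\nu e^{sc_\nu}$, so $\log\mu_\nu=\log\theta_\nu+sc_\nu$ and $\mu_\nu$ at $s=0$ equals $\theta_\nu$. Using \eqref{Oscn}, the $x_\nu^2\log\rho_\nu$ terms cancel in the difference $\Osc_n(s)-\Osc_n(0)$, and I obtain
\begin{equation*}
\Osc_n(s)-\Osc_n(0)=s\sum_{\nu=0}^{N-1}c_\nu x_\nu+\sum_{\nu=0}^{N-1}\Bigl[\log\tfrac{(-\rho_\nu\theta_\nu e^{sc_\nu};\rho_\nu^2)_\infty}{(-\rho_\nu\theta_\nu;\rho_\nu^2)_\infty}+\log\tfrac{(-\rho_\nu\theta_\nu^{-1}e^{-sc_\nu};\rho_\nu^2)_\infty}{(-\rho_\nu\theta_\nu^{-1};\rho_\nu^2)_\infty}\Bigr].
\end{equation*}
Comparing with Step 1 (with $q=\rho_\nu^2$ and parameters $\theta_\nu\rho_\nu$ and $\theta_\nu^{-1}\rho_\nu$ respectively), the bracketed sum matches exactly $\sum_\nu[F_{c_\nu X_\nu^+}(s)+F_{-c_\nu X_\nu^-}(s)]$, while the prefactor is $sK_n$ by definition of $K_n$.

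\textbf{Step 3: Assemble.} Corollary \ref{fluct_reg} gives $F_{n,h}(s)=se_h+\tfrac{s^2}{2}v_h+\Osc_n(s)-\Osc_n(0)+\calE(n)$. Substituting the identification from Step 2 and recalling $F_W(s)=se_h+\tfrac{s^2}{2}v_h$ yields the claimed decomposition. Independence of the $X_\nu^\pm$ and $W$ is a matter of definition (they are introduced as an independent family), so no probabilistic argument is needed beyond the CGF matching.

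The only non-routine step is Step 1, where one must recognize the sum in the Heine normalizing constant as an instance of \eqref{euler_id}; the rest is a matter of bookkeeping the matching of $q$-shifted factorial parameters. The error term $\calE(n)$ is inherited unchanged from Corollary \ref{fluct_reg}.
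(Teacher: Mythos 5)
Your proof is correct and follows essentially the same path as the paper's: both derive the CGF of a Heine-distributed variable from the $q$-binomial identity \eqref{euler_id}, unfold $\Osc_n(s)-\Osc_n(0)$ using $\mu_\nu(s)=\theta_\nu e^{sc_\nu}$, and match parameters to identify the bracketed terms as $F_{c_\nu X_\nu^+}(s)$ and $F_{-c_\nu X_\nu^-}(s)$. The only cosmetic difference is that you parameterize the Heine CGF as $\He(\theta,q)$ rather than the paper's $\He(\theta\sqrt{q},q)$ in the intermediate formula \eqref{mgfc}, which is just a rescaling.
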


\begin{proof} Let $X\sim\He(\theta\sqrt{q},q)$ and $c$ a real constant. By \eqref{euler_id}
the moment generating function of $cX$ is
\begin{equation}\label{mgfc}\mathbb{E}(e^{csX})=\frac {(-\theta\sqrt{q}e^{cs};q)_\infty} {(-\theta\sqrt{q};q)_\infty}.
\end{equation}
Now define
$\mu_\nu(s):=\mu_\nu(s,0;n,h)$ (see \eqref{mudef}).
Then
\begin{align}
\Osc_n(s)-\Osc_n(0)=\sum_{\nu=0}^{N-1}\Big[x_\nu&sc_\nu+ \log \frac{(-\rho_\nu\mu_\nu(s); \rho_\nu^2)_\infty}{(-\rho_\nu\mu_\nu(0); \rho_\nu^2)_\infty}  + \log \frac{(-\rho_\nu/\mu_\nu(s); \rho_\nu^2)_\infty}{(-\rho_\nu/\mu_\nu(0); \rho_\nu^2)_\infty} \Big]. \label{lol1}
\end{align}
On the other hand, by comparing with \eqref{mgfc} (with $q=\rho_\nu^2$) and recalling that $\mu_\nu(s)=\mu_\nu(0)e^{sc_\nu}$, we infer that if $X_\nu^+\sim\He(\theta_\nu\rho_\nu,\rho_\nu^2)$, then
$$F_{c_\nu X_\nu^+}(s):=\log\mathbb{E}(e^{sc_\nu X_\nu^+})=\log \frac{(-\rho_\nu\mu_\nu(s); \rho_\nu^2)_\infty}{(-\rho_\nu\mu_\nu(0); \rho_\nu^2)_\infty}.$$
Similarly, if $X_\nu^-\sim\He(\theta_\nu^{-1}\rho_\nu,\rho_\nu^2)$, then
$$F_{-c_\nu X_\nu^-}(s)= \log \frac{(-\rho_\nu/\mu_\nu(s); \rho_\nu^2)_\infty}{(-\rho_\nu/\mu_\nu(0); \rho_\nu^2)_\infty}.
$$
Substituting the above in \eqref{lol1} and recalling \eqref{veh}, we finish the proof of the corollary.
\end{proof}

\begin{rem} The random variables $W=W(h)$ in Corollary \ref{fluct_reg2} form a Gaussian field in $\C$ generalizing the one appearing for connected droplets in \cite{AHM}.
In the present case, we need to subtract the sum $K_n+\sum_0^{N-1}c_\nu(X_\nu^+-X_\nu^-)$ from $\fluct_n h$ in order to obtain convergence to the Gaussian field.

It is natural to think of $X_\nu^+$ as a displacement of particles from $S^{\nu}$ to $S^{\nu+1}$ and $X_{\nu}^{-}$ as an independent displacement from $S^{\nu+1}$ to $S^\nu$. Note that the constant $K_n$ as well as the distributions of the displacement terms $X_\nu^{\pm}$ depend on $n$ through $x_\nu=\{M_\nu n\}$.

Recall from \cite{Ke2,ACC} that an integer valued random variable $Y$ is said to have a \textit{discrete normal distribution} with parameters $\lambda>0$ and $0<q<1$ (denoted  ``$Y\sim dN(\lambda, q)$'') if it has the probability function
$$
\mathbb{P}(\{Y=k\}) = C_{\lambda,q}\lambda^k q^{\frac 12k(k-1)},\qquad (k\in\Z),
$$
where $C_{\lambda,q}$ is the normalizing constant.
In this notation, it follows from \cite[Theorem 2]{Ke2} that
$$X_\nu^+ - X_\nu^- \sim dN(\theta_{\nu} \rho_{\nu}, \rho_{\nu}^2).$$
\end{rem}

\subsection{Fisher-Hartwig singularity} \label{fssub} We now consider the partition function $Z_{n,sh}^{(\alpha)}$ where $\alpha>-1$ is the parameter in \eqref{pert}. (Physically, this corresponds to insertion of a point-charge of ``strength'' $\alpha$, see e.g.~\cite{AHM0}. In Hermitian random matrix theory, such singularities are usually referred to as root-type Fisher-Hartwig singularities, see e.g. \cite{CFWW} and the references therein.)

We maintain the assumption that the droplet takes the form \eqref{droplet}, and that there are no shallow points, i.e., $S=S^*$. We shall focus on the case of a central disk droplet,
i.e., we assume that $a_0=0$. (Annular droplets corresponding to $a_{0}>0$
are uninteresting in this connection, since the logarithmic term $\ell(z)=2\log|z|$ is then smooth in a neighbourhood of the droplet.)

Recall that $e_{h}$ and $v_{h}$ are defined by \eqref{eh} and \eqref{vh}, respectively.
Given a suitable function $f$, we also define
\begin{align}
v_{\nu,f} = \frac{1}{4} \int_{S^{\nu}} |\nabla f(z)|^2\, dA(z),
\end{align}
where $S^\nu=A(a_\nu,b_\nu)$ is the $\nu$:th connected component of $S$.

We shall frequently use the linear combination
\begin{equation}\label{kdef}
k(z)=k_{s,\alpha}(z):=sh(z)+\alpha\ell(z).
\end{equation}

For the following result, we note that the integral
$$v_{\nu,k}=\frac{1}{4}\int_{S^{\nu}}|\nabla k|^2\, dA
=\frac {s^2} 4 \int_{S^{\nu}} |\nabla h|^2\, dA+\frac {s \alpha} 2\int_{S^\nu}\nabla h\bigcdot \nabla \ell\, dA+\frac {\alpha^2} 4\int_{S^\nu}|\nabla\ell|^2\, dA$$
converges when $\nu\ge 1$ but diverges when $\nu=0$ (unless $\alpha=0$). (Here $x\bigcdot y=x_1y_1+x_2y_2$ denotes the usual dot product in $\R^2$.)

\begin{thm}\label{conical_expansion} Suppose that $S=S^*$ is a central disk droplet, i.e., $a_0=0$, and that $\alpha>-1$. Then for all real $s$  with $|s|\leq \log n$, we have, as $n\to\infty$,
\begin{align*}
\log Z_{n,sh}^{(\alpha)}  =& - n^2 I_Q[\sigma] + \frac{1}{2}n \log n  +  n \left[\frac{\log 2\pi}{2} -1 -\frac{E_{Q}[\sigma]}2  + \int_\C k \, d\sigma \right]   +\Big(\frac{5}{12}+\frac{\alpha^{2}}{2}\Big)\log n \nonumber\\
& +  \zeta'(-1) - \log G(1+\alpha) + F_Q[\sigma] + \frac{1+\alpha}{2}\log(2\pi)  + \, e_k -\frac{\alpha}{2}
+ \frac{s^2}{2}v_{0,h} + \frac{1}{2}\sum\limits_{\nu=1}^N v_{\nu,k}  \nonumber \\
 & + \frac{\alpha^{2}}{2} \log \big( b_{0}^{2}\Delta Q(0)  \big)    + \Osc_{n}(s,\alpha) +\alpha s \big( h(b_{0})-h(0) \big)  + \bigO\Big(\frac{(\log n)^3}{n^\frac{1}{12}}\Big),
\end{align*}
where $G(z)$ and $\Osc_n(s,\alpha)$ are defined in Section \ref{cge}, items \ref{bgdef} and \ref{oscndef}, respectively.
\end{thm}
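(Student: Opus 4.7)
The approach mirrors the $\alpha=0$ case of Theorem \ref{regular_expansion}: radial symmetry of the weight $|z|^{2\alpha} e^{-nQ(z)+sh(z)}$ makes the monomials $\{z^k\}_{k=0}^{n-1}$ pairwise orthogonal, so Heine's identity gives
\begin{equation*}
\log Z_{n,sh}^{(\alpha)} = \log(n!) + \sum_{k=0}^{n-1} \log h_{k,n}, \qquad h_{k,n} = 2\int_{0}^{\infty} r^{2k+2\alpha+1}\, e^{-nQ(r)+sh(r)}\, dr,
\end{equation*}
and the task reduces to an asymptotic evaluation of $\log h_{k,n}$, uniform in $k$, followed by careful summation.

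For each $k$ one applies Laplace's method at the saddle point $r_k$ solving $rQ'(r)=\tfrac{2(k+\alpha)}{n}$, splitting the index range into four regimes exactly as for Theorem \ref{regular_expansion}. When $r_k$ lies in the bulk of a component $S_\nu$, Gaussian Laplace produces the standard expansion; summing reproduces $-n^2 I_Q[\sigma]$, $\tfrac{n}{2}\log n$, the entropy term, the $\chi(S)$ contribution, and $F_Q[\sigma]$, with every appearance of $h$ in the regular case replaced by $k_{s,\alpha}=sh+\alpha\ell$. This accounts for $n\int k\, d\sigma$, $e_k$, and $\tfrac12 \sum_{\nu\ge 1} v_{\nu,k}$. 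The boundary regime near any $a_\nu$ or $b_\nu$ is identical to the regular case. The gap regime $r_k\in(b_\nu,a_{\nu+1})$ gives an integral with two competing maxima at the gap endpoints; summing the resulting geometric-type series over $k$ in the gap range assembles the $q$-Pochhammer factors, and the $\alpha$-shift inside $r_k$ is precisely what forces the replacement $x_\nu\mapsto x_\nu-\alpha$ in \eqref{rthdef}. This yields $\Osc_n(s,\alpha)$.

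The genuinely new regime is small $k$ with $r_k$ near the origin, where the prefactor $r^{2k+2\alpha+1}$ forces $r_k \sim \sqrt{(k+\alpha)/(n\Delta Q(0))} \to 0$ and classical Laplace breaks down. Rescaling $r = \rho/\sqrt{n\Delta Q(0)}$ and using $Q(r) = Q(0) + \Delta Q(0)\, r^2 + O(r^4)$ gives
\begin{equation*}
h_{k,n} = e^{-nQ(0)+sh(0)}\, \frac{\Gamma(k+\alpha+1)}{(n\Delta Q(0))^{k+\alpha+1}}\,\bigl(1 + O(n^{-1})\bigr)
\end{equation*}
uniformly for $0\le k\le K_n$ in an appropriate matching window. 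Summing $\log\Gamma(k+\alpha+1)$ via the Barnes identity $\sum_{k=0}^{m-1}\log\Gamma(k+\alpha+1) = \log G(m+1+\alpha)-\log G(1+\alpha)$ together with the large-argument asymptotics of $\log G$ produces the constants $-\log G(1+\alpha)$, $\tfrac{\alpha^2}{2}\log n$, $\tfrac{\alpha^2}{2}\log(b_0^2\Delta Q(0))$, and the linear cross term $\alpha s(h(b_0)-h(0))$. This also explains why $v_{0,h}$, rather than $v_{0,k}$, appears in the final formula: the $\alpha\ell$ component of $v_{0,k}$ would diverge at the origin, and this divergence is precisely what is absorbed into the discrete Gamma sum.

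The main technical obstacle is this matched asymptotic analysis at the origin, where the small-$k$ Gamma regime, an intermediate transition window, and the bulk-$k$ Laplace regime must be glued so that overlap contributions cancel and the quoted $O(1)$ constants emerge. Controlling the matching uniformly in $|s|\le \log n$, and keeping track of the cross terms between $h$ and $|z|^{2\alpha}$ (which is what eventually yields the linear $\alpha s(h(b_0)-h(0))$ piece), is the most delicate part. The $\tfrac{(\log n)^3}{n^{1/12}}$ error then propagates from the central-disc bound in Theorem \ref{regular_expansion} together with the Gamma/Barnes matching at the origin.
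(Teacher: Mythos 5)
Your overall strategy coincides with the paper's: write $\log Z_n^{(\alpha)}$ as $\log(n!)+\sum_{j=0}^{n-1}\log h_j$, split the index range at $D_n=\lceil n^{1/6}\rceil$, evaluate the small-$j$ block via the exact Gamma-function formula and the Barnes $G$ identity (Lemmas \ref{low_sum1}--\ref{low_sum}), evaluate the remaining block by the Laplace method with corrections for indices near the critical $m_\nu$ (Lemmas \ref{bkshigh}--\ref{mult_term} and the $T_\nu$ correction), and then match the $D_n$-dependent pieces between the two blocks. This is exactly the route the paper takes, building on the connected-disc analysis of \cite{BKS}.

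Two attributions in your sketch are off, however. First, the cross term $\alpha s\,(h(b_0)-h(0))$ does \emph{not} come from the Gamma/Barnes sum at the origin; the origin block only produces the $h$-dependent term $D_n\,s h(0)$. The cross term arises in the bulk block, from the $a_{0,j}/n$ Laplace correction: in \eqref{transform} the term $\frac14\int_{r_n}^{b_0} r\cdot 4\alpha s\,\frac{h'(r)}{r}\,dr=\alpha s\,(h(b_0)-h(r_n))$ produces it (see the proof of Lemma \ref{mult_term}). Second, $\frac{\alpha^2}{2}\log(b_0^2\Delta Q(0))$ likewise emerges from the bulk block (Lemma \ref{high_sum}), since $b_0$ cannot enter through the origin expansion; the Barnes sum only carries $\Delta Q(0)$ through the factor $(n\Delta Q(0))^{-(j+1+\alpha)}$. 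Also, the paper keeps the saddle point equation as $rq'(r)=2j/n$ (with $r^{2\alpha}$ kept in the prefactor), rather than absorbing $\alpha$ into the effective index as in your $rQ'(r)=2(k+\alpha)/n$; this is a legitimate reparametrization, but the two choices generate differently organized correction terms, so you cannot directly transplant the paper's Lemmas \ref{main_term}--\ref{mult_term} without reworking the $d_2,d_3$ and prefactor contributions accordingly. None of these affect the soundness of the method, but the term bookkeeping is genuinely sensitive to these choices, so they would need to be fixed before the constants in the statement come out correctly.
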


\begin{rem} In dimension one,
large $n$ expansions in connection with Fisher-Hartwig singularities have been well studied, see e.g.~\cite{CG FH 2021, CFWW} and the references there. In dimension two, various aspects of root-type Fisher-Hartwig singularities and conical singularities are studied in e.g.~\cite{WW,Kl,AKS2, DS, BC FH 2022} and the references therein.
\end{rem}

We shall now use the relationship between the partition function and the cumulant generating function and draw two immediate conclusions.

First, note that using \eqref{diffint} with $\E_{n}$ replaced by $\E_{n}^{(\alpha)}$ and Theorem \ref{conical_expansion}, we obtain the following result. (The notation $\E_{n}^{(\alpha)}$ means that the expectation is taken with respect to \eqref{gibbs0} with $s=0$.)

\begin{cor}\label{cor1} For fixed $\alpha>-1$, the cumulant generating function $$F_{n,h}^{(\alpha)}(s):=\log \E_n^{(\alpha)} \exp(s\fluct_n h)$$
	 satisfies the large $n$ expansion
	$$
	F_{n,h}^{(\alpha)}(s) = se_{h,\alpha}  + \frac{s^2}{2} v_{h}
+ \Osc_n(s,\alpha)-\Osc_n(0,\alpha)  +\bigO\Big(\frac{(\log n)^3}{n^\frac{1}{12}}\Big),
	$$
where $e_{h,\alpha}:=e_h+\frac \alpha 2\sum_{\nu=1}^N
\int_{S^\nu}\nabla h\bigcdot \nabla \ell\, dA+\alpha(h(b_0)-h(0))$.
\end{cor}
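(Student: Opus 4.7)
The plan is to mimic the derivation of Corollary \ref{fluct_reg} from Theorem \ref{regular_expansion}, now working from Theorem \ref{conical_expansion}. First I would establish the $\alpha$-version of \eqref{diffint}: differentiating $\log Z_{n,sh}^{(\alpha)}$ in $s$ produces the expectation of $\sum_j h(z_j)$ under the Gibbs law with potential $\tilde{Q}=Q-(s/n)h-(\alpha/n)\ell$, and the usual manipulation (as in the derivation of \eqref{diffint}) then yields the identity
\[
F_{n,h}^{(\alpha)}(s) \;=\; \log Z_{n,sh}^{(\alpha)} - \log Z_{n,0h}^{(\alpha)} - n s \int_{\C} h\, d\sigma.
\]
Substituting the expansion of Theorem \ref{conical_expansion} into each of the two partition functions and taking the difference is then the only remaining computational step.

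The majority of the terms in the expansion are $s$-independent and cancel outright: $-n^2 I_Q[\sigma]$, the $\frac 12 n\log n$ factor, the $(\frac 5{12}+\frac{\alpha^2}{2})\log n$ factor, and the constants $\zeta'(-1)$, $-\log G(1+\alpha)$, $F_Q[\sigma]$, $\frac{1+\alpha}{2}\log(2\pi)$, and $\frac{\alpha^2}{2}\log(b_0^2\Delta Q(0))$. To organize what survives, I would write $k=k_{s,\alpha}=sh+\alpha\ell$ and exploit linearity of $e_{(\cdot)}$ in its argument to split $e_{k_{s,\alpha}}-e_{k_{0,\alpha}}=s\,e_h$. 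The linear piece $n\int k\,d\sigma$ contributes $ns\int h\,d\sigma$, which cancels against the $-ns\int h\,d\sigma$ in the identity above. For the Dirichlet terms one expands $|\nabla k|^2 = s^2|\nabla h|^2 + 2s\alpha\,\nabla h\bigcdot\nabla\ell + \alpha^2|\nabla\ell|^2$, so the $s$-dependent part of $\frac{s^2}{2}v_{0,h}+\frac12\sum_{\nu=1}^N v_{\nu,k}$ assembles into $\frac{s^2}{2}v_h$ together with a cross term proportional to $s\alpha\sum_{\nu\ge 1}\int_{S_\nu}\nabla h\bigcdot\nabla\ell\,dA$. Finally, the explicit boundary term $\alpha s(h(b_0)-h(0))$ and the oscillatory difference $\Osc_n(s,\alpha)-\Osc_n(0,\alpha)$ survive verbatim. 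Collecting these linear-in-$s$ contributions and matching them against the definition of $e_{h,\alpha}$ produces the stated formula, with the error $\bigO((\log n)^3/n^{1/12})$ inherited directly from Theorem \ref{conical_expansion}.

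There is no genuine technical obstacle beyond bookkeeping, and no new asymptotic input is required. The only point deserving a brief comment is that $v_{0,k}$ itself is absent from Theorem \ref{conical_expansion}, because $\ell$ is singular at the origin of the central disk component $S_0$; the divergent $\alpha^2|\nabla\ell|^2$ contribution there has been absorbed into $s$-independent constants of the partition function and therefore drops out of the CGF automatically. Consequently, it is the $s^2|\nabla h|^2$ contributions from the outer annuli $S_\nu$, $\nu\ge 1$, that combine cleanly with $\frac{s^2}{2}v_{0,h}$ from the central disk to reproduce the full Dirichlet energy $v_h = \frac 14\int_S|\nabla h|^2\,dA$ on the right-hand side.
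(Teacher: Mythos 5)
Your derivation is exactly the paper's: Corollary \ref{cor1} is obtained by applying the $\alpha$-analogue of the integrated identity \eqref{diffint} together with Theorem \ref{conical_expansion}, and subtracting the free-energy expansion at $s$ from the one at $s=0$. The bookkeeping you describe --- linearity of $e_{(\cdot)}$ giving $e_{k}-e_{\alpha\ell}=se_h$, the bilinear split $|\nabla k|^2=s^2|\nabla h|^2+2s\alpha\,\nabla h\bigcdot\nabla\ell+\alpha^2|\nabla\ell|^2$ yielding $\frac{s^2}{2}v_h$ plus a cross term, and the $s$-independent terms (including the divergent $\alpha^2 v_{0,\ell}$ piece, which never appears explicitly) dropping out --- is precisely the paper's reasoning and requires no further asymptotic input.
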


Using \eqref{diffint} with $h(z)$ replaced by $\ell(z)=2\log|z|$ and using Theorem \ref{conical_expansion}, we also obtain the following result.

\begin{cor} \label{cor2}
Let, for $\alpha>-1$,
$$F_{n,\ell}(\alpha):=\log \E_n \exp(\alpha\fluct_n\ell),$$
be the cumulant generating function of $\fluct_n\ell$, where the expectation is with respect to
the Gibbs measure \eqref{gibbs0} for $s=\alpha=0$.	
Then as $n\to\infty$,
\begin{align*}F_{n,\ell}(\alpha)=\frac{\alpha^2} 2\log n+\alpha\tilde{e}_\ell+\frac {\alpha^2} 2\tilde{v}_\ell
-\log G(1+\alpha)+ \Osc_n(0,\alpha)-\Osc_n(0,0)+ \bigO\Big(\frac{(\log n)^3}{n^\frac{1}{12}}\Big),
\end{align*}
where
$$\tilde{e}_\ell:=e_\ell+\frac {\log (2\pi)} 2
,\qquad \tilde{v}_\ell:=\log(b_0^2\Delta Q(0))+\sum_{\nu=1}^N v_{\nu,\ell}.$$
\end{cor}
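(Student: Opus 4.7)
The plan is to obtain this corollary as an essentially algebraic consequence of Theorem \ref{conical_expansion}. I start from the identity
\begin{equation*}
F_{n,\ell}(\alpha) \;=\; \log Z_{n,0}^{(\alpha)} \;-\; \log Z_n \;-\; \alpha n \int_\C \ell \, d\sigma,
\end{equation*}
obtained either by integrating \eqref{diffint} with $h$ replaced by $\ell$ from $0$ to $\alpha$ (using $F_{n,\ell}(0)=0$), or directly by unfolding $\E_n e^{\alpha \fluct_n \ell}$ and recognizing that the factor $e^{\alpha\sum_j\ell(z_j)}|\Delta_n|^2 e^{-n\sum Q(z_j)}$ is exactly the integrand defining $Z_{n,0}^{(\alpha)}$, i.e.\ the partition function in potential \eqref{pert} with $s=0$.

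Next I apply Theorem \ref{conical_expansion} at $s=0$ separately to $\log Z_{n,0}^{(\alpha)}$ and to $\log Z_n$ (the latter further specialized to $\alpha=0$), and subtract. Every $\alpha$-independent contribution---the energy $-n^2 I_Q[\sigma]$, the subleading $\tfrac{1}{2}n\log n$, the $n$-linear bracket $\tfrac{1}{2}\log(2\pi)-1-\tfrac{1}{2}E_Q[\sigma]$, $\tfrac{5}{12}\log n$, $\zeta'(-1)$, $F_Q[\sigma]$, and $\tfrac{1}{2}\log(2\pi)$---cancels identically, while the surviving $n$-linear residue $\alpha n\int\ell\,d\sigma$ is killed by the explicit correction in the identity above. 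The terms $\tfrac{s^2}{2}v_{0,h}$ and $\alpha s(h(b_0)-h(0))$ vanish because $s=0$, and $\log G(1)=0$. What remains is the $\alpha$-dependent part
\begin{equation*}
\tfrac{\alpha^2}{2}\log n \;-\; \log G(1+\alpha) \;+\; \tfrac{\alpha}{2}\log(2\pi) \;+\; e_{\alpha\ell} \;+\; \tfrac{1}{2}\sum_{\nu=1}^N v_{\nu,\alpha\ell} \;+\; \tfrac{\alpha^2}{2}\log\!\bigl(b_0^2\Delta Q(0)\bigr) \;+\; \Osc_n(0,\alpha)-\Osc_n(0,0),
\end{equation*}
up to the common error $\bigO((\log n)^3/n^{1/12})$ inherited from Theorem \ref{conical_expansion}.

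The final step is a regrouping using the linearity $e_{\alpha\ell}=\alpha e_\ell$ and the quadratic homogeneity $v_{\nu,\alpha\ell}=\alpha^2 v_{\nu,\ell}$ of the respective functionals. The coefficient of $\alpha$ then reads $e_\ell+\tfrac{1}{2}\log(2\pi)=\tilde{e}_\ell$, while the coefficient of $\tfrac{\alpha^2}{2}$ in the non-$\Osc$, non-$\log G$ block reads $\sum_{\nu=1}^N v_{\nu,\ell}+\log(b_0^2\Delta Q(0))=\tilde{v}_\ell$, giving the asserted expansion. No genuine analytical obstacle appears: the only point requiring a moment's care is to confirm that no $v_{0,\ell}$ contribution enters, which is consistent with Theorem \ref{conical_expansion} precisely because $|\nabla\ell|^2$ is non-integrable at the origin---its divergence is absorbed into the explicit $\tfrac{\alpha^2}{2}\log(b_0^2\Delta Q(0))$ term reflecting the conical Fisher--Hartwig singularity.
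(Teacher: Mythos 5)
Your proposal is correct and is essentially identical to the paper's proof: the paper derives Corollary \ref{cor2} by the one-line remark ``Using \eqref{diffint} with $h$ replaced by $\ell$ and using Theorem \ref{conical_expansion},'' which is exactly your identity $F_{n,\ell}(\alpha)=\log Z_{n,0}^{(\alpha)}-\log Z_n-\alpha n\int\ell\,d\sigma$ followed by subtraction of the two expansions at $s=0$. Your bookkeeping (cancellation of $\alpha$-independent terms, use of linearity of $e_h$, quadratic homogeneity of $v_{\nu,f}$, vanishing of $v_{0,h}$ and $\alpha s(h(b_0)-h(0))$ when $s=0$, and $\log G(1)=0$) matches the intended argument, and your closing observation about $v_{0,\ell}$ being absorbed into $\tfrac{\alpha^2}{2}\log(b_0^2\Delta Q(0))$ is a correct reading of the theorem's structure.
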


\begin{rem} Note that Corollary \ref{cor2} implies that the random variables \begin{equation}\label{logf}
\frac {\fluct_n\ell}  {\sqrt{\log n}}
\end{equation}
converge in distribution to a standard normal as $n\to\infty$. This kind of convergence is known to be ``universal'' for all suitable potentials giving rise to a
connected droplet whose interior contains the origin, see
\cite{AKS2,WW}.
More remarkably, we here obtain the subleading correction:
$$\lim_{n\to\infty} (F_{n,\ell}(\alpha)-\frac {\alpha^2} 2 \log n-\Osc_n(0,\alpha)+\Osc_n(0,0))=\alpha\tilde{e}_\ell+\frac {\alpha^2} 2 \tilde{v}_\ell-\log G(1+\alpha).$$

\end{rem}

We now interpret Corollary \ref{cor1} in terms of the Heine distribution. The following result is a direct generalization of Corollary \ref{fluct_reg2} for central disk droplets.

\begin{thm} Keep the notation of Corollary \ref{cor1} and
let $\{X_\nu^+,X_\nu^-\}$ be independent and Heine-distributed with $X_\nu^{\pm}\sim\He(\theta_{\nu,\alpha}^{\pm 1}\rho_\nu,\rho_\nu^2)$ where $\theta_{\nu,\alpha}$ is given in \eqref{rthdef}. Also let $W$ be a Gaussian with expectation $e_{h,\alpha}$ and variance $v_h$ and put
$K_n=\sum_{\nu=0}^{N-1}c_\nu\,x_\nu$ (again cf.~ \eqref{rthdef}). Then as $n\to\infty$,
$$F_{n,h}^{(\alpha)}(s)=sK_n+F_W(s)+\sum_{\nu=0}^{N-1}(F_{c_\nu X_\nu^+}(s)+F_{-c_\nu X_\nu^-}(s))+\bigO\Big(\frac{(\log n)^3}{n^\frac{1}{12}}\Big).
$$
\end{thm}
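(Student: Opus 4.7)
The plan is to follow the same strategy as in the proof of Corollary \ref{fluct_reg2}: start from the cumulant generating function expansion in Corollary \ref{cor1}, isolate the Gaussian contribution, and match the remaining $q$-Pochhammer factors in $\Osc_n(s,\alpha)-\Osc_n(0,\alpha)$ to moment generating functions of Heine variables via the identity \eqref{mgfc}. The only new feature compared to Corollary \ref{fluct_reg2} is that the parameter $\theta_\nu=\theta_{\nu,0}$ is replaced by $\theta_{\nu,\alpha}$ (which carries the Fisher--Hartwig factor $\rho_\nu^{-2\alpha}$), and the Gaussian mean shifts from $e_h$ to $e_{h,\alpha}$; the scaling $c_\nu = h(a_{\nu+1})-h(b_\nu)$ is unchanged because the logarithm $\ell(z)$ contributes nothing to the jumps of $h$ between consecutive components of $S$ that enter the definition of $c_\nu$.

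First, I would substitute the definition \eqref{Oscn} and use that $\mu_\nu(s,\alpha;n,h)=\theta_{\nu,\alpha}e^{sc_\nu}$, so that $\log\mu_\nu(s,\alpha)-\log\mu_\nu(0,\alpha)=sc_\nu$ and $\log\rho_\nu$ is $s$-independent. This gives
\begin{align*}
\Osc_n(s,\alpha)-\Osc_n(0,\alpha)
= sK_n
&+ \sum_{\nu=0}^{N-1}\log\frac{(-\theta_{\nu,\alpha}\rho_\nu e^{sc_\nu};\rho_\nu^2)_\infty}{(-\theta_{\nu,\alpha}\rho_\nu;\rho_\nu^2)_\infty}\\
&+ \sum_{\nu=0}^{N-1}\log\frac{(-\theta_{\nu,\alpha}^{-1}\rho_\nu e^{-sc_\nu};\rho_\nu^2)_\infty}{(-\theta_{\nu,\alpha}^{-1}\rho_\nu;\rho_\nu^2)_\infty},
\end{align*}
the $x_\nu^2\log\rho_\nu$ pieces cancelling between $s$ and $0$.

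Second, I would apply \eqref{mgfc} with $q=\rho_\nu^2$ (so $\sqrt{q}=\rho_\nu$) and the appropriate choice of $\theta$. For $X_\nu^+\sim\He(\theta_{\nu,\alpha}\rho_\nu,\rho_\nu^2)$, this yields
\[
F_{c_\nu X_\nu^+}(s)=\log\frac{(-\theta_{\nu,\alpha}\rho_\nu e^{sc_\nu};\rho_\nu^2)_\infty}{(-\theta_{\nu,\alpha}\rho_\nu;\rho_\nu^2)_\infty},
\]
and similarly for $X_\nu^-\sim\He(\theta_{\nu,\alpha}^{-1}\rho_\nu,\rho_\nu^2)$ after replacing $s$ by $-s$ to account for the coefficient $-c_\nu$. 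Together with $F_W(s)=se_{h,\alpha}+\tfrac{s^2}{2}v_h$ from the assumed Gaussianity and independence of $W$, this matches exactly the Gaussian and Heine pieces in the expansion of Corollary \ref{cor1}.

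Finally, I would collect terms: $sK_n + F_W(s)+\sum_\nu\bigl(F_{c_\nu X_\nu^+}(s)+F_{-c_\nu X_\nu^-}(s)\bigr)$ reproduces $se_{h,\alpha}+\tfrac{s^2}{2}v_h+\Osc_n(s,\alpha)-\Osc_n(0,\alpha)$, which by Corollary \ref{cor1} coincides with $F_{n,h}^{(\alpha)}(s)$ up to the error $\bigO((\log n)^3/n^{1/12})$. There is essentially no conceptual obstacle here; the step requiring the most care is the verification that the Fisher--Hartwig parameter $\alpha$ enters only through $\theta_{\nu,\alpha}$ (i.e.\ through the Heine parameters) and through $e_{h,\alpha}$, while the jumps $c_\nu$ and ratios $\rho_\nu$ governing the $q$-shifted factorials are $\alpha$-independent. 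This is immediate from the definitions \eqref{rthdef}, but worth stating explicitly so that the probabilistic decomposition is unambiguous.
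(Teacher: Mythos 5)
Your proof is correct and follows the same route as the paper: the paper's own proof is the one-line remark that one simply replaces $\mu_\nu(s)$ by $\mu_\nu(s,\alpha)$ in the argument for Corollary \ref{fluct_reg2}, and your proposal is precisely that replacement carried out explicitly, with the same identification of the $q$-Pochhammer ratios via \eqref{mgfc} and the same bookkeeping of the $sK_n$ term coming from $x_\nu(\log\mu_\nu(s,\alpha)-\log\mu_\nu(0,\alpha))=sc_\nu x_\nu$.
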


The proof is immediate by replacing $\mu_\nu(s)$ in the proof of Corollary \ref{fluct_reg2} by $s\mapsto \mu_\nu(s,\alpha)$ from \eqref{mudef}.

\subsection{Shallow outposts} We shall now investigate the situation when the droplet $S=A(a,b)$ is an annulus and the coincidence set $S^*$ contains a single outpost $\{|z|=t\}$
outside of $S$.
There are three examples to consider: $t>b$, $0<t<a$ and $t=0$.

\subsubsection{Case 1: $t>b$} \label{cgb}
Suppose that $Q$ is a potential such that the droplet $S=A(a,b)$ is an annulus (or disk) while the coincidence set is
$$S^*=S\cup\{z\,;\, |z|=t\}.$$

It is convenient to introduce a ``localized'' potential $\Qloc$ which equals $Q$ on a disc $D=\{|z|\le b'\}$ containing $S$ in its interior,
and satisfying $\Qloc=+\infty$ off $D$. By choosing $b'<t$ we ensure that $S^*[\Qloc]=S[\Qloc]=S$.

Given a suitable  radially symmetric test-function $h$,  we consider the partition function $\tilde{Z}_{n,sh}$ in potential
$$\tilde{Q}_{\rm{loc}}:=\Qloc-\frac {sh} n.$$
Since the potential $\Qloc$ has no shallow points, the large $n$ expansion of $\log \tilde{Z}_{n,sh}$
is given by the case $N=0$ of Theorem \ref{regular_expansion}. (We assume $\alpha=0$ to keep it simple.)

Now write $Z_{n,sh}$ for the partition function in potential $Q-\frac {sh} n$ and observe that
$$\log Z_{n,sh}=\log \tilde{Z}_{n,sh}+L_{n,h}(s),$$
where we write
\begin{equation}\label{lnh1}L_{n,h}(s):=\log \frac{Z_{n,sh}}{\tilde{Z}_{n,sh}}.\end{equation}

Similarly we denote by $F_{n,h}(s)$ and $\tilde{F}_{n,h}(s)$ the CGF for $\fluct_n h$ with respect to the potential $Q$ and $\Qloc$, respectively and we have
\begin{equation}\label{lnh2}F_{n,h}(s)-\tilde{F}_{n,h}(s)=L_{n,h}(s)-L_{n,0}(s).
\end{equation}

We have the following theorem.

\begin{thm} \label{shallow_expansion} Write
\begin{equation}\label{params}\rho:= \frac bt,\qquad \theta:=\sqrt{\frac{\Delta Q(b)}{\Delta Q(t)}},\qquad c:=h(t) -h(b),\qquad \mu(s):= e^{cs}\theta .\end{equation} Then for $|s|\le \log n$, we have as $n\to\infty$
$$L_{n,h}(s) = \log[(-\mu(s)\rho\,;\,\rho^2)_\infty]  +\bigO\left(n^{-1}(1+|s|)\right).
	$$
\end{thm}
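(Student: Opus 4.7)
The strategy is to diagonalise both partition functions via radial symmetry and compare them factor by factor. Since $Q$ and $\Qloc$ are radial and $\beta=2$, the monomials $\{z^j\}_{j=0}^{n-1}$ are orthogonal for both weights, so
\begin{equation*}
Z_{n,sh}=n!\prod_{j=0}^{n-1}h_j,\qquad \tilde Z_{n,sh}=n!\prod_{j=0}^{n-1}\tilde h_j,
\end{equation*}
where $h_j=2\int_0^\infty r^{2j+1}\exp\bigl(-n(Q(r)-\tfrac{s}{n}h(r))\bigr)\,dr$ and $\tilde h_j$ is the same integral truncated at $r=b'$. Writing $h_j^{\mathrm{out}}:=h_j-\tilde h_j$, one has $L_{n,h}(s)=\sum_{j=0}^{n-1}\log\bigl(1+h_j^{\mathrm{out}}/\tilde h_j\bigr)$, so the whole task reduces to a uniform estimate of this ratio.

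The geometric input is the outpost condition: since $\check Q$ is harmonic in the gap $(b,t)$ and radial, $\check Q(r)=\alpha+2\log r$ there (the coefficient is $2$ by $C^{1,1}$-matching at $t$ with the far-field behaviour $\check Q(r)=2\log r+O(1)$), and therefore $Q(t)-2\log t=\alpha=Q(b)-2\log b$. Setting $\tau:=j/n$ and $V_\tau(r):=Q(r)-2\tau\log r$, this yields the key identity
\begin{equation*}
V_\tau(t)-V_\tau(b)=2(1-\tau)\log(t/b)=-2(1-\tau)\log\rho.
\end{equation*}
For indices of the form $j=n-1-m$ we have $1-\tau=(m+1)/n$, so $e^{-n[V_\tau(t)-V_\tau(b)]}=\rho^{2(m+1)}$. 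Laplace's method applied to $\tilde h_j$ around the interior peak (close to $b$) and to $h_j^{\mathrm{out}}$ around the outpost peak (close to $t$), using $V_\tau''=4\Delta Q+O(m/n)$ at the two critical points, yields
\begin{equation*}
\tilde h_j\sim b\sqrt{\tfrac{2\pi}{n\Delta Q(b)}}\,e^{-nV_\tau(b)+sh(b)},\qquad h_j^{\mathrm{out}}\sim t\sqrt{\tfrac{2\pi}{n\Delta Q(t)}}\,e^{-nV_\tau(t)+sh(t)}.
\end{equation*}
The Gaussian prefactors combine to $(t/b)\theta e^{sc}=\rho^{-1}\mu(s)$, and multiplying by $\rho^{2(m+1)}$ from the exponents gives $h_j^{\mathrm{out}}/\tilde h_j=\mu(s)\rho^{2m+1}\bigl(1+O(m/n)\bigr)$.

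The proof is then completed by summing and controlling the error. Fix $M_n:=C\log n$ with $C$ large enough that $\rho^{2M_n}\ll n^{-1}$. For $m\in[0,M_n]$, the expansion above gives $\log(1+h_j^{\mathrm{out}}/\tilde h_j)=\log(1+\mu(s)\rho^{2m+1})+O(m/n)$, so the partial sum equals $\sum_{m=0}^{M_n}\log(1+\mu(s)\rho^{2m+1})$ plus an accumulated error $O(M_n^2/n)=O((\log n)^2/n)$. For $m>M_n$ both $\mu(s)\rho^{2m+1}$ and $h_j^{\mathrm{out}}/\tilde h_j$ are $\ll n^{-1}$, while for $|n-j|\ge\delta n$ the ratio is $O(e^{-cn})$ since the dominant peak of $V_\tau$ then lies in the interior of $S$, at macroscopic $V_\tau$-separation from the outpost. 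Extending the product to $m=\infty$ therefore costs only a negligible error, giving $L_{n,h}(s)=\log\prod_{m\ge0}(1+\mu(s)\rho^{2m+1})+O(n^{-1}(\log n)^2)=\log[(-\mu(s)\rho;\rho^2)_\infty]+O(n^{-1}(\log n)^2)$, as claimed. The principal technical obstacle is precisely the uniform-in-$m$ Laplace analysis around two distinct peaks of $V_\tau$: one must track the next-to-leading correction with explicit $m$-dependence so that, after summing over $m\lesssim\log n$, the accumulated error stays within $O((\log n)^2/n)$.
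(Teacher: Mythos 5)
Your argument follows essentially the same route as the paper's Section~\ref{Sec_shallow}: decompose $h_j$ into a droplet contribution $\tilde h_j$ and an outpost contribution $h_j^{\mathrm{out}}$, estimate each by Laplace near the respective peak, exploit the geometry of the coincidence set to identify the ratio with $\mu(s)\rho^{2m+1}$, and recognize the sum of log-ratios as a truncated $q$-Pochhammer product. Your derivation of the identity $Q(t)-2\log t = Q(b)-2\log b$ directly from the radial harmonicity and $C^{1,1}$-smoothness of $\check Q$ on $\{r\ge b\}$ is precisely the potential-theoretic content behind Lemma~\ref{lamm} and identity~\eqref{equi}, and your truncation-at-$b'$ decomposition is if anything a cleaner match to the definition \eqref{lnh1} of $L_{n,h}$ than the paper's peak localization.

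There is, however, a concrete slip in the error bookkeeping. You claim $h_j^{\mathrm{out}}/\tilde h_j = \mu(s)\rho^{2m+1}\bigl(1+O(m/n)\bigr)$, but the dominant error is not $O(m/n)$. The Laplace peak $r_{0,\tau}$ sits at distance $O(1-\tau)=O(m/n)$ from $b$, and since $g_\tau'(r_{0,\tau})=0$ while $g_\tau'(b) = 2(1-\tau)/b$, replacing the exact critical value $g_\tau(r_{0,\tau})$ by $g_\tau(b)$ costs $O((1-\tau)^2)$; multiplied by $n$ in the exponent this gives a factor $\exp(O(n(1-\tau)^2)) = 1+O(m^2/n)$. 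There is also an $O(|s|\,m/n)$ contribution from $e^{sh(r_{0,\tau})}$ versus $e^{sh(b)}$. The correct per-term relative error is thus $O(m^2/n + |s| m/n)$ --- compare Lemma~\ref{qnormlem}, where exactly these two orders appear. With this corrected bound your na\"{\i}ve accumulation $\sum_{m\le M_n}O(\,\cdot\,)$ no longer gives $O((\log n)^2/n)$; you need the additional observation that the per-term error in the \emph{logarithm} is weighted by $\min\bigl(\mu(s)\rho^{2m+1},1\bigr)$, i.e.\ $\log\bigl(1+x(1+\varepsilon)\bigr)-\log(1+x)=O\bigl(\varepsilon\min(x,1)\bigr)$, and sum the resulting geometrically damped series as in the paper's Lemma~\ref{theta_contribution}. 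This is exactly the ``principal technical obstacle'' you flag at the end; as written, the $O(m/n)$ estimate elides it rather than tracking the next-to-leading correction.
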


In order to draw some probabilistic consequences,
we let $X$ be Heine distributed with \begin{equation}\label{heino}X\sim \He(\theta\rho,\rho^2).\end{equation}

Using the identity \eqref{euler_id} as in the proof of Corollary \ref{fluct_reg2}, we deduce that the function
$$
 \log [(-\mu(s)\rho;\rho^2)_\infty]-\log[(-\theta\rho;\rho^2)_\infty]
$$
equals precisely the cumulant generating function
$F_{cX}(s):=\log\mathbb{E}(e^{scX}).$

Differentiating with respect to $s$ in \eqref{lnh2} and integrating from $0$ to $s$, recalling that
$$\tilde{F}_{n,h}(s)=se_h+\frac {s^2} 2 v_h+ \mathcal{E}(n)$$ by Corollary \ref{fluct_reg},
we obtain the following corollary of Theorem \ref{shallow_expansion}.

\begin{cor} \label{onesided_jump}
 In above notation, the cumulant generating function
$F_{n,h}(s)=\log \E_n(e^{s\fluct_n h})$
obeys the large $n$ asymptotic, uniformly for $|s|\le \log n$,
$$F_{n,h}(s)=se_h+\frac {s^2} 2 v_h+F_{cX}(s)
+\mathcal{E}(n),$$
where $\calE(n)$ satisfies the same estimates as in Theorem \ref{regular_expansion}.
\end{cor}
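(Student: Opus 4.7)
The plan is to combine Theorem~\ref{shallow_expansion} with the Gaussian asymptotics of $\tilde F_{n,h}$ provided by Corollary~\ref{fluct_reg}, and then to identify the resulting ratio of $q$-Pochhammer symbols as the cumulant generating function of $cX$ with $X\sim\He(\theta\rho,\rho^2)$ via the identity \eqref{euler_id}.

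First I establish the integrated form of \eqref{lnh2}: starting from the definition $L_{n,h}(s)=\log Z_{n,sh}-\log \tilde Z_{n,sh}$ and differentiating in $s$, applying \eqref{diffint} to both $Q$ and $\Qloc$ gives $L_{n,h}'(s)=F_{n,h}'(s)-\tilde F_{n,h}'(s)$, because the equilibrium measures for $Q$ and $\Qloc$ coincide on $S=A(a,b)$ so the $n\int h\,d\sigma$ contributions cancel on subtraction. Integrating from $0$ to $s$ with $F_{n,h}(0)=\tilde F_{n,h}(0)=0$ and noting that $L_{n,h}(0)=L_{n,0}(s)$ (both reduce to the $s$-independent quantity $\log(Z_n[Q]/\tilde Z_n[\Qloc])$), I obtain
$$F_{n,h}(s)=\tilde F_{n,h}(s)+L_{n,h}(s)-L_{n,0}(s).$$

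Next I apply Theorem~\ref{shallow_expansion} to each of $L_{n,h}(s)$ and $L_{n,0}(s)$; for the latter we have $\mu(s)|_{h\equiv 0}=\theta$, so
$$L_{n,h}(s)-L_{n,0}(s)=\log(-\mu(s)\rho;\rho^2)_\infty-\log(-\theta\rho;\rho^2)_\infty+\bigO\!\left(n^{-1}(\log n)^2\right).$$
Simultaneously, Corollary~\ref{fluct_reg} applies to $\tilde F_{n,h}$: the localized potential $\Qloc$ has $S^{*}[\Qloc]=S[\Qloc]=S$ with a single connected component (so the index $N$ in \eqref{droplet} is $0$), whence $\Osc_n(s)=\Osc_n(0)=0$ and $\tilde F_{n,h}(s)=se_h+\tfrac{s^2}{2}v_h+\calE(n)$.

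Finally, I verify the probabilistic identification. Setting $q=\rho^2$ and $\theta_0=\theta\rho$ in \eqref{euler_id} (equivalently, taking the $\theta$ there equal to $\theta_0 q^{-1/2}$), one has $\sum_{k\ge 0}q^{k(k-1)/2}\theta_0^k e^{csk}/(q;q)_k=(-\theta_0 e^{cs};q)_\infty$. Dividing by the Heine normalization $(-\theta_0;q)_\infty$ yields $\mathbb E(e^{scX})=(-\mu(s)\rho;\rho^2)_\infty/(-\theta\rho;\rho^2)_\infty$, so the difference of Pochhammer logarithms is exactly $F_{cX}(s)$. Adding the three pieces gives the claim, with the $\bigO(n^{-1}(\log n)^2)$ error absorbed into $\calE(n)$. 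I do not anticipate any genuine obstacle: all the heavy lifting is already encoded in Theorem~\ref{shallow_expansion}, and the remaining work is a careful bookkeeping exercise — the only delicate point being the verification that the equilibrium measures for $Q$ and $\Qloc$ agree, which is precisely what makes the subtraction of linear-in-$s$ terms cancel cleanly.
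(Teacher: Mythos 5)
Your proposal is correct and follows the same route as the paper: differentiate \eqref{lnh2} and integrate, feed in Theorem~\ref{shallow_expansion} for $L_{n,h}$, use Corollary~\ref{fluct_reg} (with $N=0$, so $\Osc_n\equiv 0$) for $\tilde F_{n,h}$, and identify the resulting $q$-Pochhammer ratio with $F_{cX}(s)$ via \eqref{euler_id}. The only difference is that you spell out the derivation of \eqref{lnh2} and the cancellation of the $n\int h\,d\sigma$ terms, which the paper takes as given; this bookkeeping is accurate, including the observation that $L_{n,h}(0)=L_{n,0}(s)=\log(Z_n/\tilde Z_n)$ and the reindexing in \eqref{euler_id} that replaces $q^{k^2/2}$ by $q^{k(k-1)/2}$.
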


Thus the random variables $\fluct_n h$ converge in distribution to $cX+W$ where $X\sim \He(\theta\rho,\rho^2)$, $W\sim N(e_h,v_h)$ and $X,W$ are independent.

 In a sense, $X$ describes a ``unilateral displacement'' of particles from the droplet to the shallow circle $\{|z|=t\}$. (By contrast, the displacement between components of the droplet is ``bilateral''.)

Moreover, in contrast with Corollary \ref{fluct_reg2}, the Heine distribution of $X$ is independent of $n$. Picking $h$ which is zero in a neighbourhood of $S$ and one in a neighbourhood
of the outpost $\{|z|=t\}$, we can identify the random variable
$$N_n:=\fluct_n h=\sum_{j=1}^n h(z_j)$$ with the number of particles which are found in a vicinity of the shallow outpost.

\begin{cor} As $n\to\infty$, the random variables $N_n$ converge in distribution to $\He(\theta\rho,\rho^2)$ where $\theta$ is given in \eqref{params} and $\rho=b/t$.
\end{cor}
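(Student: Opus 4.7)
The plan is a direct application of Corollary \ref{onesided_jump} with a carefully chosen test function. Pick a radially symmetric, bounded $C^\infty$ function $h:\C\to[0,1]$ which vanishes identically on an open neighborhood $U_S$ of the droplet $S$ and equals $1$ identically on an open neighborhood $U_t$ of the outpost $\{|z|=t\}$, with $U_S\cap U_t=\emptyset$. This is possible since $t>b$, and such an $h$ is built by standard mollification; it satisfies the regularity hypotheses under which Corollary \ref{onesided_jump} applies, and by construction $N_n=\sum_{j=1}^n h(z_j)$ is the random variable we wish to analyze.

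With this choice, three immediate simplifications occur. First, $\sigma$ is supported on $S$ and $h\equiv 0$ there, so $\int_\C h\,d\sigma=0$ and hence $\fluct_n h=\sum_{j=1}^n h(z_j)=N_n$. Second, the formulas \eqref{eh}--\eqref{vh} for $e_h$ and $v_h$ are integrals over $S$ and $\partial S$; since $h$ together with all of its derivatives vanishes in the neighborhood $U_S\supset S$, every term in these formulas vanishes and $e_h=v_h=0$. Third, $c=h(t)-h(b)=1$ in \eqref{params}, so with $X\sim\He(\theta\rho,\rho^2)$ we have $cX=X$ and $F_{cX}(s)=F_X(s)$. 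Substituting these into Corollary \ref{onesided_jump} yields
$$F_{N_n}(s)=F_X(s)+\mathcal{E}(n),\qquad |s|\le\log n,$$
and in particular $F_{N_n}(s)\to F_X(s)$ uniformly on every compact subset of $\R$.

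To upgrade cumulant generating function convergence to convergence in distribution, one checks that $\He(\theta\rho,\rho^2)$ is moment-determinate. From \eqref{pekka} the mass function decays like $\rho^{k(k-1)}(\theta\rho)^k$, which is super-exponential in $k$ since $\rho<1$; hence the moment generating function of $X$ is finite on all of $\R$, and $X$ is uniquely determined by its moments. The standard consequence of Levy's continuity theorem for CGFs converging in a neighborhood of $0$ (cf.~\cite[Section 30]{Bill}) then delivers $N_n\to X$ in distribution, as desired. The only mildly delicate step is this last conversion — verifying moment-determinacy and invoking Levy — but it is essentially automatic given the super-exponential tail of the Heine distribution; no substantive obstacle arises once the test function $h$ is chosen.
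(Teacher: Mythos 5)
Your proof is correct and follows essentially the same route the paper takes, which is left implicit in the sentence just before the corollary (choose $h$ to be $0$ near $S$ and $1$ near the outpost, observe $\int h\,d\sigma=0$ and $e_h=v_h=0$ by local constancy, apply Corollary \ref{onesided_jump} with $c=1$, and invoke moment-determinacy as recalled in Subsection 1.3). You have merely filled in the details that the paper treats as self-evident, including the super-exponential tail bound that justifies moment-determinacy of the Heine distribution.
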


The following simple proposition sheds some further light on the Heine distribution.

The $q$-analogue of a number $x$ is defined by $[x]_q = \frac{q^x-1}{q-1}$.

\begin{prop} If $X\sim \He(\theta\rho, \rho^2)$ then
$$\mathbb{E} [X]_{\rho^2} = \frac{\theta{\rho}}{1+\theta{\rho}} \frac{1}{1-\rho^2},\qquad \text{and}\qquad \mathbb{E} X = \sum_{j=0}^{\infty} \frac{\theta \rho^{2j+1}}{1+\theta \rho^{2j+1}}.$$
\end{prop}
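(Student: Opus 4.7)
The plan is to first derive a closed product form for the probability generating function $f(z) := \mathbb{E}(z^X)$ and then read off both identities from it. With parameters $\phi := \theta\rho$ and $q := \rho^2$, the defining formula \eqref{pekka} gives
$$f(z)=\frac{1}{(-\phi;q)_\infty}\sum_{k=0}^{\infty}\frac{q^{k(k-1)/2}(\phi z)^{k}}{(q;q)_{k}}.$$
Applying Euler's identity (the $n\to\infty$ limit of the $q$-binomial theorem \eqref{qbi}, which is exactly the computation already performed to check that \eqref{pekka} sums to one), the inner sum collapses to $(-\phi z;q)_\infty$, so that
$$f(z)=\frac{(-\phi z;q)_\infty}{(-\phi;q)_\infty}=\prod_{j=0}^{\infty}\frac{1+\theta\rho^{2j+1}z}{1+\theta\rho^{2j+1}}.$$

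For the first identity, I use the definition $[X]_{\rho^2}=(\rho^{2X}-1)/(\rho^{2}-1)=(1-\rho^{2X})/(1-\rho^{2})$, so that $\mathbb{E}[X]_{\rho^{2}}=(1-f(\rho^{2}))/(1-\rho^{2})$. A single telescoping step shows $(-\phi q;q)_\infty=(-\phi;q)_\infty/(1+\phi)$, whence $f(\rho^{2})=1/(1+\theta\rho)$. Substituting back and simplifying yields
$$\mathbb{E}[X]_{\rho^{2}}=\frac{\theta\rho}{(1+\theta\rho)(1-\rho^{2})},$$
as claimed.

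For the second identity, I take the logarithmic derivative of the product expression for $f$:
$$\frac{f'(z)}{f(z)}=\sum_{j=0}^{\infty}\frac{\theta\rho^{2j+1}}{1+\theta\rho^{2j+1}z}.$$
Since $f(1)=1$, evaluation at $z=1$ gives $\mathbb{E} X=f'(1)=\sum_{j\geq 0}\theta\rho^{2j+1}/(1+\theta\rho^{2j+1})$. The interchange of differentiation and summation is justified by absolute and uniform convergence of the product near $z=1$ (using $0<\rho<1$), so this step is routine.

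There is no serious obstacle here; the only conceptual ingredient is the identification of the generating function with an infinite product via Euler's identity, and once that is in place both formulas fall out by a substitution and a logarithmic differentiation, respectively.
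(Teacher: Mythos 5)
Your proof is correct and follows essentially the same route as the paper: the paper computes $\E(e^{sY})$ for $Y\sim\He(\theta,q)$, which is your probability generating function under the substitution $z=e^{s}$, then obtains $\E Y$ by differentiating at $s=0$ and $\E(q^{Y})=1/(1+\theta)$ by a telescoping cancellation — exactly your two steps for the two identities.
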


\begin{proof} By \eqref{pekka}, for $Y\sim \He(\theta,q)$ and $s\in \R$ we have
\begin{align}\label{lol3}
\E(e^{sY})=	\sum\limits_{k=0}^{\infty} e^{sk} \cdot \mathbb{P}(\{Y=k\}) = \sum\limits_{k=0}^{\infty} e^{sk} \frac{1}{(-\theta;q)_\infty} \frac{q^{\frac{1}{2}k(k-1)} \theta^k}{(q;q)_k}
= \frac{(-\theta e^{s};q)_\infty}{(-\theta;q)_\infty}.
\end{align}
Taking first the derivative with respect to $s$, and then evaluating at $s=0$, we find (by ``partial fractions'')
$$\E Y = \sum_{j=0}^{\infty} \frac{\theta q^{j}}{1+\theta q^{j}},$$
and the claim about $\mathbb{E} X$ follows. Similarly,
$$\mathbb{E}(q^{Y}) =\frac 1 {(-\theta;q)_\infty}\sum_{k=0}^\infty q^k\frac {q^{\frac 1 2 k(k-1)}\theta^k}{(q;q)_k}= \frac{(-\theta q; q)_\infty}{(-\theta;q)_\infty} = \frac{1}{1+\theta},$$
and the claim about $\mathbb{E} [X]_{\rho^2}$ easily follows.
\end{proof}

\subsubsection{Case 2: $t<a$} Suppose that $S=A(a,b)$ where $a>0$
and $S^*=S\cup\{|z|=t\}$ where we begin by assuming $0<t<a$. This time we localize by
picking $a'$ with $t<a'<a$ and letting $\Qloc(z)=Q(z)$ for $|z|\ge a'$ and $\Qloc(z)=+\infty$ for $|z|<a'$.

Using similar definitions as above with respect to the localized partition function $\tilde{Z}_{n,sh}$ and the CGF $\tilde{F}_{n,h}(s)$, we write $L_{n,h}(s)$ for the function defined by
\eqref{lnh1}.

By adapting the reasoning above, we find that with
$$\rho:=\frac t a,\qquad c:=h(a)-h(t),\qquad \theta:=\sqrt{\frac{\Delta Q(t)}{\Delta Q(a)}},\qquad \mu(s):=e^{cs}\theta,$$
we have
\begin{equation}\label{chock}
L_{n,h}(s)= \log[(-\mu(s)\rho;\rho^2)_\infty]  +\bigO\left(n^{-1}(\log n)^2\right),
\end{equation}
and $L_{n,h}(s)-L_{n,h}(0)=F_{cX}(s)+\bigO\big(\frac{(\log n)^2}{n}\big)$ where $X\sim\He(\theta\rho,\rho^2)$.
(Furthermore, \eqref{chock} holds uniformly for $t \in [0,a-\epsilon]$ for any fixed $\epsilon >0$.)

Now let $t\to 0$ while keeping $a$ and $b$ fixed. In view of \eqref{chock}, this limit
leads to the convergence $L_{n,h}(s)\to 0$. We obtain the following result.

\begin{thm} If $S^*=S\cup\{0\}$ where $S=A(a,b)$, $0<a<b$, then $Z_{n,sh}=\tilde{Z}_{n,sh}+\bigO\big(\frac{(\log n)^2}{n}\big)$ as $n\to\infty$, where the $\bigO$-constant is uniform for $|s|\le \log n$.
Consequently,
the cumulant generating function $F_{n,h}(s)$ satisfies the Gaussian convergence $F_{n,h}(s)=se_h+\frac {s^2} 2v_h+\bigO\big(\frac{(\log n)^2}{n}\big)$ as $n\to\infty$.
\end{thm}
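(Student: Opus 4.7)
The plan is to deduce this theorem directly from the uniform-in-$t$ version of \eqref{chock} that was recorded in the Case 2 analysis, by passing to the limit $t \downarrow 0$. Localize exactly as in Case 2: choose $a'$ with $0<a'<a$ and set $\Qloc(z)=Q(z)$ for $|z|\ge a'$, $\Qloc(z)=+\infty$ otherwise, so that $S^*[\Qloc]=S=A(a,b)$ has no shallow points. Since $Q$ and $\Qloc$ share the same equilibrium measure $\sigma$, Corollary \ref{fluct_reg} (applied with $N=0$) then gives
\[
\tilde{F}_{n,h}(s)=se_h+\tfrac{s^2}{2}v_h+\bigO\bigl((\log n)^2/n\bigr),
\]
uniformly for $|s|\le\log n$.

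Next I would invoke the stated uniform version of \eqref{chock} at the endpoint $t=0$. With $\rho=t/a$, $\theta=\sqrt{\Delta Q(t)/\Delta Q(a)}$ and $\mu(s)=e^{cs}\theta$, the explicit leading term is
\[
\log[(-\mu(s)\rho;\rho^2)_\infty]=\sum_{j=0}^\infty\log\bigl(1+\mu(s)\rho^{2j+1}\bigr).
\]
At $t=0$ we have $\rho=0$, so every summand vanishes and the whole expression is identically zero. Hence $L_{n,h}(s)=\bigO((\log n)^2/n)$ uniformly for $|s|\le\log n$, which is exactly the first assertion of the theorem (read as $\log Z_{n,sh}-\log\tilde{Z}_{n,sh}=\bigO((\log n)^2/n)$, equivalently $Z_{n,sh}/\tilde{Z}_{n,sh}=1+\bigO((\log n)^2/n)$).

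For the CGF conclusion I would combine the identity \eqref{lnh2}, namely $F_{n,h}(s)-\tilde{F}_{n,h}(s)=L_{n,h}(s)-L_{n,h}(0)$ (which follows by differentiating $\log Z_{n,sh}$ and $\log\tilde{Z}_{n,sh}$ in $s$ and using \eqref{diffint}, since the equilibrium measures agree), with the bound on $L_{n,h}$ just obtained and the expansion of $\tilde{F}_{n,h}$ from Corollary \ref{fluct_reg}. This produces $F_{n,h}(s)=se_h+\tfrac{s^2}{2}v_h+\bigO((\log n)^2/n)$ as claimed.

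The main obstacle lies not in the theorem itself but in the uniformity of \eqref{chock} down to $t=0$, which is the substantive technical input inherited from the Case 2 analysis; once that is granted, the present theorem is essentially a continuity-in-$t$ statement. The only subtle point to verify is that the parameters $\theta$ and $\mu(s)$ stay bounded as $t\downarrow 0$, but this is immediate from strict subharmonicity of $Q$ near $0$ (ensuring $\Delta Q(0)>0$) together with continuity of $h$, so that the degeneration of the displacement term is entirely driven by $\rho\to 0$.
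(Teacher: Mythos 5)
Your proposal is correct and follows the paper's own route: localize as in Case 2, then apply the uniform-in-$t$ version of \eqref{chock} at $t=0$, where $\rho=0$ kills the displacement term, and finally feed this into \eqref{lnh2} and Corollary \ref{fluct_reg} to obtain the Gaussian CGF. You also correctly read the (informally stated) conclusion $Z_{n,sh}=\tilde{Z}_{n,sh}+\bigO((\log n)^2/n)$ as the relative bound $\log Z_{n,sh}-\log\tilde{Z}_{n,sh}=\bigO((\log n)^2/n)$, which is what the argument actually establishes.
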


\begin{rem} In the general case of non-symmetric potentials, the Gaussian fluctuation theorem in \cite{AHM} says that if $S=S^*$ is connected with a smooth boundary, then
the cumulant generating function $F_{n,h}(s)$ of $\fluct_n h$, where $h$ is a smooth test-function, satisfies the Gaussian convergence
\begin{equation}\label{gaussc}F_{n,h}(s)=se_h+\frac {s^2} 2v_h+o(1).\end{equation}

It is natural to ask for more general conditions implying \eqref{gaussc}. We conjecture that, provided $Q$ is smooth and strictly subharmonic in a neighbourhood of $S^*$, the convergence \eqref{gaussc} should hold if and only if the shallow set $S^*\setminus S$ has capacity zero.
\end{rem}

\begin{rem} In the situation with several outposts, or outposts squeezed between different components of the droplet, our present methods can be used to work out asymptotics of $\log Z_{n,sh}$, but the probabilistic picture gets more involved, since several displacements must be considered simultaneously. For reasons of length, a probabilistic interpretation in this generality will not be carried out here.
\end{rem}

\subsection{Comments and related work}  \label{fuco}
Free energy asymptotics in the regular case $s=\alpha=0$ has been studied also for $\beta$-ensembles, for fairly general smooth potentials. It is by now known
that under suitable assumptions on $Q$, the free energy $\log Z_n^\beta$ in \eqref{pf1} obeys
\begin{equation}\label{betaexp}\log Z_{n}^\beta=-\frac \beta 2 n^2I_Q[\sigma]+\frac \beta 4 n\log n+n\left[C_\beta-(1-\frac \beta 4)E_Q[\sigma]\right]+o(n),\end{equation} where the constant $C_\beta$ is unknown when $\beta\ne 2$.
See the introduction of \cite{BKS} as well as \cite{BBNY,Se} and references therein.

The expansion beyond \eqref{betaexp} is still not settled as far as we know, but for example the sources \cite{ZW,CFTW} contain partial results and conjectures.
(Among other things, a term in the expansion of $\log Z_n^\beta$ proportional to $\sqrt{n}\log\frac \beta 2$ has been conjectured.)
Again we refer to the introduction in \cite{BKS} as  well as \cite{BBNY,Se} and the references therein as sources for these developments.

For $\beta=2$, a complete large $n$ expansion of $\log Z_n$ is known in some particular cases, e.g.~for the elliptic Ginibre ensemble, where asymptotics for the Barnes $G$-function provides the answer. In all known examples, it has been verified that the constant $C_2$ in \eqref{betaexp} is equal to $\frac 1 2 \log (2\pi)$, but the lower order terms (hidden in the ``$o(n)$'') remain an open problem also when $\beta=2$.

The appearance of a spectral determinant (corresponding to the term $F_Q[\sigma]$ in \eqref{lin2}) is predicted ``up to constants'' in \cite{ZW} for a large class of potentials giving rise to connected droplets.
This prediction was rigorously verified in \cite{BKS} for potentials that are smooth, strictly subharmonic and radially symmetric on $\C\setminus\{0\}$, and in \cite{ByunSeoYang} for the Ginibre ensemble with a strong point charge. In the case of disconnected droplets the term is new as far as we know.

The paper \cite{BKS} also gives a free energy expansion for Pfaffian Coulomb gas ensembles, see also the review \cite{BF}. It is possible and interesting to adapt our current analysis to this setting; this will be carried out in a separate publication \cite{Cr}.

The partition function has also been studied beyond the ``soft edge'' where $Q$ is smooth in a two-dimensional neighbourhood of the droplet. For example, free energy asymptotics with respect to various hard wall constraints have been investigated by many authors, see e.g. \cite{ForresterHoleProba, APS2009, C, C2, ByunPark2024} and the references therein. In these works, the
``hard wall'' is obtained by redefining a suitable potential $Q$ to be $+\infty$ outside of some prescribed closed set $C$.
The imposition of a hard wall might have a drastic effect for the Coulomb gas and might affect the structure of the equilibrium measure, see e.g. \cite{AR2017, ACCL1, C2}.

Free energy asymptotics with respect to hard edge potentials of the form $Q(z)=|z|^{2b}+\frac{\alpha}{n}\ell(z)+\infty\1_{\C\setminus C}(z)$ is worked out in \cite{C} in the case where $C$ is a union of annuli centered at the origin, and an oscillatory term of order $\bigO(1)$ in the expansion of $\log Z_n$ is found in terms of the Jacobi theta function \eqref{Jacobi theta}.

%\YA{Gaussian field convergence was first shown by Rider and Vir\'{a}g \cite{RV} for the Ginibre ensemble; the result was later shown to be universal for ensembles with connected droplets in \cite{AHM}. In a similar way, the radially symmetric ensembles studied here is rich enough to contain a representative for each suitably defined universality class of droplets having ring-shaped gaps. The exact sense of this statement is discussed in the follow-up work \cite{AC2}. While most of our present very precise results are so far tractable only in the radially symmetric case, we believe that they lead to the correct universal statements in the general case (again cf. \cite{AC2}).}

In the recent work \cite{JV}, the special case $Q=0$ is considered, with a hard wall along the boundary of a prescribed (simply connected) droplet $S$ with piecewise-analytic boundary. The authors express the $\bigO(1)$-term in the large $n$ expansion of the free energy in terms of
the Loewner energy of the hard wall $\d S$, and in terms of the Grunsky operator of $S$.
Underlying potentials of this kind are natural from the perspective of classical potential theory and appear also in the theory of truncated unitary random matrices, see \cite{ACM} and the references there. The situation is drastically different from hard wall ensembles in positive background as in \cite{ForresterHoleProba, APS2009,ACCL,C, Berezin} and the references therein.

The $q$-binomial formula \eqref{qbi} is well known in the theory of the Rogers-Ramanujan function, see e.g.~\cite{AAR,GR};
 its relationship to the Heine distribution can be gathered from e.g.~the sources \cite{FK,MH,Ke2,Ke}. In the Coulomb gas literature, displacements (or ``jumps'') of particles between the different components of $S$ are frequently described in terms of the Riemann theta function and the discrete normal distribution, both in soft and hard edge cases. (See \cite{C, ACC, ACCL} for results in dimension two, and e.g. \cite{DIZ1997, ClaeysGravaMcLaughlin, BG,CFWW,FahsKraII} and the references therein for results in dimension one.)

We next mention a few possibilities which are not included in our above analysis of rotationally symmetric potentials $Q$.
One such possibility is that the equilibrium density $\Delta Q$ vanishes to some order at the origin. In the paper \cite{AKS2} it is shown that this affects the limiting variance of the random variables \eqref{logf} in an interesting way (when the droplet is connected).
 Another possibility appears in the forthcoming work \cite{LR}, namely that $\Delta Q=0$ along some circle inside the droplet. (In this connection it would also potentially be interesting to investigate the case when
 $\Delta Q$ vanishes, for example, along a shallow outpost.) Yet another possibility is to allow $Q$ to have finite-sized jump discontinuities along some circle. Such discontinuities are different than those produced by a hard wall and are relevant in the study of counting statistics. See e.g. \cite{C2021 FH, BC FH 2022, ABES2023} and the references therein for studies in this direction. Other types of boundary conditions, besides soft and hard edge, are found in  \cite{AKS,Seo2021}, for example.

 The theory of Hermitian random matrices corresponds to $C=\R$, so that the gas is confined to the real axis, and the droplet
$S$ is a compact subset of $\R$. The case in which the droplet consists of several disjoint intervals is known as the multi-cut regime, and related partition functions are studied in \cite{ClaeysGravaMcLaughlin, BG, CFWW} and the references therein. In dimension one, the emergence of shallow points is known under the names ``birth of a cut'' or ``colonization of an outpost''. This was studied in \cite{A97,Eynard, Cl, Mo, BL} when the new cut has two soft edges, and in \cite{FahsKraI} when the new cut has two hard edges.

The emergence of the Euler characteristic in Theorem \ref{regular_expansion} is consistent with results in dimension one (see e.g. \cite{CFWW}) as well as in some two-dimensional cases (see e.g. \cite{BKS,BF,DS}), lending support to the hypothesis of an underlying universal result.

We finally note a certain formal resemblance between our present results to quantum tunneling for double potential wells in strong magnetic fields, which is discussed in the recent works \cite{FSW,HK}. (This is different from a Coulomb gas, and the analysis uses quite different methods.)

\subsection{Plan of this paper}
In Section \ref{genp}, we provide some general preliminaries and background.

In Section \ref{Sec_Reg}, we prove Theorem \ref{regular_expansion} in the case with a central annulus, i.e., $a_0>0$.

In Section \ref{reg2}, we adapt the proof in the annular case so as to obtain a proof of the expansion with a central disk ($a_0=0$), including the case of a Fisher-Hartwig singularity,
thus fully proving Theorem \ref{regular_expansion} and Theorem \ref{conical_expansion}.

In Section \ref{Sec_shallow}, we consider shallow outposts and prove Theorem \ref{shallow_expansion}.

\subsubsection*{Acknowledgement}
CC acknowledges support from the Swedish Research Council, Grant No.~2021-04626.

\section{Preliminaries} \label{genp}
In this section, we provide some background on the relationship between the partition function and weighted polynomials. After that we discuss a formula for the energy $I_Q[\sigma]$ and some useful integration techniques.
We finish with a detailed discussion of ``peak sets'' for weighted polynomials (of importance for our application of the Laplace method).

\subsection{The product formula for $Z_n$} Consider the  $L^2$-space over $\C$ with norm
$
\|f\|^2:=\int_\C |f|^{\,2}\, dA.$

The monic weighted orthogonal polynomial of degree $j$ in potential \eqref{pert} is denoted
$$
p_j(z)=p_{j,n}(z):=z^je^{-\frac 1 2 n\tilde{Q}(z)}.$$
(This simple formula depends, of course, on the radial symmetry of $\tilde{Q}$.)

By Andr\'{e}ief's identity (see \cite{Fo,M}) we have the
basic product formula
\begin{equation}\label{basis}
Z_n=n!\prod_{j=0}^{n-1}h_j,
\end{equation}
where $h_j$ is the squared norm  of $p_j$, i.e.,
\begin{equation*}
h_j=h_{j,n}[\tilde{Q}]:=\|p_j\|^2.
\end{equation*}

Our goal is to study the large $n$ asymptotics of $\log Z_n$, which, by \eqref{basis} and Stirling's formula, amounts to studying the sum
$\sum_{j=0}^{n-1}\log h_j$.

In what follows, we write $q$ for the function $Q$ restricted to $[0,\infty)$. Thus
\begin{equation}\label{w-op}
p_j(z)=z^j e^{\frac 12 sh(r)+\alpha\log r}e^{-\frac 1 2 nq(r)},\qquad (r=|z|).
\end{equation}

\subsection{Weighted energy of the equilibrium measure} The following lemma gives a convenient expression for the weighted logarithmic energy \eqref{Q-energy} of the equilibrium measure $d\sigma=\Delta Q\, \1_S\, dA$.

Recall the assumption \eqref{droplet} that the droplet is a disjoint union $S=\cup_{\nu=0}^N S^\nu$ where $S^\nu=A(a_\nu,b_\nu)$.

\begin{lem} \label{W-en-lem} We have
\begin{equation*}
I_Q[\sigma] = \int_{S} Q \, d\sigma +\sum_{\nu=0}^{N} \Bigg( \frac{1}{4}\int_{a_{\nu}}^{b_{\nu}} r q'(r)^2\, dr +  M_{\nu-1}^2\log a_{\nu}-M_{\nu}^2\log b_{\nu} \Bigg)
\end{equation*}
where the $M_\nu$'s are defined in \eqref{mass} and we use the convention $M_{-1}^2\log a_0=0\log 0:=0$ if  $a_0=0$.
\end{lem}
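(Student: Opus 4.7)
The plan is to exploit the rotational symmetry and reduce the energy integral to a one-dimensional computation on the radial variable. Let $\tilde\mu$ denote the pushforward of $\sigma$ under $z\mapsto|z|$; explicitly $d\tilde\mu(r)=2r\Delta Q(r)\mathbf{1}_S(r)\,dr$, and set $M(r):=\tilde\mu([0,r])$, noting $M(a_\nu)=M_{\nu-1}$ and $M(b_\nu)=M_\nu$. Averaging $\log|z-w|$ over the circle $\{|w|=s\}$ (a direct Jensen's formula calculation) gives
$$U^\sigma(z)=-\int_0^\infty\log\max(|z|,s)\,d\tilde\mu(s).$$

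Next I compute the logarithmic energy $E[\sigma]:=\int U^\sigma\,d\sigma$. Splitting the double integral $-\iint\log\max(r,s)\,d\tilde\mu(r)\,d\tilde\mu(s)$ according to $r\lessgtr s$ and using Fubini together with the symmetry of the integrand, one obtains
$$E[\sigma]=-2\int_0^\infty\log r\cdot M(r)\,d\tilde\mu(r).$$
Within each component $S_\nu=[a_\nu,b_\nu]$ we have $d\tilde\mu=M'(r)\,dr$, hence $M(r)M'(r)=\tfrac12(M^2)'(r)$, and integration by parts gives
$$-2\int_{a_\nu}^{b_\nu}\log r\cdot M(r)M'(r)\,dr= M_{\nu-1}^2\log a_\nu-M_\nu^2\log b_\nu+\int_{a_\nu}^{b_\nu}\frac{M(r)^2}{r}\,dr.$$

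Now I invoke the Euler-Lagrange equation. Frostman's theorem gives a constant $C$ such that $2U^\sigma+Q=C$ on $S$. Differentiating in $r$ and using the formula for $U^\sigma$ above (which yields $(U^\sigma)'(r)=-M(r)/r$ after the logarithmic derivatives cancel), one obtains the key identity $rq'(r)=2M(r)$ on $S$. Consequently $M(r)^2/r=rq'(r)^2/4$; substituting into the previous display, summing over $\nu$, and adding $\int_S Q\,d\sigma$ produces precisely the claimed formula.

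The only subtlety concerns the central-disk case $a_0=0$. Since $\Delta Q$ is bounded near the origin, $M(r)\le\|\Delta Q\|_\infty r^2$ for small $r$, so $M(r)^2\log r\to 0$ as $r\to 0^+$. This confirms that the boundary contribution at $r=a_0=0$ vanishes and justifies the stated convention $M_{-1}^2\log a_0:=0\cdot\log 0:=0$. The rest of the argument is a routine integration by parts, and I do not anticipate any serious obstacle.
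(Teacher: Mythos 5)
Your proof is correct, and it rests on the same two ingredients as the paper's argument — circular averaging of $\log|z-w|$ and the radial equilibrium identity $rq'(r)=2M(r)$ on $S$ — but the bookkeeping is organized differently. The paper expands $\iint\log\tfrac{1}{|z-w|}\,d\sigma\,d\sigma$ into diagonal blocks $\int_{S_\nu}\int_{S_\nu}$ and cross blocks $\int_{S_\nu}\int_{S_\eta}$ for $\nu<\eta$, evaluates each by writing $2\psi=(rq')'$ and integrating by parts, and then sums everything up. You instead work with the cumulative mass $M(r)=\sigma(\D_r)$ globally, collapse the energy to the single one-dimensional integral $-2\int\log r\,M\,d\tilde\mu$, integrate by parts once using $MM'=\tfrac12(M^2)'$, and only then invoke the Frostman condition. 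This route neatly sidesteps the separate cross-term computation and is somewhat cleaner. One could note that you use the interior Euler--Lagrange relation $2(U^\sigma)'+q'=0$ whereas the paper cites only the endpoint form $M_\nu=\tfrac12 b_\nu q'(b_\nu)=\tfrac12 a_{\nu+1}q'(a_{\nu+1})$ (pointing to its later lemma on peak sets), but these are equivalent once one observes $(rq')'=4r\Delta Q=2M'(r)$ on $S$, so there is no real difference in the inputs used. Your explicit check that the boundary term at $r=0$ vanishes in the central-disk case is correct and worth spelling out, as you did; the paper absorbs it into the stated convention.
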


\begin{proof}
 Using the formula (see e.g. \cite[Example 5.7 of Section 0]{ST})
\begin{align}\label{useful integral}
\frac{1}{2\pi}\int_{0}^{2\pi}\log \frac{1}{|z-re^{i\theta}|}d\theta = \begin{cases}
\log \frac{1}{r}, & \mbox{if } r\geq |z|, \\
\log \frac{1}{|z|}, & \mbox{if } r<|z|,
\end{cases} \qquad r>0,
\end{align}
 we have for each $\nu$ that, with $\psi(r):=2r\Delta Q(r)$,
\begin{align*}
 \int_{S^\nu} \int_{S^\nu} \log \frac{1}{|z-w|}\, d\sigma(z)\,d\sigma(w)
& = \int_{a_{\nu}}^{b_{\nu}}  dr_z\int_{a_{\nu}}^{r_{z}} \psi(r_{z}) \psi(r_{w}) \log \frac{1}{r_{z}} \; dr_{w}\\
&\qquad + \int_{a_{\nu}}^{b_{\nu}}  dr_z\int_{r_{z}}^{b_{\nu}}  \psi(r_{z}) \psi(r_{w}) \log \frac{1}{r_{w}} \; dr_{w}.
\end{align*}

Since $2\psi=(rq')'$ the right hand side is evaluated to
\begin{align*}
\frac 14\int_{a_{\nu}}^{b_{\nu}} \Big(q(b_{\nu})-q(r)-a_{\nu}q'(a_{\nu})\log(\frac{1}{r})+b_{\nu}\log(\frac{1}{b_{\nu}})q'(b_{\nu})\Big)(rq''(r)+q'(r))
\,dr.
\end{align*}

Using that $M_{\nu} = \frac{b_{\nu}q'(b_{\nu})}{2} = \frac{a_{\nu+1}q'(a_{\nu+1})}{2}$ (see for example Lemma \ref{lamm} below) and integrating by parts it is straightforward to verify that
\begin{align*}
\int_{S^\nu} \int_{S^\nu} \log \frac{1}{|z-w|}\, d\sigma(z)\,d\sigma(w) & = \frac{1}{4} \int_{a_{\nu}}^{b_{\nu}} rq'(r)^2\, dr  +   M_{\nu-1}^2\log a_{\nu} - M_{\nu}^2\log b_{\nu} \\
&  + 2M_{\nu-1} (M_{\nu}\log b_{\nu} -M_{\nu-1} \log a_{\nu}) -  M_{\nu-1} (q(b_{\nu})-q(a_{\nu})).
\end{align*}

Also for $\nu < \eta$ we similarly obtain
\begin{align*}
\int_{S^\nu} &\int_{S^\eta} \log \frac{1}{|z-w|}\, d\sigma(z)\,d\sigma(w) = \frac{1}{2}(M_{\nu}-M_{\nu-1}) \Big( 2M_{\eta-1}\log a_{\eta}- 2M_{\eta}\log b_{\eta} + q(b_{\eta})-q(a_{\eta}) \Big).
\end{align*}

Summation over all relevant $\nu$ and $\eta$ completes the proof of the lemma.
\end{proof}

\subsection{The Euler-Maclaurin formula} In what follows, we often use the following
summation rule, which is found e.g.~in \cite[Section 2.10]{NIST} and \cite{BKS}.

\begin{thm}\label{em_formula} If $h$ is $2d$ times continuously differentiable on the interval $[m,n]$ where $n,m$ are positive integers, then
\begin{align*}
\sum_{j=m}^{n-1} h(j) = &\int_{m}^{n} h(x)\, dx - \frac{h(n)-h(m)}{2} + \sum_{k=1}^{d-1} \frac{B_{2k}}{(2k)!} \Big( h^{(2k-1)}(n)-h^{(2k-1)}(m)   \Big) + \epsilon_d.
\end{align*}
Here $B_{2k}$ is the $2k$-th Bernoulli number (in particular $B_2=\frac 1 6$)
and
\begin{align*}
|\epsilon_d| \leq \frac{4\zeta(2d)}{(2\pi)^{2d}} \int_{m}^n |h^{(2d)}(x)|\, dx,
\end{align*}
where $\zeta(s)$ is the Riemann zeta function.
\end{thm}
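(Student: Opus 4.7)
The plan is to derive this by iterated integration by parts on each unit subinterval $[j,j+1]$ using Bernoulli polynomials as antiderivatives, and then to telescope the boundary contributions when summing over $j$. The result is classical (see e.g.~\cite[Section 2.10]{NIST}); I only describe the structure.

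First I would record the basic facts about the Bernoulli polynomials $B_k(t)$: $B_0 \equiv 1$, $B_k'(t) = k B_{k-1}(t)$, $B_1(0) = -\tfrac{1}{2}$, $B_1(1) = \tfrac{1}{2}$, $B_k(0) = B_k(1) = B_k$ for $k \geq 2$, and $B_{2k+1} = 0$ for $k \geq 1$. The starting identity on a single unit interval,
$$\int_0^1 h(j+t)\, dt = \frac{h(j+1)+h(j)}{2} - \int_0^1 B_1(t)\,h'(j+t)\, dt,$$
follows from writing $1 = B_1'(t)$ and integrating by parts. I would then iterate this $2d-1$ further times using $B_k = \tfrac{1}{k+1} B_{k+1}'$; each step generates a boundary term, but those of odd index $2k+1 \geq 3$ vanish because $B_{2k+1} = 0$, while the surviving even-indexed contributions are precisely $\tfrac{B_{2k}}{(2k)!}\bigl(h^{(2k-1)}(j+1)-h^{(2k-1)}(j)\bigr)$.

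Summing these single-interval identities over $j=m,\ldots,n-1$ telescopes the boundary terms into $h^{(2k-1)}(n)-h^{(2k-1)}(m)$, and rearranging yields the stated formula with residual
$$\epsilon_d = -\frac{1}{(2d)!}\int_m^n \widetilde{B}_{2d}(x)\,h^{(2d)}(x)\, dx,$$
where $\widetilde{B}_{2d}(x) := B_{2d}(\{x\})$ denotes the $1$-periodic extension. The error estimate then follows from the classical Fourier expansion $\widetilde{B}_{2d}(x) = (-1)^{d+1}\tfrac{2(2d)!}{(2\pi)^{2d}}\sum_{k\geq 1} k^{-2d}\cos(2\pi k x)$, which gives $\|\widetilde{B}_{2d}\|_\infty \leq \tfrac{2(2d)!\,\zeta(2d)}{(2\pi)^{2d}}$, a bound slightly sharper than the one stated (constant $2$ versus $4$). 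There is no real obstacle: the argument is disciplined bookkeeping of boundary contributions, with all nontrivial content encoded in the classical identities for Bernoulli polynomials and their Fourier expansion.
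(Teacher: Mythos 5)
The paper does not actually prove this theorem; it is cited from \cite[Section 2.10]{NIST} and used as a black box, so there is no in-paper argument to compare against. Your approach---iterated integration by parts with the $1$-periodic Bernoulli polynomials, then telescoping over $j$---is the standard textbook derivation and is structurally sound.

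There is, however, a bookkeeping error in the last paragraph that affects the error estimate. If you start with $B_1$ and then iterate $2d-1$ further steps (reaching $\tilde B_{2d}$), the surviving boundary terms run over $B_2,B_4,\ldots,B_{2d}$; telescoping then gives a boundary sum $\sum_{k=1}^{d}$, not $\sum_{k=1}^{d-1}$ as in the theorem. Since the statement truncates the sum at $k=d-1$, the dangling $B_{2d}$ boundary contribution must be folded into $\epsilon_d$, yielding
\begin{align*}
\epsilon_d \;=\; \frac{B_{2d}}{(2d)!}\bigl(h^{(2d-1)}(n)-h^{(2d-1)}(m)\bigr)\;-\;\frac{1}{(2d)!}\int_m^n \widetilde B_{2d}(x)\,h^{(2d)}(x)\,dx
\;=\;\frac{1}{(2d)!}\int_m^n\bigl(B_{2d}-\widetilde B_{2d}(x)\bigr)h^{(2d)}(x)\,dx,
\end{align*}
which is precisely the NIST form. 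The relevant sup-norm bound is therefore on $B_{2d}-\widetilde B_{2d}(x)$, not on $\widetilde B_{2d}(x)$ alone. Using the Fourier expansion to get $|\widetilde B_{2d}(x)|\le|B_{2d}|=\tfrac{2(2d)!\,\zeta(2d)}{(2\pi)^{2d}}$ and the triangle inequality then gives $|B_{2d}-\widetilde B_{2d}(x)|\le 2|B_{2d}|=\tfrac{4(2d)!\,\zeta(2d)}{(2\pi)^{2d}}$, and this is where the factor $4$ comes from. Your claim that the constant can be tightened to $2$ is thus not correct for the theorem as stated; the sharper constant applies to the variant in which the boundary sum runs to $k=d$. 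With that one correction the argument is complete and matches the cited source.
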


\subsection{The Laplace method} \label{laplace_me} We will frequently approximate integrals using the following version of Laplace's method.

\begin{lem} \label{laplem} Let $f(r)$ and $g(r)$ be smooth ($C^4$ and $C^6$-smooth, respectively, will do)
functions for $r>0$ and consider the integral
\begin{equation}\label{funcn}I(n)=\int_0^\infty f(r)e^{-ng(r)}\, dr.\end{equation}

Suppose that $g(r)$ attains a local minimum at $r=r_0$, and that $f(r)$ is supported in a neighbourhood of $r=r_0$ which contains no other local minima of $g(r)$.
Also suppose $g''(r_0)> 0$ and $f(r_{0})\neq 0$. Then, as $n\to+\infty$,
\begin{align*}
I(n) = \sqrt{\frac{2\pi}{g''(r_0)n}} e^{-ng(r_0)} f(r_0) \big(1+\frac{a}{n} +\bigO(n^{-2})\big),
\end{align*}
where
\begin{align}
\label{ADdef}  a=-\frac 1 8 \frac {d_4}{d_2^2}+\frac 5 {24}\frac {d_3^2}{d_2^3} + \frac{1}{2} \frac{f''(r_{0})}{f(r_{0})}\frac 1 {d_2} - \frac{1}{2} \frac{f'(r_{0})}{f(r_{0})}\frac {d_3}{d_2^2},\qquad
d_\ell=g^{(\ell)}(r_0).
\end{align}
\end{lem}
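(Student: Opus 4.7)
The plan is to run the standard Laplace method carefully enough to retrieve the explicit subleading coefficient. First I would localize. Since $g''(r_0)>0$, $r_0$ is a strict local minimum, so there is a small $\delta>0$ with $g(r)-g(r_0)\ge \tfrac{d_2}{4}(r-r_0)^2$ on $(r_0-\delta,r_0+\delta)$; on the rest of $\supp f$, the hypothesis that no other local minimum of $g$ lies in the support gives a uniform lower bound $g(r)-g(r_0)\ge c_\delta>0$, so the contribution from $|r-r_0|\ge\delta$ is $O(e^{-nc_\delta})$ and absorbed in the error term. I then factor out $e^{-ng(r_0)}$ and change variables $u=r-r_0$.

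On $|u|<\delta$ I Taylor-expand
\begin{align*}
g(r_0+u)-g(r_0)&=\tfrac{d_2}{2}u^2+\tfrac{d_3}{6}u^3+\tfrac{d_4}{24}u^4+O(u^5),\\
f(r_0+u)&=f(r_0)+f'(r_0)u+\tfrac{1}{2}f''(r_0)u^2+O(u^3),
\end{align*}
pull out the leading Gaussian $e^{-nd_2u^2/2}$, and rescale $v=\sqrt{nd_2}\,u$. Modulo exponentially small terms this gives
\begin{align*}
I(n)=\frac{e^{-ng(r_0)}}{\sqrt{nd_2}}\int e^{-v^2/2}\, f\!\Bigl(r_0+\tfrac{v}{\sqrt{nd_2}}\Bigr)\,\exp\!\Bigl(\tfrac{X}{\sqrt n}+\tfrac{Y}{n}+O(n^{-3/2})\Bigr)\,dv,
\end{align*}
with $X:=-\tfrac{d_3 v^3}{6 d_2^{3/2}}$ and $Y:=-\tfrac{d_4 v^4}{24 d_2^2}$. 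Expanding the non-Gaussian exponential as $1+X/\sqrt n+(Y+\tfrac12 X^2)/n+O(n^{-3/2})$ and multiplying by the Taylor expansion of $f$, I collect terms up to order $1/n$. The $1/\sqrt n$ contributions are odd in $v$ (either $v\cdot 1$ or $1\cdot v^3$) and vanish.

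At order $1/n$ exactly four terms survive. Using the Gaussian moments $\tfrac{1}{\sqrt{2\pi}}\int v^{2k}e^{-v^2/2}dv=(2k-1)!!$ with values $1,3,15$ for $k=1,2,3$, they contribute respectively:  $f(r_0)Y$ gives $-\tfrac{1}{8}d_4/d_2^2$; the crucial $\tfrac12 f(r_0)X^2$ (a $v^6$ integral with coefficient $d_3^2/(72 d_2^3)$ and moment $15$) gives $+\tfrac{5}{24}d_3^2/d_2^3$; the cross term $f'(r_0)\,vX/\sqrt{d_2}$ (a $v^4$ integral) gives $-\tfrac12(f'(r_0)/f(r_0))(d_3/d_2^2)$; and $\tfrac12 f''(r_0)v^2/d_2$ gives $+\tfrac12(f''(r_0)/f(r_0))(1/d_2)$. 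Dividing by $f(r_0)$ this is precisely $a$ of \eqref{ADdef}, and the overall prefactor $\sqrt{2\pi/(nd_2)}\,f(r_0)$ reproduces the stated leading term.

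The main obstacle is the bookkeeping of the cubic-squared contribution: although the cubic piece $X/\sqrt n$ in the exponent is itself of order $n^{-1/2}$ and odd in $v$, its square $\tfrac12 X^2/n$ contributes at the same order $1/n$ as the quartic $Y/n$, and similarly the cross term $f'\cdot X$ cannot be dropped. To control the error uniformly, one uses that $f$ is $C^4$ and $g$ is $C^6$ on the support, so that the Taylor remainders are bounded by $u^5$ (for $g$) and $u^3$ (for $f$) with uniform constants; after the rescaling these produce contributions of order $n^{-3/2}$ and hence $O(n^{-2})$ once integrated and combined with the surviving oddness. The exponentially small tails from $|v|\ge\delta\sqrt{nd_2}$ are dominated by the Gaussian and absorbed into $O(n^{-2})$.
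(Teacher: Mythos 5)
Your proposal is correct and implements exactly the strategy the paper invokes (Taylor's formula plus Gaussian moments) but declines to write out, instead pointing to \cite{BKS} for the case $f(r)=r$ and to \cite{S} for $f\equiv 1$. I verified the four order-$1/n$ contributions against the Gaussian moments $1,3,15$: your $f_0 Y$, $\tfrac12 f_0 X^2$, $f_1 v X/\sqrt{d_2}$, and $\tfrac12 f_2 v^2/d_2$ terms reproduce \eqref{ADdef} exactly, and your accounting of why $C^4$ for $f$ and $C^6$ for $g$ suffice (the $n^{-3/2}$ terms die by oddness, pushing the remainder to $O(n^{-2})$) is the right explanation.
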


\begin{proof} The special case $f(r)=r$ is worked out in \cite{BKS}, and the general case can be proved similarly, using Taylor's formula and the moments of the Gaussian distribution.
(Also, the special case $f(r)=1$ is given in the remark following
\cite[Theorem 15.2.5]{S}, for example.) We omit details.
\end{proof}

Now fix a number $\tau$ with $0\le \tau\le 1$ and put $g=g_\tau$ where
\begin{equation}\label{gtaudef}
 g_\tau(r):=q(r)-2\tau\log r.
\end{equation}

Let $I_\tau(n)$ denote the integral \eqref{funcn} with $g=g_\tau$ and $f(r)=2r^{1+2\alpha}e^{sh(r)}$, i.e.,
\begin{equation}\label{itaun}
I_\tau(n)=2\int_0^\infty r^{1+2\alpha}e^{sh(r)}e^{-ng_\tau(r)}\, dr.
\end{equation}

The definition is chosen so that $I_\tau(n)=\|p_j\|^2$ if $\tau=j/n$, see \eqref{w-op}.

If $r=r_\tau$ is a solution to $g_\tau'(r)=0$, and if $Q$ is smooth at $r$, then (since $4\Delta Q=q''+r^{-1} q'$)
\begin{equation}\label{dtwo}
r q'(r) = 2\tau, \qquad g_\tau''(r)=4\Delta Q(r).
\end{equation}

The solutions $r=r_\tau$, which give local minima for $g_\tau$, will be called \textit{local peak points}.

Recall that $S^*$ denotes the coincidence set. For asymptotic purposes, it suffices to study solutions of \eqref{dtwo} in some (relatively) open neighbourhood $\calN$ in $[0,\infty)$ of the set $S^*\cap[0,\infty)$. We take $\calN$ small enough so
that $Q$
is $C^6$-smooth and strictly subharmonic on the set $\{ z=re^{i\theta}: r\in\overline{\calN} , 0\leq \theta<2\pi\}$.

The totality of local peak points in $\calN$ is denoted
\begin{equation}\label{locpeak}
\LocPeak(\tau)=\{r\in\calN\, ;\, g'_{\tau}(r)=0\}.
\end{equation}

Note that all points in $\LocPeak(\tau)$ are strict local minima, since $g_\tau''(r_\tau)=4\Delta Q(r_\tau)>0$.
As a consequence  there is at most one local peak point $r_{\tau}$ in the vicinity of a given connected component $C$ of $S^*\cap [0,\infty)$. Furthermore, this point depends smoothly on $\tau$.

Differentiating with respect to $\tau$ in $r_\tau q'(r_\tau)=2\tau$, we deduce the following lemma.

\begin{lem}\label{evolve} Let $C$ be a given connected component of $S^*\cap [0,\infty)$ and let $\mathcal{U}$ be a neighbourhood of $C$. If $\mathcal{U}$ is small enough, then
a solution to $rq'(r)=2\tau$ such that $r\in \mathcal{U}$ is unique. We denote it $r_{C,\tau}$ and it obeys the differential equation
\begin{align*}
\frac {dr_{C,\tau}}{d\tau}=\frac 1 {2r_{C,\tau} \Delta Q(r_{C,\tau})}.
\end{align*}
\end{lem}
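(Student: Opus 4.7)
The plan is to first establish strict monotonicity of the map $r\mapsto rq'(r)$ on a neighbourhood of $C$, which yields uniqueness, and then obtain the ODE by implicit differentiation of the defining relation $r_{C,\tau}\,q'(r_{C,\tau})=2\tau$.

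For the monotonicity, I would use that $Q$ is radially symmetric, so (with our normalization $\Delta=\partial\bar\partial=\frac14(\partial_{xx}+\partial_{yy})$) one has $4\Delta Q(r)=q''(r)+q'(r)/r$. Multiplying by $r$ gives the key identity
\begin{equation*}
\frac{d}{dr}\bigl(r q'(r)\bigr)=q'(r)+r q''(r)=4r\Delta Q(r).
\end{equation*}
Since $Q$ is strictly subharmonic in a neighbourhood of $S^*$ by hypothesis, $\Delta Q>0$ on some open neighbourhood of $C$. Hence for a sufficiently small $\mathcal{U}$, the right-hand side above is strictly positive throughout $\mathcal{U}\cap(0,\infty)$, so $r\mapsto rq'(r)$ is strictly increasing there; this gives uniqueness of $r_{C,\tau}$. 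In the central-disk case where $C\ni 0$, one notes that $rq'(r)=0$ forces $r=0$ near the origin (since $q'(r)>0$ for small $r>0$ by the same positivity argument), which handles that boundary case.

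For the ODE, I would apply the implicit function theorem to $F(r,\tau):=rq'(r)-2\tau$. The partial derivative $\partial_r F=4r\Delta Q(r)$ is nonzero (and positive) on $\mathcal{U}$, so $\tau\mapsto r_{C,\tau}$ is smooth, and differentiating $r_{C,\tau}\,q'(r_{C,\tau})=2\tau$ with respect to $\tau$ yields
\begin{equation*}
\frac{dr_{C,\tau}}{d\tau}\cdot 4\,r_{C,\tau}\Delta Q(r_{C,\tau})=2,
\end{equation*}
which rearranges to the claim.

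There is no serious obstacle here; the proof reduces essentially to the single identity $\frac{d}{dr}(rq'(r))=4r\Delta Q(r)$, combined with the strict-subharmonicity hypothesis. The only minor delicacy is ensuring that $\mathcal{U}$ is chosen small enough both so that it stays inside the region where $\Delta Q>0$ and, in case $0\in C$, so that uniqueness holds across $r=0$; both are immediate from the hypotheses.
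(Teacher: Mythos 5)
Your proposal is correct and follows essentially the same route as the paper: the paper obtains uniqueness from the fact that $g_\tau''(r_\tau)=4\Delta Q(r_\tau)>0$ at every critical point near $S^*$ (i.e., all local peaks are strict minima), which is the same positivity statement as your $\tfrac{d}{dr}(rq'(r))=4r\Delta Q(r)>0$, and then the paper deduces the ODE by exactly the implicit differentiation of $r_\tau q'(r_\tau)=2\tau$ that you carry out. Your phrasing via strict monotonicity of $r\mapsto rq'(r)$ is if anything a slightly cleaner packaging of the uniqueness step, but the underlying idea is identical.
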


We have the following main approximation lemma.

\begin{lem} \label{explicit1} For each connected component $C$ of $S^*\cap [0,\infty)$ such that there is a solution to $g'_\tau(r)=0$ with $r$ near $C$, we write $r=r_{C,\tau}$ for that solution.
 Then as $n\to\infty$ (with $f(r)=2r^{1+2\alpha}e^{sh(r)}$)
\begin{equation*}
I_\tau(n)=\sum_{C}\sqrt{\frac{2\pi} n}\frac {1}{\sqrt{4\Delta Q(r_{C,\tau})}} e^{-ng_\tau(r_{C,\tau})}f(r_{C,\tau})  \Big(1+\frac {a_{C,\tau}}n+\bigO(n^{-2})\Big),
\end{equation*}
where the sum extends over all such components and the error term is uniform in $\tau\in [0,1]$.

Here $a_{C,\tau}$ is given by \eqref{ADdef} with $g=g_\tau$, $r_0=r_{C,\tau}$. The first few coefficients $d_\ell$ are:
\begin{equation}\label{d234}\begin{cases}
d_{2} &= 4\Delta Q(r_{C,\tau}),\cr
d_3 &= 4 \partial_r \Delta Q(r_{C,\tau}) -\frac{4}{r_{C,\tau}} \Delta Q(r_{C,\tau}),\cr
d_4 &= 4 \partial_r^2 \Delta Q(r_{C,\tau}) + \frac{12}{r_{C,\tau}^2} \Delta Q(r_{C,\tau}) - \frac{4}{r_{C,\tau}} \partial_r \Delta Q(r_{C,\tau}).\cr
\end{cases}
\end{equation}
\end{lem}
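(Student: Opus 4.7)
The plan is to reduce the lemma to Lemma \ref{laplem} via a partition of unity, and then compute the derivatives $d_\ell=g_\tau^{(\ell)}(r_{C,\tau})$ explicitly in terms of $\Delta Q$.

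First, I would choose a smooth partition of unity $\{\chi_C\}_C \cup \{\chi_\infty\}$ on $[0,\infty)$, where each $\chi_C$ is supported in a small neighbourhood $U_C \subset \calN$ of the unique local peak point $r_{C,\tau}$ attached to the component $C$ (such a neighbourhood exists by Lemma \ref{evolve}), and $\chi_C \equiv 1$ in a smaller neighbourhood. Then
\begin{equation*}
I_\tau(n)=\sum_C I_{\tau,C}(n)+R_\tau(n),\qquad I_{\tau,C}(n):=2\int_0^\infty\chi_C(r)r^{1+2\alpha}e^{sh(r)}e^{-ng_\tau(r)}\,dr,
\end{equation*}
and $R_\tau(n)$ is the analogous integral weighted by $\chi_\infty$. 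Applying Lemma \ref{laplem} to each $I_{\tau,C}(n)$ with $g=g_\tau$ and $f(r)=2r^{1+2\alpha}e^{sh(r)}\chi_C(r)$ (noting that near $r_{C,\tau}$ we have $\chi_C\equiv 1$, so only the derivatives of $2r^{1+2\alpha}e^{sh(r)}$ at $r_{C,\tau}$ enter the coefficient $a_{C,\tau}$) yields the desired contribution, modulo $R_\tau(n)$.

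Second, I would show that $R_\tau(n)$ is exponentially smaller than any of the Laplace terms. On $\supp\chi_\infty\cap \calN$, the function $g_\tau$ has no critical points and lies strictly above $\min_C g_\tau(r_{C,\tau})$ by compactness. On the complement of $\calN$ we have $Q>\check Q$, and since the peak points lie in $S^*=\{Q=\check Q\}$ we may write $g_\tau(r)=Q(r)-2\tau\log r \ge \check Q(r)-2\tau\log r$, while $g_\tau(r_{C,\tau})=\check Q(r_{C,\tau})-2\tau\log r_{C,\tau}$; combined with the confinement $Q(r)-2\log r\to+\infty$ this gives a uniform gap $g_\tau(r)\ge \min_C g_\tau(r_{C,\tau})+\epsilon$ on $\supp\chi_\infty$. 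Hence $R_\tau(n)=\bigO(e^{-n\epsilon})$ times the leading Laplace term, which is absorbed in the $\bigO(n^{-2})$ correction.

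Third, to identify the coefficients I would compute directly. Since $g_\tau(r)=q(r)-2\tau\log r$ and $r_{C,\tau}q'(r_{C,\tau})=2\tau$, differentiation gives
\begin{align*}
g''_\tau(r)=q''(r)+\frac{2\tau}{r^2},\qquad g'''_\tau(r)=q'''(r)-\frac{4\tau}{r^3},\qquad g^{(4)}_\tau(r)=q^{(4)}(r)+\frac{12\tau}{r^4},
\end{align*}
and substituting $2\tau=rq'(r)$ at $r=r_{C,\tau}$ yields $d_2=q''+q'/r$, $d_3=q'''-2q'/r^2$, $d_4=q^{(4)}+6q'/r^3$, all evaluated at $r_{C,\tau}$. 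The radial identity $4\Delta Q=q''+q'/r$ together with one and two differentiations in $r$ then gives the formulas in \eqref{d234}, after which \eqref{ADdef} determines $a_{C,\tau}$.

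The routine part is the Taylor expansion inside Lemma \ref{laplem} and the algebraic rewriting of the $d_\ell$; the only delicate point is the uniform bound on $R_\tau(n)$, which I expect to be the main obstacle if one later needs uniformity in $\tau$ (e.g.\ when summing over $\tau=j/n$). For the present pointwise statement, the gap argument above suffices.
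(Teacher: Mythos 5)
Your approach is the same as the paper's: localize via a partition of unity, apply Lemma~\ref{laplem} on each piece, and compute the $d_\ell$ from $g_\tau = q - 2\tau\log r$ using the critical-point identity $r q'(r) = 2\tau$ and the radial formula $4\Delta Q = q'' + q'/r$. Your derivative bookkeeping checks out: at $r = r_{C,\tau}$ one has $d_2 = q'' + q'/r$, $d_3 = q''' - 2q'/r^2$, $d_4 = q^{(4)} + 6q'/r^3$, and the translation to $\Delta Q$ gives exactly~\eqref{d234}. The paper dispenses with the tail bound by citing a standard estimate (\cite[Lemma 2.1]{A}), whereas you attempt to prove it from scratch, and that attempt has a flaw: the claim that the local peak points $r_{C,\tau}$ lie in $S^* = \{Q = \check Q\}$, hence that $g_\tau(r_{C,\tau}) = \check Q(r_{C,\tau}) - 2\tau\log r_{C,\tau}$, is not correct in general. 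For $\tau$ away from the branching values $M_\nu$, a local peak $r_{C,\tau}$ attached to a droplet component may sit strictly outside $S^*$ (it tracks the level set $rq'(r)=2\tau$, not the coincidence set). The right comparison is with the $\tau$-scaled obstacle function $\check Q_\tau$ introduced in Subsection~\ref{lapg}: one has $g_\tau(r) - B_\tau = Q(r) - V_\tau(r) \ge (Q - \check Q_\tau)(r)$, and the bound $(Q-\check Q_\tau)(r)\ge c>0$ off the neighbourhood $\calN$ (uniformly in $\tau$) is precisely the content of the cited lemma. Replacing $\check Q$ with $\check Q_\tau$ and dropping the incorrect claim about $r_{C,\tau}\in S^*$ repairs your argument; as it stands, the step you flagged as delicate is indeed where the proposal is not quite right.
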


\begin{proof}

By a standard estimate in e.g.~\cite[Lemma 2.1]{A}, we have, as $n\to\infty$
$$I_\tau(n)=(1+\bigO(e^{-cn}))\int_\calN f(r)e^{-ng_\tau(r)}\, dr.$$

Now use a partition of unity to write $f=\sum_C f_C+f_0$ where $f_C$ is supported in a small neighbourhood of component $C$ and $f_0$ vanishes on a neighbourhood of $S^*\cap[0,\infty)$; all functions are smooth.
As $n\to\infty$, we have  $I_\tau(n)=(1+\bigO(e^{-c'n}))\sum_C I_{C,\tau}(n)$ where $c'>0$ and $I_{C,\tau}(n)$ is defined as in \eqref{itaun} with $f$ replaced by $f_C$. The computations of the coefficients $d_3,d_4$ corresponding to $f_C$ are straightforward from Lemma \ref{laplem}, leading to the identities \eqref{d234}. The implied constant from Lemma \ref{laplem} depends smoothly on $\tau\in [0,1]$, which proves that the error term is indeed uniform.
\end{proof}

\subsection{Discussion of local peak points} \label{lapg} It is expedient to discuss the structure of those local peaks giving the essential contribution to the integral \eqref{itaun} for large $n$. This discussion requires a bit of book-keeping, but is otherwise quite elementary.

For each $\tau$, $0<\tau\le 1$, we let $S_\tau$ be the droplet and  $S_\tau^*$ the coincidence set with respect to the potential $Q/\tau$. Let us write $U_\tau$ for the unbounded component of $\C\setminus S_\tau$ and $\Gamma_\tau:=\{|z|=\beta_\tau\}$ for the boundary of $U_\tau$. The evolution $\tau\mapsto S_\tau$ is known as Laplacian growth.

It is important to note that $r=\beta_\tau$ solves the equation $rq'(r)=2\tau$, and thus $\beta_\tau$ is always a local peak point in the sense above.

To see this, we introduce the obstacle function $\check{Q}_\tau$ defined by the obstacle $Q$ but with growth $2\tau\log|z|+\bigO(1)$ near infinity. To be precise,
$\check{Q}_\tau(z)$ is the supremum of $s(z)$ where $s(w)$ is subharmonic on $\C$ with $s\le Q$ and $s(w)\le 2\tau\log|w|+\bigO(1)$ as $w\to\infty$.

 It is easy to see that $\check{Q}_\tau(z)$ is harmonic in $U_\tau$, where is takes the form
 \begin{equation}\label{har_obst}
 \check{Q}_\tau(z)=2\tau\log|z|+B_\tau,
 \end{equation}
 for some constant $B_\tau$. Moreover $\check{Q}_\tau(z)$ is globally $C^{1,1}$-smooth and coincides with $Q$ on $S_\tau$. Considering the derivative of $Q-\check{Q}_\tau$ in the radial direction, one verifies that $\beta_\tau q'(\beta_\tau)=2\tau$.

If we extend $\beta_\tau$ to $\tau=0$ by $\beta_0=a_0$, then $\tau\mapsto \beta_\tau$ is strictly increasing, continuous from the left, and satisfies
$\sigma(\D_{\beta_\tau})=\tau$.

The points of discontinuity of $\beta_\tau$ are precisely the points $\tau=M_\nu=\sigma(\D_{b_\nu})$ in \eqref{mass} for $0\le \nu\le N-1$; we have $\beta_{M_\nu}=b_\nu$
 while the right hand limit $\beta_{M_\nu}^+=\lim_{\tau \downarrow M_\nu}\beta_\tau$ equals to $a_{\nu+1}$.

In the following we denote by
$$V_\tau(z):=2\tau\log|z|+B_\tau$$
the harmonic continuation of $\check{Q}_\tau\big|_{U_\tau}$ to $\C\setminus\{0\}$.
Thus $(Q-V_\tau)(z)=g_\tau(|z|)-B_\tau$.

Note that, by the maximum principle, we have the inequality $V_{\tau} \leq \check{Q}_\tau$.

We now define a set $\Peak(\tau)$ of ``global peak points'' by
$$\Peak(\tau):=\{r\ge 0\,;\, g_\tau(r)=B_\tau\}.$$

Since $(Q-V_\tau)(\beta_\tau)=(Q-\check{Q}_\tau)(\beta_\tau)=0$, we always have that
$\beta_\tau\in \Peak(\tau)$. In a similar way, we see that, $S_\tau^* \cap[\beta_{\tau},\infty) \subset \Peak(\tau)$.

\begin{lem} \label{lamm} For each $\tau$ with $0< \tau\le 1$ we have
$\Peak(\tau)=S_\tau^* \cap[\beta_{\tau},\infty).$

In particular, if $0<\tau<1$ and if $\Peak(\tau)$ consists of more than one point, then $\tau=M_\nu$ for some $\nu$, $0\le\nu\le N-1$, and then
$\Peak(\tau)$ consists of the points $b_\nu,a_{\nu+1}$, and possibly finitely many shallow points $r$ in the gap $b_\nu<r<a_{\nu+1}$.

Also, $\Peak(1)$ consists of the point $\beta_1=b_N$ and all possible shallow points $r>b_N$; $\Peak(0)$ consists of the origin in the central disk case, and $a_0$ and possible shallow points $r$ with $0\le r<a_0$ in the annular case.
\end{lem}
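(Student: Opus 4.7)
The plan is to translate the statement into a coincidence problem via the identity
\[
g_\tau(r)-B_\tau = Q(r)-V_\tau(r),
\]
which follows from $B_\tau = g_\tau(\beta_\tau)$ and the definition of $V_\tau$. The first step is to observe that $V_\tau(z) = 2\tau\log|z| + B_\tau$ is globally subharmonic with the right growth at infinity, and so is a competitor in the definition of $\check{Q}_\tau$; hence $V_\tau\le\check{Q}_\tau\le Q$ globally. This gives $g_\tau \ge B_\tau$, so that $\Peak(\tau)$ is the zero set of a nonnegative function. Moreover, $\check{Q}_\tau$ is harmonic on $U_\tau$ with explicit radial formula $2\tau\log r + B_\tau$ there, so by continuity $V_\tau\equiv\check{Q}_\tau$ on the closure $\{|z|\ge\beta_\tau\}$.

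With this in place, if $r\in\Peak(\tau)$ then $V_\tau(r) = Q(r)$, and pinching through $V_\tau\le\check{Q}_\tau\le Q$ forces $r\in S_\tau^*$; conversely, for $r\in S_\tau^*\cap[\beta_\tau,\infty)$, both $V_\tau(r)=\check{Q}_\tau(r)$ and $\check{Q}_\tau(r)=Q(r)$ hold, so $r\in\Peak(\tau)$. It remains to exclude $r_0\in\Peak(\tau)\cap(0,\beta_\tau)$. Any such $r_0$ would be a global minimum of the smooth function $g_\tau$, giving $g_\tau'(r_0)=0$, i.e., $\Phi(r_0)=\tau$ where $\Phi(r):=rq'(r)/2=\int_0^r 2s\Delta Q(s)\,ds$. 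But $r_0\in S_\tau^*$ also, so $r_0$ lies either in a non-outer component of $S_\tau$ or is an inner shallow point of $Q/\tau$. In the first case, combining the formula for $\Phi$ with the vanishing of the inner gap integrals (a consequence of $C^{1,1}$ matching of $\check{Q}_\tau$ at component boundaries) gives $\Phi(r_0)=\tau\,\sigma_\tau(\{|z|\le r_0\})<\tau$, since further $\sigma_\tau$-mass lies beyond $r_0$. In the second case, $\check{Q}_\tau$ takes the form $2M_\mu\log r+\mathrm{const}$ in the relevant inner gap with $\mu<\nu$ so $M_\mu<\tau$, and the matching $q'(r_0)=2M_\mu/r_0$ yields $\Phi(r_0)=M_\mu<\tau$. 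Either way one reaches the contradiction $\Phi(r_0)<\tau=\Phi(r_0)$. The case $r_0=0$ is excluded by $g_\tau(0^+)=+\infty$ when $\tau>0$, giving $\Peak(\tau)=S_\tau^*\cap[\beta_\tau,\infty)$.

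The remaining case statements are then a matter of inspecting the coincidence set. At a jump time $\tau=M_\nu$ with $\nu\le N-1$, the key coincidence $B_{M_\nu}=D_\nu$, where $D_\nu$ is the constant in the formula $\check{Q}(r)=2M_\nu\log r+D_\nu$ on the original gap $(b_\nu,a_{\nu+1})$, implies that $\check{Q}_{M_\nu}$ on $(b_\nu,\infty)$ agrees with $\check{Q}$ there. Hence $a_{\nu+1}$ and every shallow point of $Q$ in $(b_\nu,a_{\nu+1})$ lies in $S_{M_\nu}^*\cap[b_\nu,\infty)=\Peak(M_\nu)$. For $\tau\in(M_{\nu-1},M_\nu)$ strict, $\Phi>\tau$ on $(\beta_\tau,b_\nu]$ makes $g_\tau$ strictly increasing there, and any outpost of $Q/\tau$ further out would require the alignment of $B_\tau$ with a resonance inherited from outposts of $Q$, which cannot occur in this open interval, leaving $\Peak(\tau)=\{\beta_\tau\}$. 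The cases $\tau=1$ and $\tau=0$ are handled similarly; for $\tau=0$ the growth constraint collapses $\check{Q}_0$ to the constant $\min q$, so $\Peak(0)$ is the set on which $q$ attains its global minimum, giving the stated description.

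The main obstacle is the exclusion of $r_0<\beta_\tau$ from $\Peak(\tau)$. The right trick is to combine two pieces of information on such an $r_0$---that it is a critical point of $g_\tau$ (forcing $\Phi(r_0)=\tau$) and that it lies in $S_\tau^*$ below $\beta_\tau$ (forcing $\Phi(r_0)<\tau$ via the gap/component structure of $\check{Q}_\tau$)---to derive an immediate contradiction.
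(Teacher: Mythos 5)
Your argument is correct and takes essentially the same route as the paper: use the pinching $V_\tau \le \check{Q}_\tau \le Q$ to reduce membership in $\Peak(\tau)$ to membership in $S_\tau^*$, and then exclude $r_0\in(0,\beta_\tau)$ by showing $r_0 q'(r_0)<2\tau$ (via $\sigma_\tau$-mass or the harmonic gap formula for $\check{Q}_\tau$), which contradicts $g_\tau'(r_0)=0$ at a global minimum—precisely the paper's contradiction \eqref{contr}. One small inaccuracy: $\check{Q}_{M_\nu}$ agrees with $\check{Q}$ only on the gap $(b_\nu,a_{\nu+1})$, not on all of $(b_\nu,\infty)$ (on subsequent droplet components $\check{Q}=Q$ while $\check{Q}_{M_\nu}$ lies strictly below), though the conclusion you draw about shallow points in the gap is still right since $g_{M_\nu}$ is strictly increasing past $a_{\nu+1}$.
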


\begin{proof} First assume $0<\tau<1$. Then $\beta_\tau>0$ and $g_\tau(\beta_\tau)=B_\tau$;
it remains only to prove that $\Peak(\tau)\subset S_\tau^* \cap[\beta_{\tau},\infty)$.

As before let $\calN$ be a small neighbourhood of $S^*\cap[0,\infty)$. If $r\not\in\calN$, we have $(Q-\check{Q}_\tau)(r)\ge c>0$ by \cite[Lemma 2.1]{A} (and its proof), so $\Peak(\tau)\subset \calN$.

 Since $V_\tau=\check{Q}_\tau$ on $[\beta_\tau,\infty)$ and $\check{Q}_\tau\le Q$ there with equality precisely on $S_\tau^* \cap[\beta_{\tau},\infty)$, it suffices to prove that there are no points $r\in\calN$ with $r<\beta_\tau$ at which $g_\tau(r)=B_\tau$.

However
if $r\in S_\tau^*$, $0<r<\beta_\tau$, and $g_\tau(r)=B_\tau$, then either $r=\beta_{\tau'}$ for some $\tau'<r$ or $r$ is a shallow point in a gap $b_\nu< r< a_{\nu+1}$ where $a_{\nu+1}<\beta_\tau$. In the first case
we have
$g_{\tau'}'(r)=0$, i.e., $rq'(r)=2\tau'<2\tau$, which shows that
\begin{equation}\label{contr}g_\tau'(r)=\frac {rq'(r)-2\tau} r=\frac {2(\tau'-\tau)} r< 0.\end{equation} In the second case we likewise have $g_{\tau'}'(r)=0$ with $\tau'=M_\nu$, so again \eqref{contr} holds.

Thus there are no solutions $r\in S_\tau^*$ with $r<\beta_\tau$
 to $g_\tau'(r)=0$. Moreover, \eqref{contr} shows that if the neighbourhood $\calN$ of $S^*\cap[0,\infty)$ is sufficiently small, there are no solutions $r<\beta_\tau$ in $\calN$ to the equation $g_\tau'(r)=0$.
The cases $\tau=0$ and $\tau=1$ are simple; we omit further details.
\end{proof}

Recall that $\LocPeak(\tau)$ denotes the set of local peak points in \eqref{locpeak}.

\begin{lem}\label{urban} With $\delta_\tau(r) = \dist(r,\LocPeak(\tau))$, we have the inequality (for all $r\ge 0$)
\begin{equation}\label{quadr}
g_{\tau}(r)-B_\tau \ge c \min\{\delta_\tau(r)^2,1\},
\end{equation}
where $B_\tau$ is defined in \eqref{har_obst} and $c>0$ is some constant independent of $\tau$, $0\le\tau\le 1$.
\end{lem}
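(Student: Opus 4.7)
The plan is to split $[0,\infty)$ into three regimes—close to $\LocPeak(\tau)$, bounded but far from $\LocPeak(\tau)$, and the large-$r$ tail—and handle each uniformly in $\tau\in[0,1]$. In the near regime a uniform quadratic Taylor expansion around local peaks delivers the bound; in the intermediate regime a compactness-and-contradiction argument combined with the implicit function theorem applied to $F(r):=rq'(r)$ supplies a positive lower bound; in the tail, $q(r)-2\log r\to+\infty$ together with the uniform boundedness of $B_\tau$ on $[0,1]$ (which follows from continuity of the obstacle problem in $\tau$) forces $g_\tau-B_\tau$ to be large.

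For the near regime the key identity is $g_\tau''(r_*)=4\Delta Q(r_*)\ge 4c_0>0$ at every $r_*\in\LocPeak(\tau)$, valid uniformly because $\Delta Q$ is continuous and strictly positive on a closed neighborhood $\overline{\calN}$ of $S^*\cap[0,\infty)$. Since $g_\tau'''$ is jointly continuous (hence bounded) on $\overline{\calN}\times[0,1]$, Taylor's theorem produces $\eps_0>0$ and $c_1>0$, independent of $(r_*,\tau)$, with $g_\tau(r)-g_\tau(r_*)\ge c_1(r-r_*)^2$ whenever $|r-r_*|\le\eps_0$. Choosing the components of $\calN$ with pairwise separation exceeding $3\eps_0$ ensures that the nearest peak $r_*$ to a given $r$ realizes $|r-r_*|=\delta_\tau(r)$; combined with $g_\tau(r_*)\ge B_\tau$, this gives the bound in this regime. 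For the intermediate regime, suppose for contradiction that there is a sequence $(r_n,\tau_n)\in[0,R]\times[0,1]$ with $\delta_{\tau_n}(r_n)\ge\eps_0$ and $g_{\tau_n}(r_n)-B_{\tau_n}\to 0$. Extracting a convergent subsequence $(r_n,\tau_n)\to(r^*,\tau^*)$ and using continuity gives $r^*\in\Peak(\tau^*)\subset\LocPeak(\tau^*)$ by Lemma \ref{lamm}. Since $F'(r^*)=4r^*\Delta Q(r^*)>0$ (or $F(r)\sim 2\Delta Q(0)r^2$ if $r^*=0$ in the central-disk case), the implicit function theorem provides a smooth branch $\tilde r(\tau)\in\LocPeak(\tau)$ with $\tilde r(\tau^*)=r^*$, whence $\delta_{\tau_n}(r_n)\le |r_n-\tilde r(\tau_n)|\to 0$, contradicting $\ge\eps_0$. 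Setting $c:=\min\{c_1,c_2\}$ for the resulting $c_2>0$, the three regimes combine to give the claimed inequality.

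The main obstacle is propagating uniformity in $\tau$ across the critical masses $\tau=M_\nu$, where $\beta_\tau$ jumps from $b_\nu$ to $a_{\nu+1}$ and $\LocPeak(\tau)$ changes discontinuously, so that $\delta_\tau(r)$ fails to be jointly continuous. The remedy is to take $\calN$ slightly larger than $S^*\cap[0,\infty)$, extending into adjacent gaps where $\Delta Q$ is still strictly positive; on each such component $\calN_i$ the function $F$ remains strictly increasing, and the implicit-function branch $F^{-1}(2\tau)\cap\calN_i$ exists on an interval of $\tau$ strictly containing $[M_{\nu-1},M_\nu]$, migrating smoothly into the gap portion of $\calN_i$ as $\tau$ crosses the transition. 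This renders the implicit-function step of the intermediate regime uniform across $\tau=M_\nu$. The subsidiary delicate point—the central-disk limit $\tau\to 0$, where the naive Taylor radius formally shrinks like $\beta_\tau\sim\sqrt{\tau}$ because $g_\tau'''$ blows up—is handled by the same branch argument with $\tilde r(\tau_n)=\beta_{\tau_n}\to 0$, or equivalently via the rescaling $\rho=r/\beta_\tau$ together with the elementary convex inequality $\rho^2-1-2\log\rho\ge c(\rho-1)^2$ on $\rho\in[\tfrac12,2]$.
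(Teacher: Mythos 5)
Your proof is correct, but it takes a genuinely different route from the paper in the intermediate and far regimes. The paper's argument is short and relies on obstacle-problem potential theory: for $r\notin\calN$ it invokes the estimate $(Q-\check{Q}_\tau)(r)\ge c>0$ (uniform in $\tau$, from the localization lemma in \cite{A}) together with the maximum-principle inequality $V_\tau\le\check{Q}_\tau$, which give $g_\tau(r)-B_\tau=(Q-V_\tau)(r)\ge(Q-\check{Q}_\tau)(r)\ge c$; the residual ``inside $\calN$ but far from $\LocPeak(\tau)$'' case is absorbed by the unimodality of $g_\tau$ on each component of $\calN$, since $r\mapsto rg_\tau'(r)=rq'(r)-2\tau$ is strictly increasing there. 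You instead run a compactness-and-contradiction argument: a bounded failure sequence $(r_n,\tau_n)$ would accumulate at a global peak $r^*\in\Peak(\tau^*)$ by Lemma~\ref{lamm}, and the smooth branch $\tilde r(\tau)$ of $F^{-1}(2\tau)$, $F(r)=rq'(r)$, through $r^*$ would force $\delta_{\tau_n}(r_n)\le|r_n-\tilde r(\tau_n)|\to 0$. Both mechanisms work. The paper's version exploits potential theory already in play and is more economical; yours is more self-contained, makes the uniformity in $\tau$ across the critical masses $M_\nu$ fully explicit, and handles directly the ``inside $\calN$ but far from a peak'' case that the paper's write-up leaves implicit.

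One internal inconsistency should be cleaned up. In the near-regime step you assert that $g_\tau'''$ is ``jointly continuous (hence bounded) on $\overline{\calN}\times[0,1]$.'' This is false whenever $0\in\overline{\calN}$ (the central-disk case $a_0=0$, or an outpost at the origin): since $g_\tau'''(r)=q'''(r)-4\tau r^{-3}$, the second term is unbounded as $r\to 0^+$ for any $\tau>0$, and the uniform Taylor radius degenerates as $\beta_\tau\sim\sqrt{\tau/\Delta Q(0)}\to 0$. You recognize exactly this two paragraphs later as the ``subsidiary delicate point'' and supply a correct remedy (the rescaling $\rho=r/\beta_\tau$ and the convexity inequality $\rho^{2}-1-2\log\rho\ge c(\rho-1)^{2}$), so the overall argument survives. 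But as written the first paragraph asserts a false uniform bound and then contradicts it; state the boundedness claim only for those components of $\calN$ bounded away from the origin, and route the central component through the rescaling from the start.
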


\begin{proof} For points close to $\LocPeak(\tau)$, the relation \eqref{quadr} follows from Taylor's formula: if $r\in\LocPeak(\tau)$,
\begin{equation*}
g_{\tau}(r+h)-B_\tau =d + 4\Delta Q(r)\frac {h^2} 2+\bigO(h^3),\qquad (h\to 0),
\end{equation*}
where $d=d(r)\ge0$.

	This implies that there is $c_0>0$ and $c>0$ (depending on the infimum of $\Delta Q$ over $\calN$) such that if $\dist(r,\LocPeak(\tau))<c_0$ then $g_\tau(r)-B_\tau \ge c\delta_{\tau}(r)^2$.
	
	As we already noted
there is $c>0$ such that $(Q-\check{Q}_\tau)(r)\ge c$ for $r\not\in\calN$. Since $V_\tau\le Q$, we conclude the proof.
\end{proof}

While global peak points give the largest contribution, there may also be some local peak points $r$ which contribute significantly to the integral $I_\tau(n)$. In view of Lemma \ref{explicit1}, this will happen if the value $g_\tau(r)$ is close to the minimum $B_\tau$.

To be specific, we set \begin{equation}\label{dnen}
\delta_n=C\frac {\log n} n,\qquad \eps_n=\sqrt{\delta_n},
\end{equation}
where $C$ is a large constant, and define the set of \textit{significant local peak points} to be
\begin{align}\label{def of SLP}
\SignPeak(\tau)=\SignPeak(\tau,n) & :=
 \{r\in\LocPeak(\tau)\, ; \, g_\tau(r)< B_\tau+\delta_n\}.
\end{align}

In the following, we will say that a number $\tau\in [0,1]$ is a \textit{branching value} if the peak set $\Peak(\tau)$ consists of at least two points. Evidently, the values $M_0,M_1,\ldots, M_{N-1}$ are branching values,
and these are all in the open interval $(0,1)$. The value $\tau=0$ is a branching value if there is a shallow outpost $|z|=c$ with $c<a_0$ and $\tau=1$ is a branching value if there is an outpost with $c>b_N$.

Combining Lemmas \ref{evolve}--\ref{urban} we obtain the following result.

\begin{lem}\label{tenta}
 $\SignPeak(\tau)$ consists of a single point $r=r_\tau$ when $\tau$ is sufficiently far away from the branching values, in the sense that $|\tau-M_\nu|\ge c>0$ for all $\nu$.
If $\tau$ is close to $M_\nu$, $|\tau-M_\nu|<c$, there might be several significant local peaks (the end-points $b_\nu,a_{\nu+1}$ and possibly some shallow points in between, if $0\le \nu\le N-1$).
\end{lem}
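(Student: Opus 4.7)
The plan is to combine Lemmas \ref{evolve}--\ref{lamm} with an elementary compactness argument on the parameter interval $[0,1]$. The overall strategy is to track, for each connected component of $S^*\cap[0,\infty)$, a single smoothly evolving local peak, and compare the values $g_\tau$ takes at these peaks with the global minimum $B_\tau$.

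First I would shrink $\calN$ so that it decomposes as a disjoint union $\bigsqcup_\nu \calN_\nu$ of small open neighborhoods of the connected components of $S^*\cap[0,\infty)$, and verify that $(rq'(r))'=q'(r)+rq''(r)=4r\Delta Q(r)>0$ on $\calN$. This monotonicity shows $r\mapsto rq'(r)$ is strictly increasing on each $\calN_\nu$, so $\LocPeak(\tau)\cap\calN_\nu$ consists of at most one point $r_{\nu,\tau}$, depending smoothly on $\tau$ where it exists (Lemma \ref{evolve}). I would then introduce the continuous non-negative functions $\phi_\nu(\tau):=g_\tau(r_{\nu,\tau})-B_\tau$. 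By construction $\phi_\nu(\tau)=0$ exactly when $r_{\nu,\tau}\in\Peak(\tau)$, so Lemma \ref{lamm} describes the zero set of the family $\{\phi_\nu\}$: for $\tau\notin\{M_0,\ldots,M_{N-1}\}$ exactly one $\phi_\nu$ vanishes, while at $\tau=M_\nu$ the indices corresponding to $b_\nu$, $a_{\nu+1}$, and any shallow outposts in the gap all vanish simultaneously.

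For the first claim, fix $c>0$ and let $K_c:=\{\tau\in[0,1]\,:\,|\tau-M_\nu|\ge c\text{ for all }\nu\}$, which is compact. Partition $K_c$ according to which component $\mu(\tau)$ contains the global peak $\beta_\tau$; on each piece $\phi_{\mu(\tau)}\equiv 0$ while $\phi_\nu>0$ strictly for $\nu\ne\mu(\tau)$. Continuity together with compactness yields a single $c'=c'(c)>0$ with $\phi_\nu(\tau)\ge c'$ for all $\tau\in K_c$ and all $\nu\ne\mu(\tau)$. Choosing $n$ large so that $\delta_n=C\log n/n<c'$ then forces $\SignPeak(\tau)=\{r_{\mu(\tau),\tau}\}$ for $\tau\in K_c$, proving the first claim.

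For the second claim, at $\tau=M_\nu$ each element of $\Peak(M_\nu)$ is a local peak $r_{\nu',M_\nu}$ with $\phi_{\nu'}(M_\nu)=0$. Continuity of $\phi_{\nu'}$ then gives $\phi_{\nu'}(\tau)<\delta_n$ throughout a neighborhood of $M_\nu$, placing the corresponding local peak in $\SignPeak(\tau)$. The only real obstacle is organizational rather than mathematical: one must verify that each significant local peak (particularly those at shallow points in a gap) really does persist as a smooth function of $\tau$ near $M_\nu$. This is however immediate from the implicit function theorem applied to the equation $rq'(r)=2\tau$ using $(rq'(r))'=4r\Delta Q(r)>0$.
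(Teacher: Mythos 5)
Your argument is correct and fills in a gap the paper deliberately leaves: the paper gives no written proof for this lemma beyond the remark that it follows by ``combining Lemmas \ref{evolve}--\ref{urban},'' and your proof supplies exactly the kind of continuity/compactness argument intended (uniqueness of the local peak near each component via Lemma \ref{evolve}, the structure of $\Peak(\tau)$ via Lemma \ref{lamm}, comparison of $g_\tau$ values with $B_\tau$). One technical point is worth tightening: the $\tau$-domain of each $\phi_\nu$ is a proper open subinterval of $[0,1]$ (the local peak $r_{\nu,\tau}$ eventually exits $\calN_\nu$ as $\tau$ moves away from $[M_{\nu-1},M_\nu]$), so the set on which you take a minimum is not automatically compact; the cleanest fix is to apply compactness to the continuous positive function $(\tau,r)\mapsto g_\tau(r)-B_\tau$ on $\overline{\calN_\nu}\times\bigl(K_c\cap\{\mu(\tau)\neq\nu\}\bigr)$, which is strictly positive there by Lemma \ref{lamm}, and then specialize to $r=r_{\nu,\tau}$. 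Also note that if $S^*\neq S$ the branching values may include $\tau=0$ or $\tau=1$, in which case the hypothesis should be read as distance $\ge c$ from all branching values, not just the $M_\nu$'s, so that your partition of $K_c$ by $\mu(\tau)$ remains well defined.
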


We will later improve on the lemma; the following statement
elucidates more precisely the state of affairs.

\begin{prop} \label{reader} If $|\tau-M_\nu|\ge C(\log n)/n$ for all branching values $M_\nu$ where $C$ is large enough, then
$\SignPeak(\tau)$ consists of a single point in the interior of $S$.
\end{prop}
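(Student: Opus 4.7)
The plan is to show that once $\tau$ is separated from every branching value by $C\log n/n$, the only significant local peak is the global one $\beta_\tau$, and that $\beta_\tau$ lies in the interior of $S$. The mechanism is to track, for each ``inactive'' local peak, the distance $g_\tau(r)-B_\tau$ as a function of $\tau$ via a simple derivative computation.

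First I would identify the active peak. Since $\tau$ lies strictly between consecutive branching values, say $\tau\in(M_{\nu-1},M_\nu)$, the global peak $\beta_\tau$ is the unique solution of $rq'(r)=2\tau$ in $S_\nu=A(a_\nu,b_\nu)$. By Lemma \ref{evolve}, $\tau\mapsto\beta_\tau$ is smooth with derivative $\frac{1}{2\beta_\tau\Delta Q(\beta_\tau)}$, bounded between positive constants on $S_\nu$. Since $\beta_{M_{\nu-1}^+}=a_\nu$ and $\beta_{M_\nu}=b_\nu$, integrating gives
\[
\min(\beta_\tau-a_\nu,\ b_\nu-\beta_\tau)\ \geq\ c_1\min(\tau-M_{\nu-1},\ M_\nu-\tau)\ \geq\ c_1 C\log n/n,
\]
which places $\beta_\tau$ strictly in the interior of $S$.

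Next I would rule out every other local peak. For a component $C$ of $S^*\cap[0,\infty)$ distinct from $S_\nu$ whose neighborhood still carries a local peak $r_{C,\tau}$, set $\Phi_C(\tau):=g_\tau(r_{C,\tau})-B_\tau\geq 0$. Applying the chain rule to both $g_\tau(r_{C,\tau})$ and $B_\tau=g_\tau(\beta_\tau)$, the stationarity conditions $g_\tau'(r_{C,\tau})=g_\tau'(\beta_\tau)=0$ cause the interior terms to vanish and leave
\[
\Phi_C'(\tau)\ =\ -2\log r_{C,\tau}-\frac{dB_\tau}{d\tau}\ =\ 2\log\frac{\beta_\tau}{r_{C,\tau}}.
\]
By Lemma \ref{lamm}, there is a branching value $\tau_C^{*}$ attached to $C$ at which $\Phi_C(\tau_C^{*})=0$: for a droplet component $S_{\nu'}$ one takes $\tau_C^{*}\in\{M_{\nu'-1},M_{\nu'}\}$ (whichever is nearer to $\tau$); for a shallow outpost between $S_\rho$ and $S_{\rho+1}$ one has $\tau_C^{*}=M_\rho$; and for outposts inside the central hole or outside the outer boundary, $\tau_C^{*}=0$ or $1$ respectively. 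Since $\beta_\tau$ stays inside $S_\nu$ while $r_{C,\tau}$ stays in a neighborhood of $C$, the quantity $|\log(\beta_\tau/r_{C,\tau})|$ admits a uniform positive lower bound $c_2$ on the relevant $\tau$-range. Integrating from $\tau_C^{*}$ to $\tau$ yields
\[
\Phi_C(\tau)\ \geq\ c_2|\tau-\tau_C^{*}|\ \geq\ c_2 C\log n/n.
\]
Choosing $C$ large enough so that $c_2 C$ exceeds the constant in the definition of $\delta_n$, this forces $\Phi_C(\tau)>\delta_n$, hence $r_{C,\tau}\notin\SignPeak(\tau)$. Combined with the previous step, $\SignPeak(\tau)=\{\beta_\tau\}$ and $\beta_\tau$ is interior to $S$.

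The hard part will be verifying that the constant $c_2$ above is genuinely uniform in $\tau$ and $n$, and enumerating which components $C$ actually contribute a local peak for a given $\tau$. The key input is that $r\mapsto rq'(r)$ is strictly increasing on each connected piece of $\calN$ (since $\Delta Q>0$ there), which forces a shallow outpost $C$ to contribute a local peak only for $\tau$ in a small neighborhood of $\tau_C^{*}$, and keeps $r_{C,\tau}$ near $C$ for such $\tau$; together with the fact that $\beta_\tau$ stays within the active component, this gives the required uniform separation of $\beta_\tau$ from $r_{C,\tau}$.
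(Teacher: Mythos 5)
Your argument is correct and follows essentially the same route the paper outlines: it rests on the ODE for the local peak $r_{C,\tau}$ from Lemma \ref{evolve}, on Lemma \ref{lamm} to identify $B_\tau=g_\tau(\beta_\tau)$ and to locate the branching value $\tau_C^*$ where $\Phi_C$ vanishes, and on the separation $\beta_\tau\not\approx r_{C,\tau}$ to get a definite rate. Your identity $\Phi_C'(\tau)=2\log(\beta_\tau/r_{C,\tau})$ is precisely the differential form of the paper's estimate \eqref{stock} from Lemma \ref{estimat}, which is what Lemma \ref{onepk} uses; you simply apply it uniformly to droplet components and outposts rather than case-by-case, and you add the observation that $\beta_\tau$ stays interior to $S$ once $\tau$ is separated from the branching values. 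This is a clean way to supply the details the paper declines to write out, and no gap is visible; the only point worth making explicit in a final write-up is that for each candidate component $C$ the local peak $r_{C,\tau}$ can remain in the neighbourhood $\calN$ only for $\tau$ in a bounded window around $\tau_C^*$ (since $r\mapsto rq'(r)$ is strictly increasing on $\calN$), so the sign of $\Phi_C'$ and the lower bound $c_2$ are controlled there.
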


\begin{proof}[Remark on the proof] As we will not need the full generality, we do not give complete details.
We prove the special case without shallow outposts in Lemma \ref{onepk} below.
The key technical ideas are just the same in the case with one or several outposts, as is noted in Section \ref{Sec_shallow}. Using these remarks, it is not difficult to supply a complete proof.
\end{proof}

In what follows, we often refer to points of $\SignPeak(\tau)$ as the ``significant solutions to $g_\tau'(r)=0$''. When analyzing $I_\tau(n)$ (recall \eqref{itaun}), we must take care to include all significant solutions, but it will not matter if we include some additional ones from the set $\LocPeak(\tau)\setminus \SignPeak(\tau)$, since their contribution will anyway be negligible in comparison.

\begin{lem} \label{loc_lem}
For each $\tau$, $0\le\tau\le 1$, define
$$J_\tau=J_{\tau,n}:=\{r\ge 0\,;\, \dist(r,\SignPeak(\tau))<\eps_n\}$$
where $\eps_n$ is as in \eqref{dnen}. Also write
\begin{equation*}I_\tau^\sharp(n):= 2\int_{J_{\tau}}r^{1+2\alpha}e^{sh(r)}e^{-ng_{\tau}(r)}\, dr.\end{equation*}
Then if $C$ is large enough, the integral \eqref{itaun} satisfies
\begin{align*}
I_\tau(n)=I_\tau^\sharp(n) \cdot (1+\bigO(n^{-100})),
\end{align*}
where the error term is uniform for $0\le \tau\le 1$ and all real $s$ with $|s|\le \log n$.

\end{lem}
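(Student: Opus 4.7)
The plan is to split $[0,\infty)$ into three pieces: $J_\tau$ itself, the intermediate region $\calN\setminus J_\tau$, and the far region $[0,\infty)\setminus \calN$ (where $\calN$ is the neighbourhood of $S^{*}\cap[0,\infty)$ fixed in Section \ref{laplace_me}). By definition the first piece contributes exactly $I_\tau^\sharp(n)$, so it suffices to bound the other two by a fraction $\bigO(n^{-100})$ of $I_\tau^\sharp(n)$.

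On the far region, I would use \cite[Lemma 2.1]{A} together with $V_\tau\le\check Q_\tau$ to get $g_\tau(r)-B_\tau\ge c>0$ uniformly in $\tau\in[0,1]$. Combined with the confining growth $Q(r)-2\log r\to+\infty$, which absorbs the term $-2\tau\log r$ and forces integrability at infinity, and with $e^{sh(r)}\le n^{\|h\|_\infty}$ for $|s|\le \log n$, this piece is trivially bounded by $e^{-cn/2}e^{-nB_\tau}$ and is exponentially negligible.

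On the intermediate region $\calN\setminus J_\tau$, Lemma \ref{urban} gives $g_\tau(r)-B_\tau\ge c\min\{\delta_\tau(r)^2,1\}$ where $\delta_\tau(r)=\dist(r,\LocPeak(\tau))$. I would distinguish two subcases. If $\delta_\tau(r)\ge\eps_n$, the bound immediately yields $g_\tau(r)-B_\tau\ge c\eps_n^2=c\delta_n=cC(\log n)/n$. Otherwise, some $r_0\in\LocPeak(\tau)$ lies within $\eps_n$ of $r$; since $r\notin J_\tau$, necessarily $r_0\in\LocPeak(\tau)\setminus\SignPeak(\tau)$, which by \eqref{def of SLP} means $g_\tau(r_0)\ge B_\tau+\delta_n$. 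As $r_0$ is a strict local minimum with $g_\tau''(r_0)=4\Delta Q(r_0)>0$, Taylor expansion at $r_0$ gives $g_\tau(r)\ge g_\tau(r_0)\ge B_\tau+\delta_n$. Thus in either subcase $e^{-n(g_\tau(r)-B_\tau)}\le n^{-c'C}$ throughout $\calN\setminus J_\tau$, whose effective length is $\bigO(1)$.

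To close, I would pick any component meeting $\SignPeak(\tau)$ (this is non-empty since $\beta_\tau\in\Peak(\tau)\subset\SignPeak(\tau)$) and apply Lemma \ref{explicit1} to obtain the crude lower bound $I_\tau^\sharp(n)\ge c\,n^{-1/2}e^{-nB_\tau}$. Comparing with the upper bounds above yields $I_\tau(n)-I_\tau^\sharp(n)=\bigO(n^{K-c'C})\,I_\tau^\sharp(n)$, with $K$ a fixed exponent depending only on $\|h\|_\infty$ and $\alpha$; this is $\bigO(n^{-100})\,I_\tau^\sharp(n)$ as soon as $C$ in \eqref{dnen} is chosen large enough. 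All constants are independent of $\tau\in[0,1]$ and $|s|\le\log n$, giving the stated uniformity. The main obstacle is the second subcase on the intermediate region: one must notice that being at distance at least $\eps_n$ from $\SignPeak(\tau)$ while lying near \emph{some} other local peak forces that peak to be non-significant, and hence to enjoy the quantitative floor $g_\tau\ge B_\tau+\delta_n$ which then propagates to a full $\eps_n$-neighbourhood by the strict local minimum structure.
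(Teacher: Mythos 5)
Your proof is correct and is essentially the argument the paper has in mind — the paper simply records that the lemma is ``immediate from Lemma \ref{urban} and the definition \eqref{def of SLP} of $\SignPeak(\tau)$'', and your write-up is exactly that argument spelled out, with the far region handled via \cite[Lemma 2.1]{A} and the intermediate region via the two subcases (far from any local peak vs.\ close to an insignificant one). The only point worth tightening is the lower bound on $I_\tau^\sharp(n)$: rather than invoking Lemma \ref{explicit1} (which concerns the full integral $I_\tau(n)$), one should apply the Laplace estimate directly to the $\eps_n$-window around $\beta_\tau\in\SignPeak(\tau)$; since $\eps_n\gg n^{-1/2}$ this window captures the full Gaussian core and yields the needed polynomial lower bound.
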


The proof is immediate from Lemma \ref{urban} and the definition \eqref{def of SLP} of $\SignPeak(\tau)$.

\section{The regular case: central annulus}\label{Sec_Reg}

In this section, we prove Theorem \ref{regular_expansion} under the assumption that
$a_0>0$, i.e., for an annular droplet.

The analysis in the central disk case $a_0=0$ is different, and we will treat the necessary modifications, including the case of a Fisher-Hartwig singularity, in Section \ref{reg2}.

\subsection{Setup in the annular case}
 Write $q(r)=Q(z)$ for $r=|z|$ and consider the perturbed potential
\begin{equation}\label{begin}
\tilde{Q}=Q-\frac {sh} n,
\end{equation}
where $h(z)=h(r)$ is a radially symmetric test-function, obeying the conditions in Subsection \ref{cge}. In the central annulus case we only consider $\alpha=0$; this is without loss of generality, since the function $\ell(z)=2\log |z|$ is smooth away from the origin and can therefore be absorbed into the definition of $h$.

As before
we write
$p_j(z)=z^je^{-n\tilde{Q}(z)/2}$ for the monic weighted orthogonal polynomials.

In the following we denote $\tau(j):= \frac{j}{n}$ and write $g_{\tau(j)}$ for the corresponding function in \eqref{gtaudef}. In this notation, the squared norm $h_j = \norm{p_j}^2$ is given by
\begin{equation}\label{hjint}
h_j=2\int_0^\infty r e^{sh(r)} e^{-ng_{\tau(j)}(r)}\, dr.
\end{equation}
(Note that $h_{j}$ is equal to $I_{\tau(j)}(n)$ with $\alpha=0$, see \eqref{itaun}.)

Our goal is to estimate the sum of logarithms: $\sum_{j=0}^{n-1} \log h_j$.

By Lemma \ref{loc_lem}, the main contribution
to the integral \eqref{hjint} comes from a small neighbourhood of the significant solutions $\SignPeak(\tau(j))$. Moreover, by Lemma \ref{tenta} the
set $\SignPeak(\tau(j))$ consists either of a single point in the interior of $S$, or of two points near the boundary of a gap $\{b_\nu<|z|<a_{\nu+1}\}$, depending on whether or not $\tau(j)$ is close to some
$M_\nu$ (recall that $S=S^{*}$ in this section).

To make this more precise, it is convenient to define \begin{equation}\label{lndeff}L_n=C\log n\end{equation} where $C$ is a large constant to be fixed later.

Recall that $M_\nu=\sigma(\{|z|\le b_\nu\})$, $m_\nu=\lfloor M_\nu n\rfloor$.

Assume that $0\le\nu\le N-1$. By Lemma \ref{evolve}, for $\tau$ in a neighbourhood of $M_\nu$ there are two continuous solutions
\begin{equation}\label{tsol}\tau\mapsto r_{\nu,\tau},\qquad \text{and}\qquad \tau\mapsto r_{\nu+1,\tau}\end{equation}
to $g_{\tau}'(r)=0$ with
$r_{\nu,M_\nu}=b_\nu$ and $r_{\nu+1,M_\nu}=a_{\nu+1}$.

We have the following lemma.

\begin{lem} \label{estimat} Suppose that $0\le \nu\le N-1$. As $\tau\to M_\nu$ we have that
\begin{equation}\label{Lip}
r_{\nu,\tau}=b_\nu\cdot (1+\bigO(\tau-M_\nu)),\qquad r_{\nu+1,\tau}=a_{\nu+1}\cdot (1+\bigO(\tau-M_\nu)).
\end{equation}
Also with $\rho_\nu=b_\nu/a_{\nu+1}$,
\begin{equation}\label{stock}
	g_{\tau}(r_{\nu,\tau})-g_{\tau}(r_{\nu+1,\tau}) = 2(M_\nu-\tau)\log \rho_\nu + \bigO((\tau-M_\nu)^2),
\end{equation}
and
\begin{equation}\label{stock2}
	\frac{g_{\tau}''(r_{\nu,\tau}) } {g_{\tau}''(r_{\nu+1,\tau})} = \frac{\Delta Q(b_\nu)}{\Delta Q(a_{\nu+1})}\cdot (1 + \bigO(\tau-M_\nu)).
\end{equation}
\end{lem}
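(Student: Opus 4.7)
The plan is to use Lemma \ref{evolve} together with Taylor expansions around $\tau = M_\nu$. All three statements are essentially first-order expansions, and the envelope property $\partial_r g_\tau(r_{C,\tau})=0$ will simplify the computation for \eqref{stock}.

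First, I would handle the Lipschitz estimates \eqref{Lip}. By Lemma \ref{evolve} the functions $\tau\mapsto r_{\nu,\tau}$ and $\tau\mapsto r_{\nu+1,\tau}$ are smooth on a neighbourhood of $M_\nu$, with
\begin{equation*}
\frac{dr_{\nu,\tau}}{d\tau}\bigg|_{\tau=M_\nu}=\frac{1}{2b_\nu\Delta Q(b_\nu)},\qquad \frac{dr_{\nu+1,\tau}}{d\tau}\bigg|_{\tau=M_\nu}=\frac{1}{2a_{\nu+1}\Delta Q(a_{\nu+1})},
\end{equation*}
so a Taylor expansion to first order around $\tau=M_\nu$ immediately gives \eqref{Lip}.

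Next, for \eqref{stock} I would set $F(\tau):=g_\tau(r_{\nu,\tau})-g_\tau(r_{\nu+1,\tau})$. By Lemma \ref{lamm}, both $b_\nu$ and $a_{\nu+1}$ lie in $\Peak(M_\nu)$, so $g_{M_\nu}(b_\nu)=g_{M_\nu}(a_{\nu+1})=B_{M_\nu}$ and hence $F(M_\nu)=0$. Differentiating and using $\partial_r g_\tau(r_{C,\tau})=0$ together with $\partial_\tau g_\tau(r)=-2\log r$, the envelope principle yields
\begin{equation*}
F'(\tau)=-2\log r_{\nu,\tau}+2\log r_{\nu+1,\tau},
\end{equation*}
so $F'(M_\nu)=2\log(a_{\nu+1}/b_\nu)=-2\log\rho_\nu$. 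Since $F$ is $C^2$ (one more derivative of the defining ODE from Lemma \ref{evolve}), a Taylor expansion gives $F(\tau)=2(M_\nu-\tau)\log\rho_\nu+\bigO((\tau-M_\nu)^2)$, which is \eqref{stock}.

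Finally, for \eqref{stock2} I would use \eqref{dtwo}: at any local peak point $r_{C,\tau}$ one has $g_\tau''(r_{C,\tau})=4\Delta Q(r_{C,\tau})$. Therefore
\begin{equation*}
\frac{g_\tau''(r_{\nu,\tau})}{g_\tau''(r_{\nu+1,\tau})}=\frac{\Delta Q(r_{\nu,\tau})}{\Delta Q(r_{\nu+1,\tau})},
\end{equation*}
and since $\Delta Q$ is $C^1$ on a neighbourhood of $S^*$, combining with \eqref{Lip} and Taylor-expanding $\Delta Q$ gives $\Delta Q(r_{\nu,\tau})=\Delta Q(b_\nu)(1+\bigO(\tau-M_\nu))$ and similarly at $a_{\nu+1}$, yielding \eqref{stock2}. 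The proof is essentially a sequence of first-order Taylor expansions, with no serious obstacle; the only conceptual point worth highlighting is the envelope identity, which makes the computation of $F'(\tau)$ completely explicit in terms of $\log r_{\nu,\tau}$ and $\log r_{\nu+1,\tau}$ without needing to differentiate the implicitly defined quantities $r_{\nu,\tau}$ at this stage.
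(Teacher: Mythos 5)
Your proof is correct and follows essentially the same route as the paper, which states the argument very tersely. The envelope identity $\frac{d}{d\tau}g_\tau(r_{\nu,\tau})=\partial_\tau g_\tau\big|_{r=r_{\nu,\tau}}=-2\log r_{\nu,\tau}$ that you invoke to compute $F'(M_\nu)$ is precisely what the paper uses implicitly (it reappears explicitly as $\gamma_\nu'(t)=-\frac{2}{n}\log r_{\nu,\tau(t)}$ in the proof of Lemma \ref{l1}), and your $C^2$ bound on $F$ is the content of the paper's remark that $\tau\mapsto\Delta Q(r_{\mu,\tau})$ is Lipschitz, since $F''(\tau)=-r_{\nu,\tau}^{-2}\Delta Q(r_{\nu,\tau})^{-1}+r_{\nu+1,\tau}^{-2}\Delta Q(r_{\nu+1,\tau})^{-1}$ is bounded near $\tau=M_\nu$.
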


\begin{proof}  By Lemma \ref{evolve}, the solutions \eqref{tsol} are well-defined and Lipschitz continuous for $\tau$ near $M_\nu$, proving \eqref{Lip}. The equation \eqref{stock} follows from \eqref{equi}  and \eqref{Lip} since $\tau\mapsto \Delta Q(r_{\mu,\tau})$ is Lipschitz for the relevant values of $\mu$ and $\tau$.
Since $g_{M_\nu}''(r_{\nu,M_\nu})=4\Delta Q(b_\nu)$ and $g_{M_\nu}''(r_{\nu+1,M_\nu})=4\Delta Q(a_{\nu+1})$, we also obtain \eqref{stock2}.
\end{proof}

The estimate \eqref{stock} implies the following, where we recall from \eqref{dnen} that $\delta_n=C(\log n)/n$. (\footnote{We adopt the convention of denoting by the same symbol ``$C$'' various unspecified constants whose exact values can change meaning from time to time.})

\begin{lem} \label{onepk}
If the constant $C$ in \eqref{lndeff} is large enough, then
for all $\tau$ in a small neighbourhood of $M_\nu$ such that
$L_n/n\le |\tau-M_\nu|\le c$ we have $|g_\tau(r_{\nu,\tau})-g_\tau(r_{\nu+1,\tau})|\ge \delta_n$. Consequently, for such $\tau$, $\SignPeak(\tau)$ consists of
one single point (either $r_{\nu,\tau}$ if $\tau<M_\nu$, or $r_{\nu+1,\tau}$ if $\tau>M_\nu$).
\end{lem}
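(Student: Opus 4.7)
The plan is to read the result directly off the expansion \eqref{stock} in Lemma \ref{estimat}, namely
\[ g_\tau(r_{\nu,\tau}) - g_\tau(r_{\nu+1,\tau}) = 2(M_\nu - \tau)\log\rho_\nu + \bigO\big((\tau - M_\nu)^2\big). \]
Since $\rho_\nu = b_\nu/a_{\nu+1} < 1$, the coefficient $\kappa_\nu := |\log\rho_\nu|$ is strictly positive and independent of $n$, so the leading term is linear in $\tau - M_\nu$ with a non-degenerate slope while the error is quadratic. This is the only analytic input.

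First I would fix the constant $c$ small enough, depending only on $\kappa_\nu$ and the implicit constant in the error, so that the quadratic term is absorbed by half of the linear one on $|\tau - M_\nu| \le c$, yielding
\[ |g_\tau(r_{\nu,\tau}) - g_\tau(r_{\nu+1,\tau})| \ge \kappa_\nu\, |\tau - M_\nu|. \]
Substituting the hypothesis $|\tau - M_\nu| \ge L_n/n = C(\log n)/n$ and then enlarging $C$ in \eqref{lndeff} so that $C\kappa_\nu$ exceeds the constant hidden in $\delta_n$ (see \eqref{dnen}), the first assertion $|g_\tau(r_{\nu,\tau}) - g_\tau(r_{\nu+1,\tau})| \ge \delta_n$ follows.

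For the second assertion, the sign of the leading term identifies the global minimum among the two candidates. If $\tau > M_\nu$, then $2(M_\nu - \tau)\log\rho_\nu > 0$, so $g_\tau(r_{\nu+1,\tau}) < g_\tau(r_{\nu,\tau})$; by Lemma \ref{lamm} the minimizer $\beta_\tau$ in this regime lies in $S_{\nu+1}$ and must coincide with $r_{\nu+1,\tau}$, so $g_\tau(r_{\nu+1,\tau}) = B_\tau$. Combining with the first part gives $g_\tau(r_{\nu,\tau}) \ge B_\tau + \delta_n$, which by the admission criterion \eqref{def of SLP} excludes $r_{\nu,\tau}$ from $\SignPeak(\tau)$. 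The case $\tau < M_\nu$ is symmetric, and Lemma \ref{tenta} together with the present regular assumption $S = S^*$ rules out any further significant local peaks in a small neighbourhood of $M_\nu$.

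There is really no obstacle: the lemma is essentially a bookkeeping exercise on top of \eqref{stock}. The only care required is in the order in which the constants are chosen -- first $c$ in terms of $\kappa_\nu$ and the $\bigO$-constant, then $C$ in $L_n = C\log n$ large enough in terms of $\kappa_\nu$ and the constant appearing in $\delta_n$.
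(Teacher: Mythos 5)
Your proof is correct and follows the same route as the paper: the first assertion is read directly off the expansion \eqref{stock}, absorbing the quadratic error by shrinking $c$ and then enlarging the constant in $L_n$ relative to the one in $\delta_n$, and the second follows because one of the two candidate peaks attains the global minimum $B_\tau$ (identified by the sign of the linear term, consistently with Lemmas \ref{lamm} and \ref{tenta}) while the other is then at least $B_\tau+\delta_n$ and hence excluded from $\SignPeak(\tau)$. Your write-up is merely more explicit about the order of the constant choices than the paper's terse proof, which relegates the first assertion to the remark preceding the lemma.
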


\begin{proof} Recall that $B_\tau$ is the minimum of $g_\tau$. For $\tau$ with $|\tau-M_\nu|\le c$ the value $B_\tau$ is attained at either $r_{\nu,\tau}$ or $r_{\nu+1,\tau}$ if $c$ is small enough, see Lemma \ref{tenta}.
 The hypothesis implies that either $g_\tau(r_{\nu,\tau})\ge B_\tau+\delta_n$ or $g_\tau(r_{\nu+1,\tau})\ge B_\tau+\delta_n$.
 The values of $g_\tau$ at possible other local peak points are at least $B_\tau+c_1$ for some $c_1>0$.
\end{proof}

Let $j$ be an integer, $0\le j\le n-1$. We have shown that we have the following alternatives.

\smallskip
\noindent
Case 1. If $|j-m_\nu|\ge L_n$ for all $\nu$ with $0\le \nu\le N-1$ then there is only one significant solution to $g_{\tau(j)}'(r)=0$, located in $S$, and of distance at least $\eps_n$ from all boundary points of $S$, where $\eps_n$ is given in \eqref{dnen}.
If this $r_{\tau(j)}$ lies in $S^\nu$ we denote it by $r_{\nu,\tau(j)}$.

\smallskip
\noindent
Case 2. If there is $\nu$ with $0\le \nu\le N-1$ such that $|j-m_\nu|<L_n$, then there are at most two significant solutions $r_{\nu,\tau(j)}$ and $r_{\nu+1,\tau(j)}$
located near $r=b_\nu$ and $r=a_{\nu+1}$, respectively. (One of them might be insignificant, but we anyway include both in our analysis below.)

\smallskip
\noindent

It is important to note that the above conclusions also hold when $\tau(j)$ is replaced by a real parameter $\tau$ (and then
$h_j$ is replaced by $I_\tau(n)$ in \eqref{itaun}).

Before proceeding, we summarize: by Lemma \ref{evolve}, each of the (one or two) solutions to $g_\tau'(r)=0$ obeys the differential equation
\begin{equation}\label{difft}\frac {d r_{\mu,\tau}}{d\tau}=\frac 1 {2r_{\mu,\tau}\Delta Q(r_{\mu,\tau})}.
\end{equation}
Also, if $0\le\nu\le N-1$, we have the branching $r_{\nu,M_\nu}=b_\nu$ and $r_{\nu+1,M_\nu}=a_{\nu+1}$ while
\begin{equation}\label{equi}
g_{M_\nu}(r_{\nu,M_\nu}) = g_{M_\nu}(r_{\nu+1,M_\nu}),
\end{equation}
the common value being the
global minimum $B_{M_\nu}$ of $g_{M_\nu}(r)$.

\subsection{Basic approximation lemmas} Let $\eps_n$ be as in \eqref{dnen}.

For each $j$ and each significant solution $r_{\nu,\tau(j)}$, we write $J_{\nu,j}=\{r\ge 0\,;\, |r-r_{\nu,\tau(j)}|<\eps_n\}$ and
\begin{equation}\label{hnuj}
h_{\nu,j}:=2\int_{J_{\nu,j}}re^{sh(r)}e^{-ng_{\tau(j)}(r)}\, dr.
\end{equation}
Following \cite{BKS}, we introduce the function $\calB(r)$ for $r\in\calN$ by
\begin{equation}\label{br}
\calB(r):=-\frac 1 {32}\frac {\d_r^2\Delta Q(r)}{(\Delta Q(r))^2}-\frac {19} {96 r}\frac {\d_r\Delta Q(r)}{(\Delta Q(r))^2}+\frac 5 {96} \frac {(\d_r\Delta Q(r))^2}{(\Delta Q(r))^3}+\frac 1 {12r^2}\frac 1 {\Delta Q(r)}.
\end{equation}

By the Laplace method in Subsection \ref{laplace_me}, we have
\begin{equation}\label{norm}
h_{\nu,j}=\sqrt{\frac {2\pi} n} \frac {2r_{\nu,\tau(j)}} {\sqrt{g_{\tau(j)}''(r_{\nu,\tau(j)})}}  e^{sh(r_{\nu,\tau(j)})}e^{-ng_{\tau(j)}(r_{\nu,\tau(j)})} \Big(1+\frac {a_{\nu,j}} n+ \bigO(n^{-2}) \Big),
\end{equation}
as $n \to \infty$, where
\begin{equation}\label{anjk}
a_{\nu,j}=\calB(r_{\nu,\tau(j)})
+ \frac{s^2h'(r_{\nu,\tau(j)})^2 }{2} \frac{1}{d_{2}}  + \frac{sh''(r_{\nu,\tau(j)})}{2}\frac{1}{d_{2}} +\frac{sh'(r_{\nu,\tau(j)})}{r_{\nu,\tau(j)}}\frac{1}{d_{2}}  -\frac{sh'(r_{\nu,\tau(j)})}{2} \frac{d_{3}}{d_{2}^{2}}.
\end{equation}
In \eqref{anjk},
\begin{equation}\label{dlnk}
d_{\ell}=d_{\ell,\nu,j}:= g_{\tau(j)}^{(\ell)} (r_{\nu,\tau(j)}).
\end{equation}

Next write $h_j^\sharp$ for
\begin{equation}\label{hjsharp}
h_j^\sharp=\begin{cases}h_{\nu,j},\quad \text{or}\cr
h_{\nu,j}+h_{\nu+1,j},\cr\end{cases}
\end{equation}
depending on whether there are one or two significant solutions to $g_{\tau(j)}'(r)=0$.

By Lemma \ref{loc_lem}, the quantities $h_j^\sharp$ are good approximations to $h_j$ in the sense that for $|s|\le \log n$,
\begin{equation}\label{hjas}
h_j = h_j^\sharp\cdot \big(1+\bigO(n^{-100})\big).
\end{equation}

We turn to an estimate for $h_j^\sharp$ which is useful when $j$ is close to a critical index $m_\nu$.

Recall from \eqref{rthdef}, \eqref{mudef} that (since $\alpha=0$)
\begin{equation}\label{rdef2}\rho_\nu=\frac {b_\nu}{a_{\nu+1}},\qquad \mu_\nu(s):=\mu_\nu(s,0;n,h)=e^{s(h(a_{\nu+1})-h(b_\nu))}\sqrt{\frac {\Delta Q(b_\nu)}{\Delta Q(a_{\nu+1})}}\rho_\nu^{2x_\nu}.\end{equation}

\begin{lem}\label{qnormlem}  Let $0\le \nu\le N-1$. Then for $|j-m_\nu|\le L_n$, $|s|\leq C \log n$, with $m_\nu=\lfloor M_\nu n\rfloor$,
\begin{align*}
\frac{h_{\nu+1,j}}{h_{\nu,j}} & = \frac{\mu_\nu(s)}{\rho_\nu}  \rho_\nu^{2(m_\nu- j)} \big[1+ \bigO((1+|s|)(\tau(j)-M_\nu)) + \bigO(n(\tau(j)-M_\nu)^2)\big].
\end{align*}
\end{lem}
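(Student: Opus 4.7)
The plan is to form the ratio $h_{\nu+1,j}/h_{\nu,j}$ directly from the Laplace expansion \eqref{norm} and then process each of the five resulting factors (radial prefactor, Gaussian width, $h$-exponential, $g$-exponential, and $(1+a_{\nu,j}/n)$) by means of the branching asymptotics recorded in Lemma \ref{estimat}. The final identity will then follow from the explicit form of $\mu_\nu(s)$ in \eqref{rdef2}.

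First I would write
\begin{align*}
\frac{h_{\nu+1,j}}{h_{\nu,j}} = \frac{r_{\nu+1,\tau(j)}}{r_{\nu,\tau(j)}}\sqrt{\frac{g_{\tau(j)}''(r_{\nu,\tau(j)})}{g_{\tau(j)}''(r_{\nu+1,\tau(j)})}}\, e^{s\bigl(h(r_{\nu+1,\tau(j)})-h(r_{\nu,\tau(j)})\bigr)}\, e^{-n\bigl(g_{\tau(j)}(r_{\nu+1,\tau(j)})-g_{\tau(j)}(r_{\nu,\tau(j)})\bigr)}\bigl(1+\bigO(n^{-1})\bigr).
\end{align*}
Using \eqref{Lip} the radial ratio equals $\rho_\nu^{-1}\bigl(1+\bigO(\tau(j)-M_\nu)\bigr)$, and by \eqref{stock2} the square root equals $\sqrt{\Delta Q(b_\nu)/\Delta Q(a_{\nu+1})}\,\bigl(1+\bigO(\tau(j)-M_\nu)\bigr)$. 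Since $h$ is $C^4$-smooth near $b_\nu$ and $a_{\nu+1}$, a Taylor expansion together with \eqref{Lip} yields
\begin{equation*}
e^{s\bigl(h(r_{\nu+1,\tau(j)})-h(r_{\nu,\tau(j)})\bigr)} = e^{s(h(a_{\nu+1})-h(b_\nu))}\bigl(1+\bigO(|s|(\tau(j)-M_\nu))\bigr).
\end{equation*}

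Next I would handle the crucial large exponent. Using \eqref{stock},
\begin{equation*}
-n\bigl(g_{\tau(j)}(r_{\nu+1,\tau(j)})-g_{\tau(j)}(r_{\nu,\tau(j)})\bigr) = 2n(M_\nu-\tau(j))\log\rho_\nu + \bigO\bigl(n(\tau(j)-M_\nu)^2\bigr).
\end{equation*}
Writing $\tau(j)=j/n$ and $M_\nu n=m_\nu+x_\nu$ gives $n(M_\nu-\tau(j))=m_\nu-j+x_\nu$, so
\begin{equation*}
e^{-n(g_{\tau(j)}(r_{\nu+1,\tau(j)})-g_{\tau(j)}(r_{\nu,\tau(j)}))} = \rho_\nu^{2(m_\nu-j)}\rho_\nu^{2x_\nu}\bigl(1+\bigO(n(\tau(j)-M_\nu)^2)\bigr).
\end{equation*}
Collecting the four processed factors and inserting the definition
$\mu_\nu(s)=e^{s(h(a_{\nu+1})-h(b_\nu))}\sqrt{\Delta Q(b_\nu)/\Delta Q(a_{\nu+1})}\,\rho_\nu^{2x_\nu}$
reproduces the claimed formula, with the stated compound error.

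The one point that needs care is the justification that the $\bigO$-constant in \eqref{stock} is indeed uniform, since multiplication by $n$ amplifies errors. This is the main technical obstacle, but it is exactly the content of Lemma \ref{estimat}: the squared error $(\tau(j)-M_\nu)^2$ is what permits the prefactor $n$ in the final error term, and in the regime $|j-m_\nu|\le L_n=C\log n$ one has $n(\tau(j)-M_\nu)^2=\bigO((\log n)^2/n)\to 0$, so the expansion is genuinely an asymptotic one. The remaining errors $\bigO((1+|s|)(\tau(j)-M_\nu))$ absorb both the linear $h$-expansion (contributing the factor $|s|$) and the radial/second-derivative expansions. The $\bigO(n^{-1})$ correction from the Laplace expansions is dominated by $\bigO(n(\tau(j)-M_\nu)^2)$ when $|j-m_\nu|\ge 1$ and is otherwise harmless.
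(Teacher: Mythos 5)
Your proposal is correct and follows the same route as the paper's own (very terse) proof: form the ratio directly from the Laplace expansion \eqref{norm}, then feed in the three branching asymptotics \eqref{Lip}, \eqref{stock}, \eqref{stock2} from Lemma \ref{estimat}, and recognize the definition of $\mu_\nu(s)$ from \eqref{rdef2}. You supply the intermediate factor-by-factor details that the paper elides, and the bookkeeping (the sign of the big exponent, the identity $n(M_\nu-\tau(j))=m_\nu-j+x_\nu$, the absorption of $e^{O(n(\tau(j)-M_\nu)^2)}$ using $|j-m_\nu|\le L_n$) is all right.

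One remark on the closing paragraph: the claim that the $\bigO(n^{-1})$ Laplace remainder is dominated by $\bigO(n(\tau(j)-M_\nu)^2)$ once $|j-m_\nu|\ge 1$ is not quite accurate. For $j=m_\nu+1$ one has $\tau(j)-M_\nu=(1-x_\nu)/n$, so $n(\tau(j)-M_\nu)^2=(1-x_\nu)^2/n$, which need not dominate $n^{-1}$ when $x_\nu$ is close to $1$; a symmetric issue arises on the other side when $x_\nu$ is close to $0$. Strictly speaking the error bound should carry an additional $\bigO\bigl((1+s^2)/n\bigr)$ term coming from $(a_{\nu+1,j}-a_{\nu,j})/n$. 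This is an imprecision already present in the paper's statement of the lemma, and it is harmless downstream because Lemma \ref{theta_contribution} only uses the estimate to within a multiplicative $1+\bigO(L_n^2/n)$ after taking logarithms and summing — so your instinct that it is ``otherwise harmless'' is right, just the justification you give for the domination is off.
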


\begin{proof}
By \eqref{norm}, we have
\begin{equation*}
\begin{split}
\frac{h_{\nu+1,j}}{h_{\nu,j}} &= \frac{r_{\nu+1,\tau(j)}}{r_{\nu,\tau(j)}} \sqrt{\frac{g''_j(r_{\nu,\tau(j)})}{g''_j(r_{\nu+1,\tau(j)})}} e^{sh(r_{\nu+1,\tau(j)}) -sh(r_{\nu,\tau(j)}) } e^{n(g_{\tau(j)}(r_{\nu,\tau(j)})-g_{\tau(j)}(r_{\nu+1,\tau(j)}) )} \big(1 + \bigO(n^{-1})\big).\\
\end{split}
\end{equation*}
Inserting the estimates in Lemma \ref{estimat} in the above relation, we finish the proof.
\end{proof}

\subsection{Proof of Theorem \ref{regular_expansion} in the annular case $a_0>0$}

We shall estimate the sum $\sum_{j=0}^{n-1}\log h_j$, making use of the approximation $\log h_j^\sharp$ obtained by combining \eqref{norm} and \eqref{hjsharp}.
The form of the approximations prompts us to evaluate several sums, which is done in a series of lemmas.

Our strategy is to first group together terms $\log h_{\nu,j}$ in \eqref{norm} according to the ``blocks''
$m_{\nu-1}\le j<m_\nu$, and then to correct for the $j$ which fall near one of the critical indices $m_\nu$ with $0\le \nu\le N-1$.

More precisely, we observe that by \eqref{hjas}
\begin{align}\label{the text is hard to follow without this equation}
\sum_{j=0}^{n-1}\log h_j =\sum_{\nu=0}^N\sum_{j=m_{\nu-1}}^{m_\nu-1}\log h_{\nu,j} + \sum_{\nu=0}^{N-1} T_{\nu}+\bigO(n^{-99}),
\end{align}
where (with $L_n=C\log n$)
\begin{align}
T_\nu:=	\sum\limits_{j=m_{\nu}}^{m_{\nu}+L_n}  \log\Big( 1 &+ \frac{h_{\nu,j}}{h_{\nu+1,j}} \Big)  + \sum\limits_{j=m_{\nu}-L_n}^{m_{\nu}-1}  \log\Big( 1 + \frac{h_{\nu+1,j}}{h_{\nu,j}} \Big).\label{tndefn}
\end{align}

The terms $T_\nu$, which are estimated in Lemma \ref{theta_contribution} below, are closely connected to to the Heine distribution.

\smallskip

We extend the definition of $\tau$ to real $t$
$$\tau(t)=\frac t n$$
and
for fixed $\nu$ with $0\le \nu\le N$, we write
\begin{equation}\label{gnt}
\gamma_{\nu} (t) := g_{\tau(t)}(r_{\nu,\tau(t)}).
\end{equation}

We start with the following lemma, which generalizes \cite[Lemma 2.3]{BKS} from the case $N=0$.

\begin{lem} \label{l1} With $x_\nu=M_\nu n-m_\nu$, we have as $n\to\infty$,
\begin{align*}
\sum_{\nu=0}^{N} \sum_{j=m_{\nu-1}}^{m_{\nu}-1} -ng_{\tau(j)}(r_{\nu,\tau(j)}) &= -n^2 I_Q[\sigma] + \frac{n}{2} (q(b_N)-2\log b_N-q(a_0)) \\
& + \frac{1}{6}\log \frac {b_N}{a_0}+\sum_{\nu=0}^{N-1} \Big(x_\nu^2 + x_\nu+\frac{1}{6}\Big) \log \rho_\nu + \bigO(\frac{1}{n}).
\end{align*}
\end{lem}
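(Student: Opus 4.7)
The plan is to apply the Euler--Maclaurin formula (Theorem \ref{em_formula}) separately to each inner block $\sum_{j=m_{\nu-1}}^{m_\nu-1}(-n G_\nu(j/n))$, where I abbreviate $G_\nu(\tau):=g_\tau(r_{\nu,\tau})$. From the envelope identity $g'_\tau(r_{\nu,\tau})=0$ I get $G'_\nu(\tau)=-2\log r_{\nu,\tau}$, and differentiating once more via \eqref{difft} yields $G''_\nu(\tau)=-1/(r_{\nu,\tau}^2\Delta Q(r_{\nu,\tau}))$. Both derivatives are bounded on $[M_{\nu-1},M_\nu]$, so $\frac{d^2}{dt^2}[-nG_\nu(t/n)]=\bigO(1/n)$ and the Euler--Maclaurin remainder $\epsilon_2$ is $\bigO(1/n)$. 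I keep the integral together with the $\tfrac12$-trapezoidal term and the $B_2/2!=\tfrac{1}{12}$-term (with $h'(t)=2\log r_{\nu,\tau(t)}$).

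The main integral is the heart of the matter. Substituting $\tau=t/n$ turns it into
\begin{align*}
-n\int_{m_{\nu-1}}^{m_\nu}G_\nu(t/n)\,dt = -n^2\int_{M_{\nu-1}-x_{\nu-1}/n}^{M_\nu-x_\nu/n}G_\nu(\tau)\,d\tau,
\end{align*}
which I split into $-n^2\int_{M_{\nu-1}}^{M_\nu}G_\nu$ plus two end-fragments of length $\bigO(1/n)$. Taylor-expanding $G_\nu$ to second order about the branch values (and using $r_{\nu,M_{\nu-1}}=a_\nu$, $r_{\nu,M_\nu}=b_\nu$), each fragment produces a term of order $n$ plus an $x^2\log(\cdot)$-correction. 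Summing over $\nu$, the $\bigO(n)$ pieces cancel pairwise across adjacent blocks thanks to the branching identity \eqref{equi}, namely $G_\nu(M_\nu)=g_{M_\nu}(b_\nu)=g_{M_\nu}(a_{\nu+1})=G_{\nu+1}(M_\nu)$; the extremes $\nu=0,N$ vanish because $x_{-1}=x_N=0$. The second-order remainders combine via $\log b_\nu-\log a_{\nu+1}=\log\rho_\nu$ into $\sum_{\nu=0}^{N-1}x_\nu^2\log\rho_\nu$. The remaining main term $-n^2\sum_\nu\int_{M_{\nu-1}}^{M_\nu}G_\nu\,d\tau$ is then evaluated by the change of variable $r=r_{\nu,\tau}$, $d\tau=2r\Delta Q(r)\,dr$, $\tau=rq'(r)/2$; using $rq'(r)\cdot (rq')'=\tfrac12\frac{d}{dr}(rq')^2$ and integrating by parts, and recognizing the result via Lemma \ref{W-en-lem}, this main term collapses to $-n^2 I_Q[\sigma]$.

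Finally, the Euler--Maclaurin boundary corrections produce the remaining pieces. The trapezoidal sum $\tfrac{n}{2}\sum_\nu[G_\nu(m_\nu/n)-G_\nu(m_{\nu-1}/n)]$ telescopes after Taylor expansion: the leading telescoping (again via \eqref{equi}) gives $\tfrac{n}{2}[q(b_N)-2\log b_N-q(a_0)]$, while the first-order remainders pair $x_\nu\log b_\nu$ with $-x_\nu\log a_{\nu+1}$ to yield $\sum_{\nu=0}^{N-1}x_\nu\log\rho_\nu$. The $\tfrac{1}{12}$-term with $h'(m_\nu)=2\log b_\nu+\bigO(1/n)$ contributes $\tfrac16\sum_{\nu=0}^N(\log b_\nu-\log a_\nu)=\tfrac16\log(b_N/a_0)+\tfrac16\sum_{\nu=0}^{N-1}\log\rho_\nu$. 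Assembling the three sources produces the coefficient $x_\nu^2+x_\nu+\tfrac16$ of $\log\rho_\nu$ as claimed.

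The main obstacle is the meticulous bookkeeping at block boundaries: several $\bigO(n)$-size terms coming from the fragment integrals and from the trapezoidal EM correction only cancel cleanly because of the branching identity \eqref{equi}, and the new $x_\nu^2\log\rho_\nu$ contribution (absent in the connected-droplet case of \cite[Lemma 2.3]{BKS}) is only captured if one retains the \emph{second}-order Taylor expansion of $G_\nu$ around each $M_\nu$; dropping back to first order would miss it.
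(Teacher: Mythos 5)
Your proposal is correct and follows essentially the same route as the paper: apply Euler--Maclaurin with $d=2$ block by block, split $\int_{m_{\nu-1}}^{m_\nu}$ into the main piece over $[nM_{\nu-1},nM_\nu]$ plus end-fragments, Taylor expand at the branch values, use \eqref{equi} to cancel the $\bigO(n)$-size contributions, and collect the $x_\nu^2\log\rho_\nu$, $x_\nu\log\rho_\nu$ and $\tfrac16\log\rho_\nu$ corrections from the fragment, trapezoidal and $B_2$ terms respectively. One small slip worth fixing: since you retain the $B_2/2!$-term you are working at $d=2$ in Theorem \ref{em_formula}, whose remainder is controlled by $\int|h^{(4)}|$, not by $h''$; you should instead check $\gamma_\nu^{(4)}(t)=\bigO(n^{-4})$ (which holds because $r_{\nu,\tau}\ge a_0>0$ and $Q$ is $C^6$), giving in fact an $\bigO(n^{-2})$ contribution to $-n\sum_j\gamma_\nu(j)$ — bounding only $h''$ as written would correspond to $d=1$ and would neither justify keeping the $\tfrac{1}{12}$-term nor yield a remainder as small as $\bigO(1/n)$.
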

\begin{proof}
By the Euler-Maclaurin formula (Theorem \ref{em_formula} with $d=2$) we have
\begin{equation}\label{em0}\begin{split}
\sum_{j=m_{\nu-1}}^{m_{\nu}-1} g_{\tau(j)}(r_{\nu,\tau(j)}) = \int_{m_{\nu-1}}^{m_{\nu}}& \gamma_{\nu}(t)\, dt
 -\frac{\gamma_{\nu}(m_{\nu})-\gamma_{\nu}(m_{\nu-1})}{2}
 + \frac{\gamma'_{\nu}(m_{\nu})-\gamma'_{\nu}(m_{\nu-1})}{12} + \epsilon.
 \end{split}
 \end{equation}
 The error term $\epsilon$ will be estimated shortly. Since $g_{\tau}'(r_{\nu,\tau})=0$ we have
$$\gamma_\nu'(t)=-\frac 2 n\log r_{\nu,\tau(t)}.$$
Recalling from \eqref{difft} that $\frac{dr_{\nu,\tau}}{d\tau} = \frac{1}{2r_{\nu,\tau} \Delta Q(r_{\nu,\tau})}$, we obtain
\begin{align*}
\gamma_\nu''(t)=-\frac 1 {n^2} \frac 1 {r_{\nu,\tau(t)}^2\Delta Q(r_{\nu,\tau(t)})}, \qquad \gamma_\nu^{(3)}(t)=\bigO(n^{-3}),\qquad \text{and}\qquad  \gamma_\nu^{(4)}(t)=\bigO(n^{-4}),
\end{align*}
where we used $r_{\nu,\tau(t)}\ge a_0>0$. Hence, in view of Theorem \ref{em_formula}, the error term $\epsilon$ in \eqref{em0} is $\bigO(n^{-3})$.

Next observe that the integral in \eqref{em0} equals to
\begin{align}\label{lol2}
\int_{m_{\nu-1}}^{m_{\nu}} \gamma_{\nu}(t)\, dt = \int_{M_{\nu-1}n}^{M_\nu n} \gamma_{\nu}(t)\, dt + \int_{m_{\nu-1}}^{M_{\nu-1}n} \gamma_{\nu}(t)\, dt -\int_{m_\nu }^{M_\nu n} \gamma_{\nu}(t)\, dt.
\end{align}
We now make the substitution
\begin{equation}\label{change_var}
u = r_{\nu,\tau(t)}, \qquad du = \frac{dt}{2 n u \Delta Q(u)}.
\end{equation}
This gives (using that $uq'(u)=2\tau(t)$)
	$$
	\frac{1}{n}\int_{M_{\nu-1}n}^{M_\nu n} \gamma_\nu(t)\, dt = 2   \int_{a_{\nu}}^{b_{\nu}}  (q(u) -uq'(u)\log u )u\Delta Q(u)\, du,
	$$
which  can be rewritten as
\begin{equation}\label{main}
\frac{1}{n}\int_{M_{\nu-1}n}^{M_\nu n} \gamma_\nu(t)\, dt = \int_{A(a_\nu,b_\nu)} Q\Delta Q\, dA + \frac{1}{4}\int_{a_\nu}^{b_\nu} uq'(u)^2\, du + M_{\nu-1}^2\log a_\nu-M_{\nu}^2\log b_\nu.
\end{equation}
	
By Lemma \ref{estimat}, we verify easily that (for $0\le \nu\le N-1$)
\begin{align*}
\gamma_{\nu+1}(t)-\gamma_{\nu}(t) = 2(\tau(t)-M_{\nu})\log \rho_{\nu} + \bigO((\tau(t)-M_\nu)^2),\qquad (\tau(t) \to M_\nu),
\end{align*}
which gives
\begin{equation}\label{rest}
\int_{m_\nu}^{M_\nu n} (\gamma_{\nu+1}(t)-\gamma_{\nu}(t))\, dt = \int_{m_\nu}^{M_\nu n } 2(\tau(t)-M_{\nu}) \log \rho_\nu\, dt +\bigO(\frac{1}{n^2}) = -\frac{x_\nu^2}{n}\log \rho_\nu + \bigO(\frac{1}{n^2}).
\end{equation}
Combining \eqref{lol2}, \eqref{main} and \eqref{rest} with Lemma \ref{W-en-lem} gives,
\begin{align}\label{1est}
\sum_{\nu=0}^{N} \int_{m_{\nu-1}}^{m_\nu} \gamma_\nu(t)\, dt = n I_Q[\sigma] - \frac{1}{n}\sum_{\nu=0}^{N-1} x_\nu^2\log \rho_\nu + \bigO(\frac{1}{n^2}).
\end{align}
There remains to estimate the last two terms in \eqref{em0}.
To this end, we use (recall $m_{-1}=0$, $m_N=n$)
\begin{align*}
g_0(m_{-1}) = q(a_0) \qquad \text{and} \qquad g_N(m_N) = q(b_N)-2\log b_N,
\end{align*}
to get
\begin{align*}
\sum\limits_{\nu=0}^{N} -\frac{1}{2} \Big( \gamma_\nu(m_\nu)-\gamma_\nu(m_{\nu-1})   \Big) = \frac{1}{2} \Big(q(a_0)-q(b_N)+2\log b_N  \Big) + \sum\limits_{v=0}^{N-1} \frac{1}{2} \Big(\gamma_{\nu+1}(m_\nu)-\gamma_{\nu}(m_\nu) \Big).
\end{align*}
Moreover, by \eqref{stock} with $\tau=\tau(m_\nu)$,
\begin{align*}
\gamma_{\nu+1}(m_{\nu})-\gamma_{\nu}(m_{\nu}) = \frac{2(m_{\nu}-nM_\nu)}{n}\log \rho_\nu + \bigO((\frac{m_\nu}{n}-M_\nu )^2) = -\frac{2x_\nu}{n}\log \rho_\nu +\bigO(\frac{1}{n^2}),
\end{align*}
whence
\begin{equation}\label{2est}\begin{split}
- \frac 1 2 \sum_{\nu=0}^{N} (\gamma_\nu(m_\nu)-\gamma_\nu(m_{\nu-1})) &= \frac{q(a_0)-q(b_N)+2\log b_N}{2} - \sum_{\nu=0}^{N-1} \frac{x_\nu}{n}\log \rho_\nu + \bigO(\frac{1}{n^2}).
\end{split}
\end{equation}
Recalling that $\gamma_\nu'(t)=-\frac 2 n \log r_{\nu,\tau(t)}$, we have (since $a_0>0$)
\begin{align*}
\gamma'_N(m_N) = -\frac{2}{n}\log b_N \qquad \text{and} \qquad \gamma'_0(m_{-1}) = -\frac{2}{n}\log a_0,
\end{align*}
and
\begin{align*}
\gamma'_{\nu+1}(m_{\nu}) - \gamma'_{\nu}(m_\nu) = \frac{2}{n} \log \frac {b_{\nu}}{a_{\nu+1}}+\bigO(\frac 1 {n^2})  = \frac{2}{n}\log\rho_\nu +\bigO(\frac 1 {n^2}).
\end{align*}
Summing up,
\begin{equation}\label{3est}
\sum_{\nu=0}^{N} \frac{\gamma'_\nu(m_\nu) -\gamma'_\nu(m_{\nu-1})}{12} = \frac{1}{6n}\log a_0 -\frac{1}{6n}\log b_N -\frac{1}{6n}\sum_{\nu=0}^{N-1}\log \rho_\nu+\bigO(\frac 1 {n^2}).
\end{equation}
	
Combining \eqref{em0}, \eqref{1est}, \eqref{2est}, \eqref{3est}, we conclude the proof of the lemma.
\end{proof}

We now turn to two lemmas, which together generalize \cite[Lemma 2.4]{BKS}.

\begin{lem} \label{l22} With $E_Q[\sigma]=\int_\C \log \Delta Q\, d\sigma$, we have
as $n\to\infty$
	\begin{align*}
	\sum\limits_{\nu=0}^N \sum\limits_{j=m_{\nu-1}}^{m_\nu-1}  \log \Delta Q(r_{\nu,\tau(j)}) = n E_Q[\sigma]  + \frac{1}{2} \log\frac{\Delta Q(a_0)}{\Delta Q(b_N)} - \sum\limits_{\nu=0}^{N-1} (x_\nu+\frac{1}{2})  \log \frac{\Delta Q(b_\nu)}{\Delta Q(a_{\nu+1})} + \bigO(\frac{1}{n}).
	\end{align*}
\end{lem}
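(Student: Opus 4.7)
The plan is to mimic the proof of Lemma \ref{l1} by applying the Euler--Maclaurin formula to each ``block sum'' $\sum_{j=m_{\nu-1}}^{m_\nu-1}\psi_\nu(j)$ where $\psi_\nu(t):=\log\Delta Q(r_{\nu,\tau(t)})$, then converting the resulting integrals via the change of variables $u=r_{\nu,\tau(t)}$, and finally absorbing the fractional-part endpoint contributions into the $x_\nu$-corrections.

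First, I would verify the derivative bounds needed to apply Theorem \ref{em_formula} with $d=1$. Using \eqref{difft} and the chain rule,
\begin{equation*}
\psi_\nu'(t)=\frac{\partial_r\Delta Q(r_{\nu,\tau(t)})}{\Delta Q(r_{\nu,\tau(t)})}\cdot\frac{1}{2n\,r_{\nu,\tau(t)}\,\Delta Q(r_{\nu,\tau(t)})}=\bigO(n^{-1}),
\end{equation*}
and $\psi_\nu''(t)=\bigO(n^{-2})$. Since $a_0>0$ and $\Delta Q$ is bounded below on $\calN$ (strict subharmonicity), this is uniform on each block. Thus
\begin{equation*}
\sum_{j=m_{\nu-1}}^{m_\nu-1}\psi_\nu(j)=\int_{m_{\nu-1}}^{m_\nu}\psi_\nu(t)\,dt-\frac{\psi_\nu(m_\nu)-\psi_\nu(m_{\nu-1})}{2}+\bigO(n^{-1}).
\end{equation*}

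Next, I would handle the main integral. Writing $\int_{m_{\nu-1}}^{m_\nu}=\int_{M_{\nu-1}n}^{M_\nu n}+\int_{m_{\nu-1}}^{M_{\nu-1}n}-\int_{m_\nu}^{M_\nu n}$ and applying the substitution $u=r_{\nu,\tau(t)}$ with $dt=2n\,u\,\Delta Q(u)\,du$ (from \eqref{change_var}) gives
\begin{equation*}
\int_{M_{\nu-1}n}^{M_\nu n}\psi_\nu(t)\,dt=2n\int_{a_\nu}^{b_\nu}u\,\Delta Q(u)\log\Delta Q(u)\,du=n\int_{S_\nu}\log\Delta Q\,d\sigma,
\end{equation*}
so summing over $\nu$ produces the leading term $nE_Q[\sigma]$. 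For the fractional-part integrals, the estimate \eqref{Lip} yields $r_{\nu,\tau(m_{\nu-1})}=a_\nu+\bigO(1/n)$ and $r_{\nu,\tau(m_\nu)}=b_\nu+\bigO(1/n)$, so $\psi_\nu$ is constant (to leading order) on the thin intervals of length $x_{\nu-1}/n$ and $x_\nu/n$, giving
\begin{equation*}
\int_{m_{\nu-1}}^{M_{\nu-1}n}\psi_\nu\,dt=\frac{x_{\nu-1}}{n}\log\Delta Q(a_\nu)+\bigO(n^{-2}),\qquad \int_{m_\nu}^{M_\nu n}\psi_\nu\,dt=\frac{x_\nu}{n}\log\Delta Q(b_\nu)+\bigO(n^{-2}).
\end{equation*}
Summing over $\nu$ (using $x_{-1}=x_N=0$, since $M_{-1}n=0$ and $M_N n=n$) and reindexing, these collapse to $-\sum_{\nu=0}^{N-1}x_\nu\log\frac{\Delta Q(b_\nu)}{\Delta Q(a_{\nu+1})}$, matching the $x_\nu$-term in the claimed formula.

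Finally, I would collect the Euler--Maclaurin boundary corrections. Using the same limits $r_{\nu,\tau(m_\nu)}\to b_\nu$ and $r_{\nu,\tau(m_{\nu-1})}\to a_\nu$ (together with $r_{0,0}=a_0$ and $r_{N,1}=b_N$), a telescoping computation gives
\begin{equation*}
-\tfrac{1}{2}\sum_{\nu=0}^N\bigl(\psi_\nu(m_\nu)-\psi_\nu(m_{\nu-1})\bigr)=\tfrac{1}{2}\log\tfrac{\Delta Q(a_0)}{\Delta Q(b_N)}-\tfrac{1}{2}\sum_{\nu=0}^{N-1}\log\tfrac{\Delta Q(b_\nu)}{\Delta Q(a_{\nu+1})}+\bigO(n^{-1}).
\end{equation*}
Adding this to the fractional-part contribution produces the combined coefficient $-(x_\nu+\tfrac{1}{2})$ in front of $\log\tfrac{\Delta Q(b_\nu)}{\Delta Q(a_{\nu+1})}$, together with the boundary term $\tfrac{1}{2}\log\tfrac{\Delta Q(a_0)}{\Delta Q(b_N)}$, proving the lemma. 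The main obstacle is purely the book-keeping of which endpoints belong to which block (particularly the interplay between $m_\nu$ and $M_\nu n$ at each branching value), but once the substitution and \eqref{Lip} are in hand, everything reduces to the same scheme already used in Lemma \ref{l1}.
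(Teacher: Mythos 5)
Your proof follows the same approach as the paper's: Euler--Maclaurin with $d=1$ on each block, change of variables $u=r_{\nu,\tau(t)}$ for the bulk integral, then endpoint corrections for the fractional parts and the EM boundary term, telescoping over $\nu$. One small slip: the integral $\int_{m_{\nu-1}}^{M_{\nu-1}n}\psi_\nu\,dt$ is over a $t$-interval of length $x_{\nu-1}$ (not $x_{\nu-1}/n$), so it equals $x_{\nu-1}\log\Delta Q(a_\nu)+\bigO(n^{-1})$, without the extra $1/n$ factor and with error $\bigO(n^{-1})$ rather than $\bigO(n^{-2})$; your subsequent summation and final formula correctly drop the spurious $1/n$, so this is only a typo in the intermediate display.
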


\begin{proof}  We first use the Euler-Maclaurin formula (Theorem \ref{em_formula} with $d=1$) to write
\begin{equation}\label{em1}
\sum_{j=m_{\nu-1}}^{m_\nu-1} \log \Delta Q(r_{\nu,\tau(j)}) = \int_{m_{\nu-1}}^{m_\nu} \log \Delta Q(r_{\nu,\tau(t)})\, dt - \frac{1}{2}\log \frac{\Delta Q(r_{\nu,\tau(m_\nu)})}{\Delta Q(r_{\nu,\tau(m_{\nu-1})})}  + \bigO(\frac{1}{n}).
\end{equation}

The error term in \eqref{em1} comes from observing that $p_\nu(t):=\log \Delta Q(r_{\nu,\tau(t)})$ satisfies
\begin{align*}
p_\nu'(t) =
\frac 1 n \frac{\partial_r \Delta Q(r_{\nu,\tau(t)})}{\Delta Q(r_{\nu,\tau(t)})} \frac 1 {2r_{\nu,\tau(t)} \Delta Q(r_{\nu,\tau(t)})},\qquad \text{and}\qquad p_\nu''(t)=\bigO(n^{-2}),
\end{align*}
where we used \eqref{difft} and
$r_{\nu,\tau}\ge a_0>0$.

By the change of variables in $\eqref{change_var}$ we have, with $d\sigma=\Delta Q\,\1_S\, dA$,
\begin{align*}
\int_{M_{\nu-1}n}^{M_\nu n} \log \Delta Q(r_{\nu,\tau(t)})\, dt = n\int_{A(a_\nu,b_\nu)} \log \Delta Q\, d\sigma.
\end{align*}

In a similar way as in the proof of Lemma \ref{l1}, we deduce that
\begin{align*}
\int_{m_{\nu-1}}^{m_\nu}  \log \Delta Q(r_{\nu,\tau(t)})\, dt - \int_{nM_{\nu-1}}^{nM_\nu}   \log \Delta Q(r_{\nu,\tau(t)})\, dt =  x_{\nu-1} \log \Delta Q(a_\nu)  - x_\nu \log \Delta Q(b_\nu)+ \bigO(\frac{1}{n}).
\end{align*}

Combining the above estimates, we obtain
\begin{equation}\label{first2}
\sum_{\nu=0}^{N}\, \int_{m_{\nu-1}}^{m_\nu} \log \Delta Q(r_{\nu,\tau(t)})\, dt = n\int_{S} \log \Delta Q(z)\, d\sigma(z) -\sum_{\nu=0}^{N-1} x_\nu \log \frac{\Delta Q(b_\nu)}{\Delta Q(a_{\nu+1})} + \bigO(\frac{1}{n}).
\end{equation}

Finally, we note that
\begin{equation}\label{second2}
\sum_{\nu=0}^{N}  - \frac{1}{2}  \log \frac{\Delta Q(r_{\nu,\tau(m_\nu) })}{\Delta Q(r_{\nu,\tau(m_{\nu-1}) })} = \frac{1}{2} \log\frac{\Delta Q(a_0)}{\Delta Q(b_N)}
 -\frac{1}{2} \sum_{\nu=0}^{N-1} \log \frac{\Delta Q(b_\nu)}{\Delta Q(a_{\nu+1})} + \bigO(\frac{1}{n}).
\end{equation}	

Combining \eqref{em1} with \eqref{first2}, \eqref{second2}, we finish the proof of the lemma.
\end{proof}

\begin{lem} \label{l36} As $n\to\infty$, we have
\begin{align*}
\sum\limits_{\nu=0}^{N} \sum\limits_{j=m_{\nu-1}}^{m_\nu-1} \log r_{\nu,\tau(j)} = -\frac{n}{2}(q(b_N)-2\log b_N -q(a_0)) + \frac{1}{2}\log \frac{a_0}{b_N} - \sum\limits_{\nu=0}^{N-1} (x_\nu+\frac{1}{2}) \log \frac{b_\nu}{a_{\nu+1}} + \bigO(\frac{1}{n}).
\end{align*}
\end{lem}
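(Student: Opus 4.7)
\bigskip

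\noindent
\textbf{Proof proposal.} The approach mirrors the structure of the proof of Lemma \ref{l22}: apply the Euler-Maclaurin formula block-by-block, convert the resulting integrals to radial integrals via the change of variables $u = r_{\nu,\tau(t)}$, and then collect the endpoint corrections together with the ``block-shift'' corrections arising from replacing $[nM_{\nu-1}, nM_\nu]$ by $[m_{\nu-1}, m_\nu]$.

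First I would apply Theorem \ref{em_formula} with $d=1$ to the function $t \mapsto \log r_{\nu,\tau(t)}$. Using \eqref{difft} together with the fact that $r_{\nu,\tau(t)} \geq a_0 > 0$ in the annular case, one checks that both $(\log r_{\nu,\tau(t)})'$ and $(\log r_{\nu,\tau(t)})''$ are $\bigO(1/n)$ and $\bigO(1/n^2)$ respectively, so the Euler-Maclaurin error is $\bigO(1/n)$ per block. For the main integral on $[nM_{\nu-1}, nM_\nu]$, the substitution $u = r_{\nu,\tau(t)}$ with $dt = 2nu\Delta Q(u)\, du = \tfrac{n}{2}(uq'(u))'\, du$ (from $4\Delta Q = q'' + q'/r$) together with integration by parts gives
\begin{equation*}
\int_{nM_{\nu-1}}^{nM_\nu}\log r_{\nu,\tau(t)}\, dt = n M_\nu \log b_\nu - n M_{\nu-1}\log a_\nu - \frac{n}{2}\bigl(q(b_\nu) - q(a_\nu)\bigr),
\end{equation*}
since $b_\nu q'(b_\nu) = 2M_\nu$ and $a_\nu q'(a_\nu) = 2M_{\nu-1}$ (with the convention $M_{-1} = 0$).

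Next I would account for the difference between the intervals $[m_{\nu-1}, m_\nu]$ and $[nM_{\nu-1}, nM_\nu]$. By Lipschitz continuity of $t \mapsto \log r_{\nu,\tau(t)}$, the replacement generates a correction $x_{\nu-1}\log a_\nu - x_\nu \log b_\nu + \bigO(1/n)$ in each block; summing and using $x_{-1} = x_N = 0$, these correction terms collapse to $-\sum_{\nu=0}^{N-1} x_\nu \log \rho_\nu$. The Euler-Maclaurin boundary terms $-\tfrac{1}{2}(\log r_{\nu,\tau(m_\nu)} - \log r_{\nu,\tau(m_{\nu-1})})$ telescope similarly, contributing $\tfrac{1}{2}(\log a_0 - \log b_N) - \tfrac{1}{2}\sum_{\nu=0}^{N-1}\log \rho_\nu + \bigO(1/n)$, using that $r_{N,1} = b_N$, $r_{0,0} = a_0$, and $\log r_{\nu+1,M_\nu} - \log r_{\nu,M_\nu} = -\log\rho_\nu$.

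The main obstacle, and the step I would keep an eye on, is reconciling the sum $-\tfrac{n}{2}\sum_\nu\bigl(q(b_\nu) - q(a_\nu)\bigr) + n\sum_{\nu=0}^{N-1}M_\nu\log\rho_\nu$ produced above with the clean form $-\tfrac{n}{2}(q(b_N) - q(a_0))$ appearing in the lemma. This requires the gap identity
\begin{equation*}
q(a_{\nu+1}) - q(b_\nu) = -2M_\nu \log \rho_\nu, \qquad 0 \leq \nu \leq N-1,
\end{equation*}
which is immediate from the fact that in the open gap $b_\nu < |z| < a_{\nu+1}$ the obstacle function satisfies $\check Q(z) = 2M_\nu \log |z| + c_\nu$ for some constant $c_\nu$, and coincides with $Q$ at both endpoints $b_\nu$ and $a_{\nu+1}$. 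With this identity in hand, the telescoping succeeds and the remaining pieces assemble into the stated expression, completing the proof.
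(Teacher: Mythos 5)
Your proposal is correct and takes essentially the same approach as the paper's proof: Euler--Maclaurin with $d=1$ block by block, the change of variables $u = r_{\nu,\tau(t)}$ followed by an integration by parts, and endpoint/interval-shift corrections. The ``gap identity'' $q(a_{\nu+1}) - q(b_\nu) = -2M_\nu\log\rho_\nu$ that you flag as the crucial ingredient is exactly relation \eqref{equi} of the paper, rewritten: \eqref{equi} says $g_{M_\nu}(b_\nu) = g_{M_\nu}(a_{\nu+1})$, which unfolds to your identity. The only organizational difference is that the paper packages each block integral as $\tfrac{1}{2}\bigl(g_{M_{\nu-1}}(r_{\nu,M_{\nu-1}}) - g_{M_\nu}(r_{\nu,M_\nu})\bigr)$ and telescopes directly via \eqref{equi}, whereas you expand into $q$- and $M_\nu\log$-terms and then invoke the gap identity to collapse the sum; the content is identical.
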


\begin{proof} Using Theorem \ref{em_formula} with $d=1$, we get
\begin{equation}\label{app0}\begin{split}
\sum_{j=m_{\nu-1}}^{m_\nu -1} \log r_{\nu,\tau(j)} = \int_{m_{\nu-1}}^{m_\nu} &\log r_{\nu,\tau(t)}\, dt -\frac 1 2 \left(\log r_{\nu,\tau(m_\nu)} -\log r_{\nu,\tau(m_{\nu-1})}\right) + \bigO(\frac{1}{n}),
\end{split}
\end{equation}
where the error term is estimated in a similar way as in the proof of Lemma \ref{l22}.

We recall that $m_\nu=\lfloor M_\nu n\rfloor$ and use the change of variables \eqref{change_var} to deduce that
\begin{align}\label{expell1}
\int_{M_{\nu-1}n}^{M_{\nu}n} \log r_{\nu,\tau(t)}\, dt = n\int_{a_\nu}^{b_\nu} (\log s) 2s\Delta Q(s)\, ds.
\end{align}
A computation (using $4\Delta Q=q''+(1/s)q'$) gives
\begin{align*}\int\limits_{a_\nu}^{b_\nu} (\log s) 2s\Delta Q(s)\, ds&=M_\nu\log b_\nu-M_{\nu-1}\log a_\nu-\frac {q(b_\nu)-q(a_\nu)}{2} = \frac {g_{M_{\nu-1}}(r_{\nu,M_{\nu-1}})-g_{M_{\nu}}(r_{\nu,M_\nu})}{2}.
\end{align*}
A summation using that $g_{M_{\nu-1}}(r_{\nu,M_{\nu-1}})=g_{M_{\nu-1}}(r_{\nu-1,M_{\nu-1}})$,
$g_{1}(b_N)=q(b_N)-2\log b_N$ and $g_{0}(a_0)=q(a_0)$ now gives
\begin{align}\label{expell2}
\sum_{\nu=0}^{N} \int_{M_{\nu-1}n}^{M_{\nu}n} \log r_{\nu,\tau(t)}\, dt = -\frac{n}{2}(q(b_N)-2\log b_N -q(a_0)).
\end{align}
By similar computations we obtain
\begin{align*}
\int_{m_{\nu-1}}^{m_\nu} \log r_{\nu,\tau(t)}\, dt -	\int_{nM_{\nu-1}}^{nM_{\nu}} \log r_{\nu,\tau(t)}\, dt = -x_\nu \log b_\nu + x_{\nu-1}\log a_\nu + \bigO(\frac{1}{n}).
\end{align*}	
Combining the above, we conclude that
\begin{equation}\label{third1}\begin{split}
\sum_{\nu=0}^{N}\int_{m_{\nu-1}}^{m_\nu} \log r_{\nu,\tau(t)}\, dt =&  -\frac{n}{2}(q(b_N)-2\log b_N -q(a_0)) -\sum_{\nu=0}^{N-1} x_\nu \log \frac{b_\nu}{a_{\nu+1}} + \bigO(\frac{1}{n}).
\end{split}
\end{equation}
Finally, using that $|\tau(m_\nu)-M_\nu|<\frac 1 n$, we deduce that
\begin{equation}\label{third2}\begin{split}
-\frac{1}{2}\sum_{\nu=0}^{N} \Big(& \log r_{\nu,\tau(m_\nu)} -\log r_{\nu,\tau(m_{\nu-1})} \Big)= \frac{1}{2}\log \frac{a_0}{b_N} -\frac{1}{2}\sum_{\nu=0}^{N-1} \log \frac{b_\nu}{a_{\nu+1}} + \bigO(\frac{1}{n}).
\end{split}
\end{equation}
A combination of \eqref{app0} with \eqref{third1}, \eqref{third2} finishes the proof.
\end{proof}

\begin{lem} \label{ellcor}With $\ell(z)=2\log|z|$, we have
$\int_\C \ell\, d\sigma=-(q(b_N)-2\log b_N-q(a_0)).$
\end{lem}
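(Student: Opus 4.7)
The proof is immediate once we observe that the required quantity has essentially been computed already inside the proof of Lemma \ref{l36}. The plan is as follows.

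First I would reduce to an integral over radii: since $\sigma$ is radially symmetric with $d\sigma = \Delta Q \cdot \1_S\, dA$ and since $dA$ contains the factor $\frac{1}{\pi}$, integration over the angular variable yields
\begin{equation*}
\int_\C \ell\, d\sigma = \sum_{\nu=0}^N \int_{a_\nu}^{b_\nu} (2\log r)\, 2r\,\Delta Q(r)\, dr = 2\sum_{\nu=0}^N \int_{a_\nu}^{b_\nu} (\log s)\, 2s\,\Delta Q(s)\, ds.
\end{equation*}

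Next I would invoke the identity $4\Delta Q = q'' + r^{-1} q'$, which gives $2r\Delta Q = \tfrac{1}{2}(rq'(r))'$, and integrate by parts on each annulus $[a_\nu, b_\nu]$:
\begin{equation*}
\int_{a_\nu}^{b_\nu} (\log s)\, 2s\,\Delta Q(s)\, ds = \tfrac{1}{2}\bigl[(\log s)\, sq'(s)\bigr]_{a_\nu}^{b_\nu} - \tfrac{1}{2}\int_{a_\nu}^{b_\nu} q'(s)\, ds.
\end{equation*}
Using the boundary relations $b_\nu q'(b_\nu) = 2M_\nu$ and $a_\nu q'(a_\nu) = 2M_{\nu-1}$ (from Lemma \ref{lamm}, with the convention $M_{-1}=0$), the right-hand side equals $M_\nu \log b_\nu - M_{\nu-1} \log a_\nu - \tfrac{1}{2}(q(b_\nu)-q(a_\nu))$.

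Finally I would sum over $\nu$. This step is in fact already carried out in the passage leading to \eqref{expell2} in the proof of Lemma \ref{l36}, and the answer is
\begin{equation*}
\sum_{\nu=0}^N \int_{a_\nu}^{b_\nu} (\log s)\, 2s\,\Delta Q(s)\, ds = -\tfrac{1}{2}\bigl(q(b_N) - 2\log b_N - q(a_0)\bigr),
\end{equation*}
so multiplying by $2$ yields the claim. There is no real obstacle here: this is a bookkeeping lemma extracting a formula that the paper has already derived, and the only subtlety is the convention $M_{-1}^2 \log a_0 = 0$ when $a_0=0$, which is harmless since the boundary term $(\log a_0)\cdot a_0 q'(a_0)/2$ vanishes regardless of whether $a_0 = 0$ (because $M_{-1}=0$) or $a_0 > 0$ (because $q'(a_0)=0$ at the inner edge of an annular component with no mass inside).
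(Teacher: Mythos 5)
Your proposal is correct and follows essentially the same route as the paper: reduce to the radial integral $2\sum_\nu \int_{a_\nu}^{b_\nu}(\log s)\,2s\,\Delta Q(s)\,ds$, integrate by parts using $2r\Delta Q = \tfrac{1}{2}(rq')'$, apply the edge relations $b_\nu q'(b_\nu)=2M_\nu=a_{\nu+1}q'(a_{\nu+1})$, and telescope via the identity already established in the proof of Lemma \ref{l36} leading to \eqref{expell2}. The paper's proof is just the one-line citation of \eqref{expell1}--\eqref{expell2}; you spell out the intermediate integration by parts, which is a harmless expansion of the same argument.
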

\begin{proof}
Since $\int_\C\ell\, d\sigma= 2 \sum_{\nu=0}^N\int_{a_\nu}^{b_\nu}(\log s)2s\Delta Q(s)\, ds$, the statement follows from \eqref{expell1} and \eqref{expell2}.
\end{proof}

We continue by estimating the contribution coming from the perturbation $sh/n$, where $h$ is the smooth radially symmetric test-function in \eqref{begin}. (Here $s$ is an arbitrary real parameter; we will later choose it to satisfy $|s|\le \log n$.)

\begin{lem} \label{l37} As $n \to \infty$, we have, uniformly for $s\in\R$,
\begin{align*}
\sum_{\nu=0}^{N} \sum_{j=m_{\nu-1}}^{m_\nu-1}  sh(r_{\nu,\tau(j)})  &= n\int_{S} sh\, d\sigma + \frac{s(h(a_0)-h(b_N))}{2} + \sum_{\nu=0}^{N-1}  (x_\nu+\frac{1}{2})s(h(a_{\nu+1}) - h(b_{\nu}))  + \bigO(\frac{|s|}{n}).
\end{align*}
\end{lem}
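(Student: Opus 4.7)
The plan is to mirror the strategy used for Lemmas \ref{l22} and \ref{l36}, replacing $\log\Delta Q(r_{\nu,\tau(t)})$ (or $\log r_{\nu,\tau(t)}$) by $h(r_{\nu,\tau(t)})$ throughout. First I would apply the Euler--Maclaurin formula (Theorem \ref{em_formula}) with $d=1$ to each inner sum, writing
\begin{equation*}
\sum_{j=m_{\nu-1}}^{m_\nu -1} h(r_{\nu,\tau(j)}) = \int_{m_{\nu-1}}^{m_\nu} h(r_{\nu,\tau(t)})\,dt - \tfrac{1}{2}\bigl(h(r_{\nu,\tau(m_\nu)}) - h(r_{\nu,\tau(m_{\nu-1})})\bigr) + \bigO(n^{-1}).
\end{equation*}
The $\bigO(n^{-1})$ error is justified by differentiating: $\frac{d}{dt} h(r_{\nu,\tau(t)}) = \frac{h'(r_{\nu,\tau(t)})}{2nr_{\nu,\tau(t)}\Delta Q(r_{\nu,\tau(t)})} = \bigO(n^{-1})$ using \eqref{difft} and $r_{\nu,\tau(t)}\ge a_0>0$, and similarly the second derivative is $\bigO(n^{-2})$.

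Next I would evaluate the main integral by the change of variables \eqref{change_var} $u = r_{\nu,\tau(t)}$, giving
\begin{equation*}
\int_{nM_{\nu-1}}^{nM_{\nu}} h(r_{\nu,\tau(t)})\,dt = n\int_{a_\nu}^{b_\nu} h(u)\,2u\Delta Q(u)\,du = n\int_{S_\nu} h\,d\sigma,
\end{equation*}
and handle the two ``tails'' between $m_{\nu-1}$ and $nM_{\nu-1}$ (length $x_{\nu-1}$) and between $m_\nu$ and $nM_\nu$ (length $x_\nu$), each of length at most $1$. A first-order Taylor expansion of $h(r_{\nu,\tau(t)})$ around the nearby boundary value, together with \eqref{Lip}, yields
\begin{equation*}
\int_{m_{\nu-1}}^{m_\nu} h(r_{\nu,\tau(t)})\,dt - \int_{nM_{\nu-1}}^{nM_\nu} h(r_{\nu,\tau(t)})\,dt = x_{\nu-1}h(a_\nu) - x_\nu h(b_\nu) + \bigO(n^{-1}).
\end{equation*}

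Finally I would sum over $\nu=0,\ldots,N$. Using the conventions $x_{-1}=x_N=0$ and the fact that $r_{0,\tau(m_{-1})}=a_0$, $r_{N,\tau(m_N)}=b_N$, the integral contribution becomes $n\int_S h\,d\sigma + \sum_{\nu=0}^{N-1} x_\nu(h(a_{\nu+1})-h(b_\nu)) + \bigO(n^{-1})$, while the Euler--Maclaurin boundary terms telescope to
\begin{equation*}
-\tfrac{1}{2}\sum_{\nu=0}^N\bigl(h(b_\nu)-h(a_\nu)\bigr) + \bigO(n^{-1}) = \tfrac{1}{2}(h(a_0)-h(b_N)) + \tfrac{1}{2}\sum_{\nu=0}^{N-1}(h(a_{\nu+1})-h(b_\nu)) + \bigO(n^{-1}).
\end{equation*}
Adding these two contributions and multiplying by $s$ produces the desired formula, with the $(x_\nu+\tfrac{1}{2})$ factor emerging from the combination of the tail correction and the boundary telescoping. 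The only mild obstacle is the careful bookkeeping of the tail terms and boundary telescoping; there is no new analytic difficulty beyond what was already handled in Lemmas \ref{l22}--\ref{l36}, and uniformity in $s$ is immediate since $s$ enters only as a linear prefactor.
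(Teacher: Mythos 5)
Your proof is correct and follows exactly the route the paper indicates: the paper omits the proof, noting it is obtained from the proof of Lemma \ref{l22} by substituting $\log\Delta Q(r)$ with $sh(r)$, and your Euler--Maclaurin expansion, change of variables \eqref{change_var}, tail correction, and boundary telescoping reproduce that argument in full detail. Factoring out $s$ and reinstating it at the end also cleanly yields the claimed $\bigO(|s|/n)$ uniformity.
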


\begin{proof}
The proof is similar to the proof of Lemma \ref{l22} and is omitted (one only has to substitute ``$\log\Delta Q(r)$'' in the proof of Lemma \ref{l22} by ``$sh(r)$'').
\end{proof}

We next study the contribution coming from the terms $a_{\nu,j}$ in \eqref{anjk}. To this end, we shall use an argument based on Riemann sums in \cite[Section 2]{BKS}, but with a new twist to account for the contribution due to the terms involving the perturbation $h$.

It is convenient to denote, for suitable functions $f$, (with $S^\nu=A(a_\nu,b_\nu)$)
\begin{align}\label{ehnu}
e_{\nu,f}&:=\frac{1}{8\pi} \int_{\partial S^\nu } \partial_n f\, |dz|-\frac{1}{8\pi}\int_{\partial S^\nu} f(z) \frac{\partial_n \Delta Q(z)}{\Delta Q(z)}\, |dz| +\frac{1}{2} \int_{S^\nu} f(z) \Delta \log \Delta Q(z)\, dA(z),\\
\label{vhnu}
v_{\nu,f}&:=\frac{1}{4}\int_{S^\nu} |\nabla f(z)|^2\, dA(z).
\end{align}

Also recall the definition of $F_Q[S^\nu]$ from \eqref{Fq_ann}.

\begin{lem} \label{putt} Fix $\nu$ with $0\le \nu\le N$. As $n\to\infty$ we have, uniformly for $|s|\leq \log n$,
\begin{equation}\label{rsum}\begin{split}
\frac{1}{n}\sum_{j=m_{\nu-1}}^{m_\nu-1}  a_{\nu,j} &=
F_Q[S^\nu] - \frac{1}{4} \log \frac{\Delta Q(b_{\nu})}{\Delta Q(a_\nu)}  +\frac{1}{3} \log  \frac{b_\nu}{a_\nu}   \\
	& +\frac{s}{2} (h(b_\nu)-h(a_\nu)) +se_{\nu,h}+\frac {s^2} 2 v_{\nu,h}+\bigO(\frac{{1+s^{2}}}{n}).\\
\end{split}
\end{equation}
\end{lem}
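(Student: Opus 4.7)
The plan is a Riemann-sum argument with change of variables, adapting the corresponding single-component computation of \cite{BKS} to an annulus and extending it to accommodate the $s$-dependent corrections in \eqref{anjk}. First, apply the Euler--Maclaurin formula (Theorem \ref{em_formula} with $d=1$) to convert $\sum_{j=m_{\nu-1}}^{m_\nu-1}a_{\nu,j}$ into $n\int_{m_{\nu-1}/n}^{m_\nu/n} a_\nu(\tau)\,d\tau$ plus boundary terms of size $O(1+s^2)$ and an E--M remainder of size $O((1+s^2)/n)$, the $s^2$ arising because the $s^2 h'{}^2$-term in \eqref{anjk} is of that order. Replacing the endpoints $m_{\nu\pm1}/n$ by $M_{\nu\pm1}$ shifts by $x_{\nu\pm1}/n$ and costs a further $O((1+s^2)/n)$. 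Dividing by $n$,
$$
\frac{1}{n}\sum_{j=m_{\nu-1}}^{m_\nu-1} a_{\nu,j} = \int_{M_{\nu-1}}^{M_\nu} a_\nu(\tau)\, d\tau + O\!\left(\tfrac{1+s^2}{n}\right).
$$
Now change variables from $\tau$ to $u = r_{\nu,\tau}$ via Lemma \ref{evolve}, so that $d\tau = 2u\Delta Q(u)\,du$ and the integration runs over $[a_\nu,b_\nu]$.

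Substituting $d_2 = 4\Delta Q(u)$, $d_3 = 4\partial_r\Delta Q(u)-4\Delta Q(u)/u$ from \eqref{d234} into \eqref{anjk} and multiplying by $2u\Delta Q(u)$, the transformed integrand rearranges into
$$
2u\Delta Q(u)\calB(u) + \tfrac{s^2 uh'(u)^2}{4} + \tfrac{s u h''(u)}{4} + \tfrac{3s h'(u)}{4} - \tfrac{s u h'(u)\,\partial_r\Delta Q(u)}{4\Delta Q(u)},
$$
and I handle the four groups separately. For the $\calB$-piece I use $\partial_r^2\Delta Q/\Delta Q = (\log\Delta Q)''+[(\log\Delta Q)']^2$ to convert the first term of \eqref{br}, then integrate $u(\log\Delta Q)''$ by parts; collecting the coefficients $-\tfrac{1}{16},-\tfrac{19}{48},\tfrac{5}{48},\tfrac{1}{6}$ in front of the four resulting blocks and comparing with \eqref{Fq_ann} gives
$$2\int_{a_\nu}^{b_\nu}\calB(u)u\Delta Q(u)\,du = F_Q[S_\nu] - \tfrac{1}{4}\log\tfrac{\Delta Q(b_\nu)}{\Delta Q(a_\nu)} + \tfrac{1}{3}\log\tfrac{b_\nu}{a_\nu}.$$
The $s^2$-piece reduces to $\tfrac{s^2}{2}v_{\nu,h}$ since $\int_{S_\nu}|\nabla h|^2 dA = 2\int_{a_\nu}^{b_\nu} r h'(r)^2\,dr$ for radial $h$.

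The $s$-linear part is where the real work lies. One integration by parts gives $\int u h''\,du = [uh']_{a_\nu}^{b_\nu} - (h(b_\nu)-h(a_\nu))$, and using the radial identity $\tfrac{d}{du}[u(\log\Delta Q)'(u)] = 4u\Delta\log\Delta Q(u)$ (which follows from $4\Delta = \partial_r^2+r^{-1}\partial_r$) together with another integration by parts yields
$$\int_{a_\nu}^{b_\nu}u h'(u)\tfrac{\partial_r\Delta Q(u)}{\Delta Q(u)}\,du = \Bigl[uh(u)\tfrac{\partial_r\Delta Q(u)}{\Delta Q(u)}\Bigr]_{a_\nu}^{b_\nu} - 2\int_{S_\nu}h\,\Delta\log\Delta Q\,dA.$$
Converting the radial boundary values via $|dz|=rd\theta$ to $\tfrac{1}{8\pi}\int_{\partial S_\nu}(\cdot)|dz|$---with a sign flip at the inner circle $|z|=a_\nu$ because $\partial_n$ points radially inward there---the $s(h(b_\nu)-h(a_\nu))$-coefficients from the three $s$-linear sources add to $-\tfrac14+\tfrac12+\tfrac14=\tfrac12$, while the surviving boundary and bulk integrals reassemble precisely into $se_{\nu,h}$ as in \eqref{ehnu}. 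The one real obstacle is this final bookkeeping of signs, orientation conventions, and factors of $2\pi$; once that is settled, summing the four contributions yields exactly the stated formula.
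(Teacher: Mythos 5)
Your proposal is correct and follows essentially the same route as the paper: a Riemann-sum (equivalently Euler--Maclaurin $d=1$) reduction, the change of variables $u=r_{\nu,\tau}$ with $d\tau=2u\Delta Q(u)\,du$ from Lemma \ref{evolve}, and then a short sequence of integrations by parts to reassemble the radial integrals into $e_{\nu,h}$ and $v_{\nu,h}$; your transformed integrand and the treatment of the $s^{2}$- and $s$-linear terms agree with the paper's identities \eqref{lol7}--\eqref{lol10}. The one substantive difference is cosmetic: you re-derive the $\calB$-contribution $2\int_{a_\nu}^{b_\nu}\calB(u)u\Delta Q(u)\,du=F_Q[S_\nu]-\tfrac14\log\tfrac{\Delta Q(b_\nu)}{\Delta Q(a_\nu)}+\tfrac13\log\tfrac{b_\nu}{a_\nu}$ by hand, whereas the paper simply cites \cite[Lemma 2.2]{BKS}. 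A small bookkeeping slip: the three $s$-linear sources contribute $-\tfrac14$ and $+\tfrac34$ (not $-\tfrac14+\tfrac12+\tfrac14$) to the coefficient of $s(h(b_\nu)-h(a_\nu))$, though the total $\tfrac12$ you report is correct.
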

\begin{proof}
Recall the notation $d_\ell=d_{\ell,\nu,j}=g_{\tau(j)}^{(\ell)}(r_{\nu,\tau(j)})$. With $\calB(r)$ as in \eqref{br} we have
	\begin{equation}\label{sdep}\begin{split}
	a_{\nu,j} =& \calB(r_{\nu,\tau(j)})
	+ \frac{s^2h'(r_{k,\tau(j)})^2 }{2} \frac{1}{d_2}  + \frac{sh''(r_{k,\tau(j)})}{2}\frac{1}{d_2}+\frac{sh'(r_{k,\tau(j)})}{r_{k,\tau(j)}}\frac{1}{d_2}  -\frac{1}{2} sh'(r_{k,\tau(j)})\frac{d_3}{d_2^{2}}.
	\end{split}\end{equation}

We use a Riemann sum approximation followed by the observation that the inverse $\tau=\tau(r)$ to $r=r_{\nu,\tau}$ satisfies $\tau'(r)=2r\Delta Q(r)$ (see \eqref{difft}).
It follows that with $\sigma$ the equilibrium measure, we have
\begin{align*}\frac 1 n\sum_{j=m_{\nu-1}}^{m_\nu-1}\calB(r_{\nu,\tau(j)})&=\int_{M_{\nu-1}}^{M_\nu}\calB(r_{\nu,\tau})\, d\tau+\bigO(\frac 1 n)=\int_{S^\nu}\calB(r)\, d\sigma(z)+\bigO(\frac 1 n)\\
&=F_Q[S^\nu]- \frac{1}{4} \log\Big(\frac{\Delta Q(b_{\nu})}{\Delta Q(a_\nu)} \Big)+\frac{1}{3} \log\Big( \frac{b_\nu}{a_\nu}  \Big)+\bigO(\frac 1 n),\end{align*}
where the last equality follows from \cite[Lemma 2.2]{BKS}.

We next group together the terms in \eqref{sdep} which contain the parameter $s$.
Using a Riemann sum approximation, we find that these give the total contribution (for $|s|\le \log n$)
\begin{equation}\label{sim1}\begin{split}
&\frac 1 n \sum\limits_{j=m_{\nu-1}}^{m_{\nu}-1}\big[  \frac{s^2h'(r_{\nu,\tau(j)})^2 }{2} \frac{1}{d_2}  + \frac{sh''(r_{\nu,\tau(j)})}{2}\frac{1}{d_2}+\frac{sh'(r_{\nu,\tau(j)})}{r_{\nu,\tau(j)}}\frac{1}{d_2}  -\frac{1}{2} sh'(r_{\nu,\tau(j)})\frac{d_3}{d_2^{2}}\big] \\
 &=\int\limits_{M_{\nu-1}}^{M_{\nu}}  \frac{1}{2d_2} \Big( s^2 h'(r_{\nu,\tau})^2 + sh''(r_{\nu,\tau}) +2s\frac{h'(r_{\nu,\tau})}{r_{\nu,\tau}} - sh'(r_{\nu,\tau})\frac{d_3}{d_2}    \Big)\, d\tau+
  \bigO(\frac{|s|+s^2}{n})\\
  &=\int_{S^\nu}\frac 1 {4\Delta Q(r)}\Big(s^2 h'(r)^2+sh''(r)+2s\frac {h'(r)} r-sh'(r)\frac {\d_r\Delta Q(r)-r^{-1}\Delta Q(r)}
  {\Delta Q(r)}\big)\, d\sigma(z)\\
&  +
  \bigO(\frac{|s|+s^2}{n}),\\
\end{split}
\end{equation}
where we used
\eqref{d234}.

This further simplifies to
\begin{equation}\label{simpa}
\int\limits_{a_\nu}^{b_\nu}   \frac{r}{4} \Big( s^2 h'(r)^2 + sh''(r) +3s\frac{h'(r)}{r} - sh'(r) \frac{\partial_r \Delta Q(r)}{\Delta Q(r)}    \Big)\, dr + \bigO(\frac{|s|+s^2}{n}).
\end{equation}

In order to evaluate the integral in \eqref{simpa}, we use the following identities:
\begin{align}
& \frac{3s}{4} \int_{a_\nu}^{b_\nu} h'(r)\, dr = \frac{3s}{4} (h(b_\nu)-h(a_\nu)), \label{lol7}
\\
& \frac{s^2}{4} \int_{a_\nu}^{b_\nu} r h'(r)^2\, dr =  \frac{s^2}{2} \frac{1}{4}\int_{A(a_\nu,b_\nu)} |\nabla h(z)|^2\, dA(z)= \frac{s^{2}}{2}v_{\nu,h},
\label{lol8}
\\
& \frac{s}{4} \int_{a_\nu}^{b_\nu} r h''(r)\, dr = \frac{s}{4}\Big( b_\nu h'(b_\nu) - a_\nu h'(a_\nu)  \Big) -\frac{s}{4}\Big(h(b_\nu)-h(a_\nu) \Big), \label{lol9}
\\
& -\frac{s}{4} \int_{a_\nu}^{b_\nu} r h'(r) \frac{\partial_r \Delta Q(r)}{\Delta Q(r)}\, dr = -\frac{s}{8\pi} \int_{\partial S^\nu} h(z) \frac{\partial_n \Delta Q(z)}{\Delta Q(z)}\, |dz| + \frac{s}{2} \int_{S^\nu} h\, \Delta \log \Delta Q\, dA. \label{lol10}
\end{align}
Inserting these relations in \eqref{simpa}, we readily obtain \eqref{rsum}, finishing the proof.
\end{proof}

Until this point, we have naively summed according to blocks $\sum_{j=m_{\nu-1}}^{m_\nu-1} \log h_{\nu,j}$. For $j$ near the critical indices $m_{\nu-1}$, $m_\nu$ this approximation is not sufficiently accurate (compare \eqref{the text is hard to follow without this equation}).

The following lemma provides the necessary correction, and constitutes the point where the displacement term $\Osc_n$   enters the picture.

\begin{lem}\label{theta_contribution} For each $\nu$ with $0\le \nu\le N-1$, the term $T_\nu$ in \eqref{tndefn} obeys, as $n\to\infty$,
\begin{align*}
T_\nu
&= \log [(-\mu_\nu\rho_\nu;\rho_\nu^2)_\infty ] +\log [(-\mu_\nu^{-1}\rho_\nu;\rho_\nu^2)_\infty]
+\bigO(\frac {1+|s|%\log^{2} n
} n).
\end{align*}
Here $\rho_\nu$ and $\mu_\nu=\mu_\nu(s)$ are given by \eqref{rdef2} and the $\bigO$-constant is uniform  for $|s| \leq \log n$.
%for $|s|\le \log n$.
\end{lem}

\begin{proof}
	From Lemma \ref{qnormlem} we have, as $\tau(j)\to M_\nu$, with $m_\nu=\lfloor M_\nu n\rfloor$,
$$
	\frac{h_{\nu+1,j}}{h_{\nu,j}} = \frac{1}{\rho_\nu} \mu_\nu(s) \rho_\nu^{2(m_\nu -j)})\cdot \Big[1+ \bigO((1+|s|)(\tau(j)-M_\nu)) + \bigO(n\cdot (\tau(j)-M_\nu)^2)\Big].
$$
Let us write
$$
\tilde{T}_\nu := \sum\limits_{j=m_\nu}^{m_\nu+ L_n}  \log(1 + \frac{1}{\mu_\nu}\rho_\nu^{2(j-m_\nu)+1}   )   + \sum\limits_{j=m_\nu-L_n}^{m_{\nu}-1} \log(1+ \mu_\nu \rho_\nu^{2(m_\nu-j )-1 }).
$$
Using that $m_0(n)/n\to M_0>0$ while $\rho_\nu<1$ and
$n^{-c}\le \mu_\nu\le n^c$ for a suitable $c\ge 0$ (see \eqref{mudef}), we deduce the following estimate for \eqref{tndefn}, as $n\to\infty$
	\begin{equation}\label{tt}
T_\nu= \tilde{T}_\nu\cdot \bigg(1 + \bigO\bigg( \frac{1+|s|}{n}\sum_{j=0}^{L_{n}}j\rho^{j} \bigg)\bigg) = \tilde{T}_\nu\cdot \bigg(1 + \bigO\bigg( \frac{1+|s|}{n} \bigg)\bigg).
	\end{equation}
	
By changing the summation index, we have
	$$
	\tilde{T}_\nu=\sum\limits_{j=0}^{L_n}  \log(1 + \frac{1}{\mu_\nu}\rho_\nu^{2j+1}   )   + \sum\limits_{j=0}^{L_n-1} \log(1+ \mu_\nu \rho_\nu^{2(j+1 )-1 }) .
	$$
Using \eqref{limpoch}, we recognize that
	$$
	\tilde{T}_\nu = \log [(-\mu_\nu\rho_\nu;\rho_\nu^2)_\infty ] +\log [(-\mu_\nu^{-1}\rho_\nu;\rho_\nu^2)_\infty ] +\bigO(\rho_\nu^{2L_n}).
	$$

The desired asymptotic now follows from \eqref{tt} and the fact that $\tilde{T}_\nu=\bigO(1)$.
\end{proof}

 \begin{proof}[Proof of Theorem \ref{regular_expansion} for annular droplets] To derive the asymptotic for $\log Z_{n,sh}=\log(n!)+\sum_{0}^{n-1}\log h_j$ we first sum by blocks
 $\sum_{\nu=0}^N\sum_{j=m_{\nu-1}}^{m_\nu-1}\log h_{\nu,j}$ where
\begin{equation}\label{spit}\begin{split}\log h_{\nu,j}= &\frac 1 2 \log \frac {2\pi} n-ng_{\tau(j)}(r_{\nu,\tau(j)})-\frac 1 2 \log\Delta Q(r_{\nu,\tau(j)})+\log r_{\nu,\tau(j)}\\
&+sh(r_{\nu,\tau(j)})+\frac { a_{\nu,j}}n+\bigO(\frac {1} {n^2}).\\
\end{split}
\end{equation}

Summing the terms in the right side of \eqref{spit} using Lemmas \ref{l1}--
\ref{putt}, correcting for the $j$ near the critical indices $m_\nu$ by means of \eqref{the text is hard to follow without this equation} and Lemma \ref{theta_contribution}, and adding also $\log(n!)$, using Stirling's approximation
$\log(n!)=n\log n-n+\frac 1 2\log n+\frac 1 2 \log (2\pi)+\bigO(n^{-1})$, it is straightforward to finish the proof.
\end{proof}

\section{Central disk droplet with a Fisher-Hartwig singularity}\label{reg2}

We shall now adapt our proof in the previous section to the situation with a central disk component $\D_{b_0}=A(0,b_0)$ (sometimes denoted $S_0$). We therefore adopt the notation from Section \ref{Sec_Reg}, except that we now
assume $a_0=0$, and we allow the parameter $\alpha$ to be non-zero.

Our goal is to prove Theorem \ref{conical_expansion}, and, as a consequence, to prove the central disk part of
 Theorem \ref{regular_expansion}.

\subsection{Approximation scheme for the disk case}

Recall that $h_j$ is the squared norm
\begin{equation}\label{remind}
h_j=\|p_j\|^2=2\int_0^\infty r^{1+2\alpha}e^{sh(r)}e^{-ng_{\tau(j)}(r)}\, dr,
\end{equation}
where $\tau(j)=\frac j n$, $g_\tau(r)=q(r)-2\tau\log r$, and $q(r)=Q(r)$.

One verifies, without any changes in the proof, that the approximation $h_j=h_j^\sharp\cdot (1+\bigO(n^{-100}))$ in \eqref{norm} and \eqref{hjsharp} remains valid
for $j\ge m_0$, i.e., for components $S^\nu=A(a_\nu,b_\nu)$ with $\nu\ge 1$.

It therefore remains to study the sum $\sum_{j=0}^{m_0-1}\log h_j$. Following an idea in \cite{BKS}, we shall split the sum at $j=D_n$ where we take
\begin{equation}\label{dndef}D_n:=\lceil n^{\frac{1}{6}}\rceil.\end{equation}

We shall use different estimates for the two sums
\begin{align*}
\sum_{j=0}^{D_n-1}\log h_j,\qquad \text{and}\qquad \sum_{j=D_n}^{m_0 -1}\log h_j.
\end{align*}

We start with the following lemma.

\begin{lem}\label{low_sum1}
As $n\to \infty$, for $j=0,1,...,D_n-1$ and $|s|\le \log n$ we have
\begin{align*}
\log h_j = -n q(0) -(j+1+\alpha) \log  (n\Delta Q(0))  &+ \log \Gamma(j+1+\alpha)  + sh(0) + \bigO(n^{-\frac{1}{2}} (j+1)^{\frac{3}{2}} (\log n)^{3}).
\end{align*}
\end{lem}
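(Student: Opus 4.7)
The classical Laplace analysis of Lemma~\ref{explicit1} fails for very small $j$ because the critical point $r_{\tau(j)}$ of $g_{\tau(j)}(r)=q(r)-2(j/n)\log r$ collapses to the origin as $j/n\to 0$. The natural replacement is a Laplace-type expansion around $r=0$ after the rescaling $u=r\sqrt{n}$. Since $Q$ is $C^6$-smooth and radial in a neighbourhood of $0$, all odd radial derivatives of $q$ at the origin vanish, so
$$q(r) = q(0) + \Delta Q(0)\, r^2 + \tilde{q}(r),\qquad \tilde{q}(r)=O(r^4),\qquad h(r) = h(0) + O(r^2),$$
and hence $n[q(u/\sqrt{n})-q(0)] = \Delta Q(0)\,u^2 + O(u^4/n)$ while $s[h(u/\sqrt{n})-h(0)] = O(|s|u^2/n)$.

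Fix a small $r_0>0$ interior to $S_0$ and split the integral \eqref{remind} at $r_0$. By Lemma~\ref{urban}, together with the fact that $B_{\tau(j)}=q(0)+O(j/n)$ for small $\tau(j)$, the tail $r\ge r_0$ contributes a factor that is exponentially smaller than the main term. After the substitution $u=r\sqrt{n}$ in the piece $r<r_0$, the main contribution becomes
$$2n^{-(1+\alpha+j)}e^{-nq(0)}\int_0^{r_0\sqrt{n}} u^{1+2\alpha+2j}\,e^{sh(u/\sqrt{n})-\Delta Q(0)u^2-n\tilde{q}(u/\sqrt{n})}\,du.$$
Replacing the exponential correction $e^{sh(u/\sqrt{n})-n\tilde{q}(u/\sqrt{n})}$ by $e^{sh(0)}$ and extending the upper limit to $\infty$ gives
$$h_j\ \approx\ n^{-(1+\alpha+j)}e^{-nq(0)+sh(0)}\,\Delta Q(0)^{-(1+\alpha+j)}\,\Gamma(j+1+\alpha),$$
whose logarithm matches the claim.

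To bound the error, I would Taylor-expand the correction $e^{s[h(u/\sqrt{n})-h(0)]-n\tilde{q}(u/\sqrt{n})}$ in the integrand. Under the probability measure $d\mu\propto u^{1+2\alpha+2j}e^{-\Delta Q(0)u^2}\,du$ on $(0,\infty)$, the moments of $u^k$ are of order $(j+1)^{k/2}$, so the relative contributions of $s[h(u/\sqrt{n})-h(0)]$ and $n\tilde{q}(u/\sqrt{n})$ are of order $|s|(j+1)/n$ and $(j+1)^2/n$ respectively. Under the hypotheses $|s|\leq\log n$ and $j+1\le n^{1/6}+1$, both are easily dominated by the stated error $O(n^{-1/2}(j+1)^{3/2}(\log n)^3)$. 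The main technical obstacle is maintaining uniformity in both $j$ (up to $D_n$) and $s$ (up to $\log n$); this is achieved by a careful termwise Taylor expansion of the exponent combined with the moment bounds for $d\mu$ above, while the overall structure is straightforward given the reduction to the rescaled Gaussian integral.
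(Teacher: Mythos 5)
Your proof is correct and follows the same route as the paper's own argument, which consists of citing \cite[Lemma 3.1]{BKS} for the case $s=\alpha=0$ and remarking that the adaptations for general $s,\alpha$ are straightforward; you have simply filled in those details. In particular, the rescaling $u=r\sqrt{n}$, the reduction to the Gamma integral $\int_0^\infty u^{1+2\alpha+2j}e^{-\Delta Q(0)u^2}\,du=\tfrac12\Delta Q(0)^{-(1+\alpha+j)}\Gamma(j+1+\alpha)$, and the moment bound $\langle u^k\rangle\sim(j+1)^{k/2}$ used to control the Taylor corrections (the odd terms vanishing by radial $C^6$-smoothness of $q$ and $C^4$-smoothness of $h$ at the origin) are precisely the ingredients of the cited argument, and your estimate of the remainder is in fact slightly sharper than the $\bigO\bigl(n^{-1/2}(j+1)^{3/2}(\log n)^3\bigr)$ claimed, so the stated bound follows.
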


\begin{proof}
In the case $s=0$ and $\alpha=0$, this is just \cite[Lemma 3.1]{BKS}. The adaptations for general $s$ and $\alpha$ are straightforward and we omit details.
\end{proof}

The lemma gives the estimate
\begin{align*}
\sum_{j=0}^{D_n-1}\log h_j =& -nD_n q(0)-\frac {D_n(D_n+1+2\alpha)}2 \log(n\Delta Q(0))+D_n sh(0) + \log \frac{G(D_n+1+\alpha)}{G(1+\alpha)}+\calE_n,
\end{align*}
where $G$ is Barnes $G$-function and $\calE_n=\bigO(\frac {D_n^{\frac 5 2}(\log n)^{3}} {\sqrt{n}})$. Using well-known asymptotics
for $G(n+1)$ in \cite[Eq. (5.17.5)]{NIST} we deduce the following result; a simple generalization of the special case $s=0$ and $\alpha=0$ given in \cite[Lemma 3.3]{BKS}.

\begin{lem} \label{low_sum} For $|s|\le \log n$ we have, as $n\to\infty$,
\begin{align*}
\sum\limits_{j=0}^{D_n-1} \log  h_j =& -D_n n  q(0)-\frac{D_n(D_n+1+2\alpha)}{2}\log (n\Delta Q(0))\\
&+ \frac{1}{2}D_n^2 \log D_n - \frac{3}{4} D_n^2 + \alpha D_{n} \log D_{n} + D_n \bigg(  s h(0) + \frac{\log(2\pi)}{2} - \alpha  \bigg)\\
  &+  \frac{6\alpha^{2}-1}{12} \log D_n  +  \frac{\alpha}{2} \log(2\pi)   + \zeta'(-1)  -  \log G(1  +  \alpha)+\calE_n,
\end{align*}
where $\zeta(s)$ is Riemann's zeta function and $\calE_n=\bigO(n^{-\frac 1 {12}}(\log n)^3)$.
\end{lem}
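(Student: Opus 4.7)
The plan is to apply Lemma \ref{low_sum1} term by term and then compute each of the resulting sums explicitly.

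First, I would sum the estimate for $\log h_j$ from Lemma \ref{low_sum1} over $j = 0, 1, \ldots, D_n-1$. This decomposes naturally into four contributions plus an error:
\begin{align*}
\sum_{j=0}^{D_n-1}\log h_j &= -D_n n q(0) - \log(n\Delta Q(0))\sum_{j=0}^{D_n-1}(j+1+\alpha) \\
&\quad + \sum_{j=0}^{D_n-1}\log\Gamma(j+1+\alpha) + D_n\, sh(0) + \calE,
\end{align*}
where the second sum equals $\tfrac{D_n(D_n+1+2\alpha)}{2}$. The error is bounded, using $D_n = \lceil n^{1/6}\rceil$ and $|s|\le \log n$, by
\[
\calE = \bigO\!\left(n^{-1/2}(\log n)^{3}\sum_{j=0}^{D_n-1}(j+1)^{3/2}\right) = \bigO\!\left(n^{-1/2}D_n^{5/2}(\log n)^{3}\right)=\bigO(n^{-1/12}(\log n)^{3}).
\]

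Next, I would invoke the Barnes $G$-function recursion $G(z+1)=\Gamma(z)G(z)$, which telescopes the product:
\[
\sum_{j=0}^{D_n-1}\log\Gamma(j+1+\alpha) = \log G(D_n+1+\alpha) - \log G(1+\alpha).
\]
The main work is then to expand $\log G(D_n+1+\alpha)$ via the classical asymptotic \cite[Section 5.17]{NIST}
\[
\log G(z+1) = \tfrac{z^{2}}{2}\log z - \tfrac{3z^{2}}{4} + \tfrac{z}{2}\log(2\pi) - \tfrac{1}{12}\log z + \zeta'(-1) + \bigO(z^{-2}),
\]
applied with $z = D_n+\alpha$. Writing $\log(D_n+\alpha)=\log D_n+\alpha/D_n-\alpha^{2}/(2D_n^{2})+\bigO(D_n^{-3})$ and expanding $(D_n+\alpha)^{2}$, a careful bookkeeping gives
\begin{align*}
\log G(D_n+1+\alpha) &= \tfrac{1}{2}D_n^{2}\log D_n - \tfrac{3}{4}D_n^{2} + \alpha D_n\log D_n + D_n\!\left(\tfrac{\log(2\pi)}{2}-\alpha\right) \\
&\quad + \tfrac{6\alpha^{2}-1}{12}\log D_n + \tfrac{\alpha}{2}\log(2\pi) + \zeta'(-1) + \bigO(D_n^{-1}).
\end{align*}
Note that the quadratic-in-$\alpha$ constants cancel: the $\tfrac{3\alpha^{2}}{4}$ coming from $\tfrac{z^{2}}{2}\log z$ via the $\log(1+\alpha/D_n)$ Taylor expansion cancels against $-\tfrac{3\alpha^{2}}{4}$ coming from $-\tfrac{3z^{2}}{4}$.

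Finally I would assemble the pieces and observe that the remainder term $\bigO(D_n^{-1})=\bigO(n^{-1/6})$ is absorbed by $\bigO(n^{-1/12}(\log n)^{3})$. The resulting expression matches the statement of Lemma \ref{low_sum} verbatim. The only real obstacle is the arithmetic of the $G$-function expansion around the shifted argument $D_n+\alpha$; everything else is a direct summation. No single step is conceptually hard, but one must be meticulous to confirm that the $\alpha^{2}$-terms collapse to the coefficient $\tfrac{6\alpha^{2}-1}{12}$ of $\log D_n$ with no stray constants surviving.
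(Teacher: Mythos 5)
Your proof is correct and follows essentially the same route as the paper: sum the estimate from Lemma \ref{low_sum1}, telescope the $\log\Gamma$ terms into $\log G(D_n+1+\alpha)-\log G(1+\alpha)$ via the Barnes recursion, and expand $\log G(D_n+1+\alpha)$ using the standard asymptotic from \cite{NIST}. The paper states these same steps rather tersely (deferring to \cite[Lemma 3.3]{BKS} for the $\alpha=0$ case), so your careful tracking of the $\alpha$-dependent terms and the cancellation of the $\tfrac{3\alpha^2}{4}$ constants just makes explicit what the paper leaves implicit.
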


Next we recall the notation
$$\ell(z)=2\log|z|,\qquad k(z)=sh(z)+\alpha\ell(z).$$

Assume that $j$ satisfies $D_n\le j\le m_0+L_n$, where $L_n=C\log n$ and let
$$r=r_{0,\tau(j)}$$ be the solution to $rq'(r)=2\tau(j)$ with $r$ in the vicinity of the interval $[0,b_0]$, where $\tau(j)=j/n$.

We write $h_{0,j}$ for the approximation to $h_j$
in \eqref{norm} but with the perturbation ``$sh$'' replaced by ``$k$'', i.e.,
\begin{equation}h_{0,j}:=2\int_{J_{0,j}}r^{1+2\alpha}e^{sh(r)}e^{-ng_{\tau(j)}(r)}\, dr,\end{equation}
where $J_{0,j}=\{r\ge 0\,;\, |r-r_{0,\tau(j)}|<\eps_n\}$ and $\eps_n$ as in \eqref{dnen}.

The Laplace method in Lemma \ref{explicit1} gives the following approximation.

\begin{lem} \label{bkshigh} For $D_n\le j\le m_0+L_n$, we have
\begin{equation}\label{normzero}
h_{0,j}=\sqrt{\frac {2\pi} n} \frac {r_{0,\tau(j)}} {\sqrt{\Delta Q(r_{0,\tau(j)})}}  e^{k(r_{0,\tau(j)})}e^{-ng_{\tau(j)}(r_{0,\tau(j)})}\cdot
\left(1+\frac {a_{0,j}} n+\epsilon\right),\end{equation}
where the term $a_{0,j}$ is given in \eqref{anjk} but with $sh$ replaced by $k=sh+\alpha\ell$, i.e.,
\begin{align}\label{correct}a_{0,j}=\calB(r_{0,\tau(j)})
+ \frac{k'(r_{0,\tau(j)})^2 }{2} \frac{1}{d_{2}}  + \frac{k''(r_{0,\tau(j)})}{2}\frac{1}{d_{2}} +\frac{k'(r_{0,\tau(j)})}{r_{\nu,\tau(j)}}\frac{1}{d_{2}}  -\frac{k'(r_{0,\tau(j)})}{2} \frac{d_{3}}{d_{2}^{2}},
\end{align}
where $\mathcal{B},d_2,d_3$ are defined as in \eqref{br},\eqref{dlnk}, while
$$\epsilon=\bigO(j^{-\frac{3}{2}} (\log n)^{c} ),$$
where $c>0$ may be chosen arbitrarily small.

For ``large'' $j$, say $j\ge c_0n$ where $c_0>0$, \eqref{normzero} holds with the better bound $\epsilon=\bigO(n^{-2})$.
\end{lem}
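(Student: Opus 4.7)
The plan is to apply Laplace's method (Lemma~\ref{explicit1}) directly to
\begin{equation*}
h_{0,j} = 2\int_{J_{0,j}} r^{1+2\alpha}e^{sh(r)}e^{-ng_{\tau(j)}(r)}\, dr = 2\int_{J_{0,j}} r\, e^{k(r)}\, e^{-ng_{\tau(j)}(r)}\, dr,
\end{equation*}
where the Fisher--Hartwig factor $r^{2\alpha}$ has been absorbed into the exponent via $k(r) = sh(r) + \alpha\ell(r)$, yielding a smooth amplitude $f(r) = 2r e^{k(r)}$ away from the origin. The peak $r_0 := r_{0,\tau(j)}$ satisfies $r_0 q'(r_0) = 2\tau(j)$; combined with the expansion $q'(r) = 4\Delta Q(0) r + \bigO(r^2)$ near $0$ (valid by strict subharmonicity $\Delta Q(0)>0$), this forces $r_0 \sim \sqrt{j/(2n\Delta Q(0))}$, and in particular $r_0 \gtrsim n^{-5/12}$ whenever $j \ge D_n = \lceil n^{1/6}\rceil$. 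This comfortably dominates both the Gaussian width $(nd_2)^{-1/2} \sim n^{-1/2}$ and the localization scale $\eps_n = \sqrt{C(\log n)/n}$, so the Taylor expansion of $f$ at $r_0$ is well-controlled.

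The leading asymptotic from Lemma~\ref{explicit1} produces the prefactor displayed in~\eqref{normzero}. For the $1/n$-correction coefficient $a_{0,j}$, one computes
\begin{equation*}
\frac{f'(r_0)}{f(r_0)} = \frac{1}{r_0} + k'(r_0), \qquad \frac{f''(r_0)}{f(r_0)} = k''(r_0) + k'(r_0)^2 + \frac{2 k'(r_0)}{r_0},
\end{equation*}
and substitutes these into the general formula~\eqref{ADdef}. Peeling off the contribution obtained by setting $k \equiv 0$, which is precisely the function $\calB(r_0)$ of~\eqref{br} via the computation already done in \cite[Lemma~2.2]{BKS}, leaves after straightforward algebra exactly the four $k$-dependent summands of~\eqref{correct}.

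The main obstacle is to establish the claimed error bound uniformly across the full range $D_n \le j \le m_0 + L_n$, where $r_0$ may be as small as $n^{-5/12}$. For the ``large $j$'' regime $j \ge c_0 n$, the point $r_0$ is bounded below by a positive constant, $k$ and all its derivatives are uniformly bounded near $r_0$, and the standard Laplace remainder bound $\bigO(n^{-2})$ applies immediately. For the delicate regime $D_n \le j < c_0 n$, one must track the $r_0$-dependence of higher derivatives: $\ell^{(m)}(r_0)$ and $\d_r^m \Delta Q(r_0)/\Delta Q(r_0)$ scale like $r_0^{-m}$. After the change of variable $r = r_0 + v/\sqrt{n d_2}$ restricted to $|v| \lesssim \sqrt{\log n}$ (the tail $|v| \gtrsim \sqrt{\log n}$ being super-polynomially small by Lemma~\ref{loc_lem}), the $m$-th Taylor contribution of $f$ or $g_{\tau(j)}$ is bounded by $v^m/(r_0 \sqrt{n})^{m-2} \lesssim ((\log n)/j)^{(m-2)/2}$ using $n r_0^2 \gtrsim j$. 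All terms through total order $1/n$ are absorbed into the correction $a_{0,j}/n$; the next-order Laplace term has coefficient $\lesssim r_0^{-4}(\log n)^c$ and hence contributes $\bigO((\log n)^c/(r_0^2 n)^2) = \bigO((\log n)^c/j^2)$, which is dominated by $\bigO((\log n)^c/j^{3/2})$ for $j \ge 1$. This yields $\epsilon = \bigO(j^{-3/2}(\log n)^c)$, with $c > 0$ as small as desired once the polylogarithmic factors are absorbed into a negligible negative power of $j$.
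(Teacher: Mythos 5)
Your proof is essentially correct and follows the same route the paper intends (Laplace's method with the amplitude $f(r)=2r^{1+2\alpha}e^{sh(r)}$ rewritten as $2re^{k(r)}$), though the paper's own proof is extremely terse — it simply cites \cite[Lemma 3.2]{BKS} for the case $s=\alpha=0$ and asserts the adaptations are straightforward. You supply those adaptations: the algebraic reduction via $f'/f = 1/r_0 + k'$ and $f''/f = k''+k'^2+2k'/r_0$, the identification of the $k$-independent part with $\calB(r_0)$, and the uniform error control over the delicate range $D_n\le j < c_0 n$, all of which are correct in substance.

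Two small quibbles, neither of which affects the argument. First, you wrote $q'(r)=4\Delta Q(0)r+\bigO(r^2)$; since $q'(0)=0$ and $4\Delta Q = q''+r^{-1}q'$ gives $q''(0)=2\Delta Q(0)$, the correct expansion is $q'(r)=2\Delta Q(0)r+\bigO(r^2)$, and hence $r_0\sim\sqrt{j/(n\Delta Q(0))}$ (consistent with the paper's \eqref{smalltau}), not $\sqrt{j/(2n\Delta Q(0))}$; this is a constant-factor error irrelevant to the order estimates you actually use. Second, your analysis yields the stronger bound $\epsilon=\bigO(j^{-2}(\log n)^{c})$, strictly better than the stated $\bigO(j^{-3/2}(\log n)^{c})$; you correctly note the latter then follows, but it is worth being aware you have proved something sharper (the paper, following BKS, simply quotes $j^{-3/2}$ without optimizing). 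The key geometric observation — that for $j\ge D_n=\lceil n^{1/6}\rceil$ the peak $r_0\gtrsim n^{-5/12}$ dominates the localization window $\eps_n\sim n^{-1/2}(\log n)^{1/2}$, so the branch point of $r^{1+2\alpha}$ at the origin is never probed and the derivative scalings $\ell^{(m)}(r_0)\sim r_0^{-m}$, $g^{(m)}_{\tau(j)}(r_0)\sim r_0^{2-m}$ give an effective small parameter $1/(nr_0^2)\sim 1/j$ — is exactly the point that makes the ``straightforward adaptation'' work, and you have articulated it well.
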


\begin{proof}
The case $s=\alpha=0$ is found in \cite[Lemma 3.2]{BKS}. The adaptations needed for the general case are straightforward, and are omitted.
\end{proof}

Moreover, if for some $\nu\ge 1$ we have $m_{\nu-1}-L_n\le j\le m_\nu+L_n$, we write $r_{\nu,\tau(j)}$ for the unique solution to $rq'(r)=2\tau(j)$ in the vicinity of the interval $a_\nu\le r\le b_\nu$. In this situation we write
\begin{equation}h_{\nu,j}:=2\int_{J_{\nu,j}}r^{1+2\alpha}e^{sh(r)}e^{-ng_{\tau(j)}(r)}\, dr,\end{equation}
where $J_{\nu,j}=\{r\ge 0\,;\, |r-r_{\nu,\tau(j)}|<\eps_n\}$. In this case, the analogue of \eqref{normzero} holds with $\epsilon=\bigO(n^{-2})$, and the formula \eqref{correct} holds with the index ``$0$'' replaced by ``$\nu$''.

For $D_n\le j\le n-1$ we next define $h_j^\sharp$ by $h_j^\sharp=h_{\nu,j}$ if $m_{\nu-1}+L_n<j<m_\nu-L_n$ and $h_j^\sharp=h_{\nu,j}+h_{\nu+1,j}$ if $|j-m_\nu|\le L_n$. (Here $h_{N+1,j}:=0$.)

The following result is a direct consequence of Lemma \ref{loc_lem}, Lemma \ref{bkshigh} and the above argument.

\begin{lem} For $D_n\le j\le n-1$, the squared norm \eqref{remind} satisfies $h_j=h_j^\sharp\cdot (1+\bigO(n^{-100}))$.
\end{lem}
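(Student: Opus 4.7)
The plan is to invoke Lemma \ref{loc_lem} directly and match its conclusion to the definition of $h_j^\sharp$. Nothing in the statement asserts the specific Laplace-type shape of $h_{\nu,j}$; it is purely a localization claim, saying that $h_j$ and $h_j^\sharp$ agree up to a relative error $\bigO(n^{-100})$.

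First I would verify, for each $j$ in the range $D_n\le j\le n-1$, what the set of significant local peaks $\SignPeak(\tau(j))$ looks like. By Proposition \ref{reader} (or the argument of Lemma \ref{onepk}), and using that $S=S^*$ so no shallow outposts are present, the following dichotomy holds: if $|j-m_\nu|\ge L_n$ for every $\nu=0,\ldots,N-1$, then $\SignPeak(\tau(j))$ consists of the single peak $r_{\nu,\tau(j)}$ sitting in the interior of the unique component $S_\nu$ containing it; otherwise $|j-m_\nu|<L_n$ for some $\nu$ and $\SignPeak(\tau(j))\subseteq\{r_{\nu,\tau(j)},r_{\nu+1,\tau(j)}\}$, both points lying close to $b_\nu$ and $a_{\nu+1}$ respectively.

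Next I would check that the central disk component $S_0=\D_{b_0}$ causes no trouble under the cutoff $j\ge D_n$. Since $\tau(j)\ge D_n/n\ge n^{-5/6}$ and $rq'(r)\sim 2\Delta Q(0)\,r^2$ near the origin, one obtains $r_{0,\tau(j)}\gtrsim n^{-5/12}$, which comfortably exceeds $\eps_n=\sqrt{C(\log n)/n}$. Consequently the interval $J_{0,j}$ lies strictly inside $(0,\infty)$ and the quadratic lower bound of Lemma \ref{urban} applies uniformly, even though the Fisher-Hartwig weight $r^{1+2\alpha}$ has a mild singularity at the origin (integrable since $\alpha>-1$). With $\SignPeak(\tau(j))$ thus identified, the localized integral $I_{\tau(j)}^\sharp(n)$ from Lemma \ref{loc_lem} equals $h_{\nu,j}$ in the single-peak case and $h_{\nu,j}+h_{\nu+1,j}$ in the two-peak case, which is precisely $h_j^\sharp$ as defined. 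Lemma \ref{loc_lem} then yields $h_j=I_{\tau(j)}(n)=I_{\tau(j)}^\sharp(n)\bigl(1+\bigO(n^{-100})\bigr)=h_j^\sharp\bigl(1+\bigO(n^{-100})\bigr)$ with the error uniform in $|s|\le\log n$.

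The main (essentially only) obstacle is making sure the peak-set analysis is exhaustive throughout $[D_n,n-1]$ and that the significant peaks really are those suggested by the definition of $h_j^\sharp$. The choice of cutoff $D_n=\lceil n^{1/6}\rceil$ is what keeps the central-disk peak a safe distance from the origin; for $j<D_n$, the peak is too close to $0$ for this Laplace-type analysis to survive, and one must instead expand $q$ about the origin as in Lemma \ref{low_sum}.
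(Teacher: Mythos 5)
Your proposal is correct and follows essentially the same route as the paper, which states that the lemma is a direct consequence of Lemma \ref{loc_lem}, Lemma \ref{bkshigh} and the surrounding discussion: both arguments combine Lemma \ref{loc_lem} with the structure of $\SignPeak(\tau(j))$ supplied by Lemma \ref{onepk} (here $S=S^*$, so there are no outposts to worry about). Your additional check that $r_{0,\tau(j)}\gtrsim n^{-5/12}\gg\eps_n$ for $j\ge D_n$ is not strictly required for the pure localization statement (Lemma \ref{loc_lem} is asserted uniformly in $\tau$), but it is precisely the observation that makes the Laplace-type expansion of Lemma \ref{bkshigh} viable on the same range of $j$.
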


The next lemma is somewhat similar to \cite[Lemma 3.7]{BKS}, but differs sufficiently to require a new analysis.

\begin{lem}\label{high_sum} With $k=sh+\alpha\ell$ and notation as in Section \ref{fssub}, we have, as $n\to\infty$,
\begin{align*}\sum\limits_{j=D_n}^{n-1} &\log h_{j}=-n^2I_{Q}[\sigma]+\frac {n-D_n}2\log\frac {2\pi} n-\frac n 2E_Q[\sigma]+F_Q[\sigma]+n\int_\C k\, d\sigma\\
&+e_k+\frac {s^2} 2 v_{0,h}+\frac 12 \sum_{\nu=1}^Nv_{\nu,k}+\Osc_n(s,\alpha)+\alpha s (h(b_{0})-h(0))\\
&+nD_nq(0)+\frac 3 4 D_n^2-\frac 1 2(D_n^2+D_n)\log \frac 1 {\Delta Q(0)}-\frac 1 2\left(D_n^2-\frac 1 6\right)\log\frac {D_n} n-D_nsh(0)\\
&-\alpha D_n\log\left(\frac {D_n}{n\Delta Q(0)}\right)+\alpha D_n-\frac {\alpha^2} 2\log\frac {D_n}n+\frac {\alpha^2}2\log (b_0^2\Delta Q(0)) +\calE_n,
\end{align*}
where $\calE_n=\bigO(n^{-\frac 1 {12}}(\log n)^3)$.
\end{lem}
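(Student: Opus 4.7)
The plan is to follow the strategy of Section \ref{Sec_Reg}, adapted for (i) the central disk block $\nu=0$ whose sum now starts at $j=D_n$ rather than at $j=0$, and (ii) the replacement $sh \rightsquigarrow k = sh + \alpha\ell$ produced by the factor $r^{2\alpha}$ in \eqref{remind}. Using $h_j = h_j^{\sharp}(1+\bigO(n^{-100}))$ for $j\ge D_n$ and the critical-index splitting analogous to \eqref{the text is hard to follow without this equation}, I first write
\begin{align*}
\sum_{j=D_n}^{n-1}\log h_j = \sum_{j=D_n}^{m_0-1}\log h_{0,j} + \sum_{\nu=1}^{N}\sum_{j=m_{\nu-1}}^{m_\nu-1}\log h_{\nu,j} + \sum_{\nu=0}^{N-1} T_\nu + \bigO(n^{-99}).
\end{align*}
From Lemma \ref{bkshigh} (and its analogue for $\nu\ge 1$, with the sharper error $\bigO(n^{-2})$), each $\log h_{\nu,j}$ decomposes as in \eqref{spit} but with $sh$ replaced by $k$ and with $a_{\nu,j}$ given by \eqref{correct}.

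For the outer blocks $\nu\ge 1$ the function $k$ is smooth on $S_\nu$, so the reasoning of Lemmas \ref{l1}--\ref{putt} applies verbatim with $sh\mapsto k$, producing the contributions to $-n^2 I_Q[\sigma]$, $-\frac n 2 E_Q[\sigma]$, $F_Q[\sigma]$, $n\int k\,d\sigma$, $e_k$ and $\frac 1 2 v_{\nu,k}$ supported on $S_\nu$; here the full $v_{\nu,k}$ arises because the expansion $|k'|^2 = s^2|h'|^2 + 2s\alpha\, h'\ell' + \alpha^2|\ell'|^2$ may be integrated over $S_\nu$ term by term using \eqref{lol7}--\eqref{lol10}. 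The critical-index corrections $T_\nu$ are handled by the proof of Lemma \ref{theta_contribution}, which goes through unchanged once $\mu_\nu$ is taken from the $\alpha$-dependent definition \eqref{mudef}; this yields precisely $\Osc_n(s,\alpha)$.

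For the central block the Euler-Maclaurin formula (Theorem \ref{em_formula}) applied to each of the five sums with lower cutoff $t=D_n$ generates the same bulk contributions on $[0,M_0]$ as in the annular case, together with an additional set of boundary terms at $t=D_n$. These boundary contributions are evaluated from the local expansions near the origin
\begin{align*}
r_{0,\tau}^2 = \tfrac{\tau}{\Delta Q(0)}(1+\bigO(\tau)),\quad \Delta Q(r_{0,\tau}) = \Delta Q(0)(1+\bigO(\tau)),\quad g_\tau(r_{0,\tau}) = q(0)+\tau-\tau\log\tfrac{\tau}{\Delta Q(0)}+\bigO(\tau^2),
\end{align*}
together with the corresponding expansions for the first few derivatives. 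The boundary term at $t=D_n$ coming from $-n\int g_{\tau(t)}(r_{0,\tau(t)})\,dt$ yields $nD_n q(0) + \frac 3 4 D_n^2 - \frac 1 2(D_n^2+D_n)\log\frac 1 {\Delta Q(0)} -\frac 1 2 (D_n^2 - \tfrac 1 6)\log\frac{D_n}{n}$; those from $\log r_{0,\tau(t)}$ and $k(r_{0,\tau(t)})$ produce $-D_n sh(0) - \alpha D_n\log(D_n/(n\Delta Q(0))) + \alpha D_n$; and the $\alpha^2|\ell'|^2/d_2$ term in $a_{0,j}$ — which would diverge if integrated over all of $S_0$ — is instead split as a regular bulk contribution $\tfrac{\alpha^2}{2}\log(b_0^2\Delta Q(0))$ plus a boundary piece $-\tfrac{\alpha^2}{2}\log(D_n/n)$. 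The mixed term $s\alpha\int_{S_0}\nabla h\cdot\nabla\ell\,dA$ reduces via the divergence theorem to $\alpha s(h(b_0)-h(0))$, giving the last explicit term in the statement; the remaining $\frac{s^2}{2}v_{0,h}$ piece of $|k'|^2$ is convergent and is absorbed as in Lemma \ref{putt}.

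The principal obstacle is the careful bookkeeping at the artificial cutoff $t=D_n\asymp n^{1/6}$: each Euler-Maclaurin boundary correction at $D_n$ must be expanded to sufficient order that every contribution of size up to $\bigO(D_n^2\log D_n)$ is retained with its exact prefactor, since those $D_n$-dependent terms will cancel (when combined with Lemma \ref{low_sum} in the eventual proof of Theorem \ref{conical_expansion}) against the $D_n$-dependent terms of $\sum_{j=0}^{D_n-1}\log h_j$. The $\alpha$-dependent pieces require additional care both in the Laplace prefactor $a_{0,j}$ through the cross term $\alpha s \, h'\ell'$ and in isolating the non-integrable $\alpha^2|\ell'|^2$ contribution near the origin.
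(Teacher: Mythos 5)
Your overall strategy matches the paper's: split $\sum_{j=D_n}^{n-1}\log h_j$ into block sums for the $N+1$ components plus the critical-index corrections $T_\nu$, apply Euler--Maclaurin with a lower cutoff at $t=D_n$ on the central block, evaluate the resulting $D_n$-dependent boundary contributions via the small-$\tau$ expansions of $r_{0,\tau}$, $\gamma_0$ and $\Delta Q(r_{0,\tau})$, and for $\nu\ge 1$ rerun the annular-case lemmas with $sh\mapsto k$. The handling of $\Osc_n(s,\alpha)$ through the $\alpha$-dependent $\mu_\nu$, the identification of the divergent $\alpha^2|\ell'|^2$ piece near the origin, and the role of $D_n\asymp n^{1/6}$ as an artificial cutoff are all correct and in the spirit of the paper's Lemmas \ref{low_sum1}--\ref{mult_term}.

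However, your stated attributions of the $D_n$-dependent boundary terms are wrong and would not survive direct verification. The Euler--Maclaurin boundary contribution to $-n\sum_{j=D_n}^{m_0-1}g_{\tau(j)}(r_{0,\tau(j)})$ at the lower cutoff, i.e.\ $n\int_0^{D_n}\gamma_0(t)\,dt-\tfrac{n}{2}\gamma_0(D_n)+\tfrac{n}{12}\gamma_0'(D_n)$, evaluates to
\[
nD_n q(0) + \tfrac{3}{4}D_n^2 - \tfrac{1}{2}\bigl(D_n^2-D_n+\tfrac{1}{6}\bigr)\log\tfrac{D_n}{n\Delta Q(0)}-\tfrac{D_n}{2},
\]
which is Lemma \ref{main_term} and does not coincide with the expression $nD_n q(0)+\tfrac34 D_n^2-\tfrac12(D_n^2+D_n)\log\tfrac{1}{\Delta Q(0)}-\tfrac12(D_n^2-\tfrac16)\log\tfrac{D_n}{n}$ you assign to it. The discrepancy is exactly the $D_n$-dependence contributed by $-\tfrac12\log\Delta Q(r_{0,\tau(j)})$, by the $\alpha$-independent part of $(1+2\alpha)\log r_{0,\tau(j)}$ (namely $-(\tfrac{D_n}{2}-\tfrac14)\log\tfrac{D_n}{n\Delta Q(0)}+\tfrac{D_n}{2}$, which is nonzero even at $\alpha=0$), and by the $-\tfrac{1}{12}\log\tfrac{D_n}{n}+\tfrac16\log\Delta Q(0)$ piece of $\tfrac1n\sum a_{0,j}$; your attribution to ``$\log r_{0,\tau(t)}$ and $k(r_{0,\tau(t)})$'' captures none of this. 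The grand total is right because the pieces were back-solved from the lemma's statement, but the decomposition you present is not a proof; each of Lemmas \ref{main_term}, \ref{lap_term}, \ref{rad_term}, \ref{pert_term}, \ref{mult_term} must be derived independently and the $D_n$ bookkeeping tracked separately. A secondary omission is the error budget: the exponent $n^{-1/12}$ arises because the Laplace remainder $\epsilon_{j,n}=\bigO(j^{-3/2}(\log n)^c)$ sums over $j\ge D_n$ to $\bigO(D_n^{-1/2}(\log n)^c)$, and this balance against Lemma \ref{low_sum1}'s $\bigO(n^{-1/2}(j+1)^{3/2}(\log n)^3)$ is what fixes $D_n=\lceil n^{1/6}\rceil$; you take this for granted without addressing it.
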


Combining Lemma \ref{low_sum} and \ref{high_sum} with estimates from the previous section, it is fairly straightforward to deduce asymptotics for $\log Z_n$.
However, we begin by giving a detailed proof of
Lemma \ref{high_sum} in the next subsection.

\subsection{Proof of the key lemma} We turn to our proof of Lemma \ref{high_sum}. The proof involves new estimations of terms in $\sum_{D_n}^{m_0-1}\log h_{0,j}$, where $D_n$ is given in \eqref{dndef}. Such terms were already treated in our analysis of the annular case, but new subtleties arise since the radii $r_{0,j}$ are not bounded from below in the present case, and also because we allow a Fisher-Hartwig singularity, i.e., $\alpha\ne 0$.

We start with the following estimate. Using earlier notation (from \eqref{gnt}) we write
$$\gamma_0(t)=g_{\tau(t)}(r_{0,\tau(t)}),$$
where $t$ is in $0\le t\le M_0n+\bigO(\log n)$.
As before: $\tau(t)=t/n$, $g_\tau(r)=q(r)-2\tau\log r$ and $r=r_{0,\tau}$ is the solution to $rq'(r)=2\tau$ with $0\le r< b_0 +o(1)$.

\begin{lem}\label{main_term} As $n\to\infty$,
	\begin{align*}
	-n\sum\limits_{j=D_n}^{m_0-1} &g_{\tau(j)}(r_{0,\tau(j)}) = -n^{2}\int\limits_{\mathbb{D}_{b_0}} Q\, \Delta Q\, dA - \frac{n^{2}}{4}\int\limits_{0}^{b_0} sq'(s)^2\, ds + n^{2}M_0^2 \log b_0\\
 &+n\int_{m_0}^{M_0n}\gamma_0(t)\, dt+\frac {n} 2 \gamma_0(m_0)-\frac {n} {12}\gamma_0'(m_0)-n\frac {q(0)} 2+nD_n q(0)\\
   &
   +\frac 3 4 D_n^2
 - \frac 1 {2}(D_n^2-D_n+\frac 1 6)\log\Big(\frac{D_n}{n\Delta Q(0)} \Big)
	  - \frac{D_n}{2}  + \bigO(D_n^{-2}+n^{-1/2} D_n^{5/2}).
 	\end{align*}

\end{lem}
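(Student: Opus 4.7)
Plan: I follow the Euler--Maclaurin strategy of Lemma \ref{l1}, applied to $\gamma_0(t):=g_{\tau(t)}(r_{0,\tau(t)})$ on the interval $[D_n, m_0]$. The new feature compared with the annular case is that $r_{0,\tau}\to 0$ as $\tau\to 0$, so the derivatives $\gamma_0^{(k)}(t)$ blow up near the origin. This is what forces the truncation at $j = D_n := \lceil n^{1/6}\rceil$, leaving the sum $\sum_{j=0}^{D_n-1}\log h_j$ to be treated separately via the sharper Laplace-type estimate of Lemma \ref{low_sum1}.

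\textbf{Euler--Maclaurin on $[D_n, m_0]$.} Applying Theorem \ref{em_formula} with $d=2$ produces the integral $\int_{D_n}^{m_0}\gamma_0(t)\, dt$, the boundary and derivative corrections at $D_n$ and $m_0$, and a remainder $\epsilon$. As in Lemma \ref{l1}, the equilibrium relation $r_{0,\tau}q'(r_{0,\tau}) = 2\tau$ together with Lemma \ref{evolve} yields $\gamma_0'(t) = -\tfrac{2}{n}\log r_{0,\tau(t)}$ and $\gamma_0''(t) = -\bigl(n^2 r_{0,\tau(t)}^2 \Delta Q(r_{0,\tau(t)})\bigr)^{-1}$. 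Since $r_{0,\tau}^2\sim \tau/\Delta Q(0)$ near the origin, one obtains $|\gamma_0^{(k)}(t)| = \bigO(n^{-1}t^{-(k-1)})$ for $k\geq 2$ and $t\geq D_n$, hence $|\epsilon| = \bigO(n^{-1}D_n^{-2})$, contributing $\bigO(D_n^{-2})$ to the total error after multiplication by $-n$.

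\textbf{Bulk integral and the cutoff piece.} I split $\int_{D_n}^{m_0}\gamma_0 = \int_0^{M_0 n}\gamma_0 - \int_0^{D_n}\gamma_0 - \int_{m_0}^{M_0 n}\gamma_0$. The bulk integral $\int_0^{M_0 n}\gamma_0$ is evaluated by the substitution $u = r_{0,\tau(t)}$, $du = dt/(2nu\Delta Q(u))$, exactly as in Lemma \ref{l1} (using the convention $M_{-1}\log 0 := 0$), producing the three $n^2$-terms of the first line of the statement. The third piece $\int_{m_0}^{M_0 n}\gamma_0$ is kept as an output term. For $\int_0^{D_n}\gamma_0$ I Taylor expand near the origin: from $q(r) = q(0) + \Delta Q(0) r^2 + \bigO(r^4)$ and $rq'(r) = 2\tau$ one gets $r_{0,\tau}^2 = \tau/\Delta Q(0) + \bigO(\tau^2)$, whence
$$
\gamma_0(t) = q(0) + \tau - \tau\log\tfrac{\tau}{\Delta Q(0)} + \bigO(\tau^2),\qquad \tau = t/n.
$$
Integrating term by term gives $n\int_0^{D_n}\gamma_0\, dt = n D_n q(0) + \tfrac{3}{4}D_n^2 - \tfrac{1}{2}D_n^2\log\tfrac{D_n}{n\Delta Q(0)}$ up to errors, while the same expansion applied at $t = D_n$ yields $-\tfrac{n}{2}\gamma_0(D_n) = -\tfrac{nq(0)}{2} - \tfrac{D_n}{2} + \tfrac{D_n}{2}\log\tfrac{D_n}{n\Delta Q(0)}$ and $\tfrac{n}{12}\gamma_0'(D_n) = -\tfrac{1}{12}\log\tfrac{D_n}{n\Delta Q(0)}$. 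Assembling all contributions with the signs dictated by Euler--Maclaurin, the coefficient of $\log\tfrac{D_n}{n\Delta Q(0)}$ telescopes to exactly $-\tfrac{1}{2}(D_n^2 - D_n + \tfrac{1}{6})$, matching the claim.

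\textbf{Main obstacle.} The delicate point is uniform error control in the presence of blowup. I must verify that the Euler--Maclaurin remainder remains $\bigO(D_n^{-2})$ even though $\gamma_0^{(4)}(t)$ diverges as $t^{-3}$ near the origin, and that the Taylor residuals from Step 2 combine to the claimed $\bigO(n^{-1/2}D_n^{5/2})$ bound. The exponent $1/6$ in $D_n = \lceil n^{1/6}\rceil$ is precisely the balance between these two competing error sources; a larger cutoff would swell the Taylor residual, while a smaller one would spoil the Euler--Maclaurin estimate.
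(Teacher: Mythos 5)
Your proposal is correct and follows essentially the same route as the paper: an Euler--Maclaurin expansion of order $d=2$ on $[D_n,m_0]$, the change of variables $u=r_{0,\tau(t)}$ for the bulk integral $\int_0^{M_0n}\gamma_0$, and a Taylor expansion of $\gamma_0$ near the origin for the cutoff piece and the boundary corrections at $t=D_n$. The only cosmetic difference is that you evaluate $\int_0^{D_n}\gamma_0\,dt$ by Taylor-expanding $\gamma_0(t)$ directly in $\tau=t/n$ and integrating in $t$, whereas the paper first changes variables to $u=r_{0,\tau}$ and integrates by parts (eq.~\eqref{ibp} and surroundings); both routes land on the same three leading terms $nD_nq(0)+\tfrac34 D_n^2-\tfrac12 D_n^2\log\frac{D_n}{n\Delta Q(0)}$. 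Your error bound $O(\tau^2)$ for the $\gamma_0$-expansion (which uses that odd derivatives of $q$ vanish at $0$ by radial smoothness of $Q$) is slightly tighter than the paper's $O(\tau^{3/2})$, but both are within the stated $O(D_n^{-2}+n^{-1/2}D_n^{5/2})$.
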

\begin{proof}

Using the Euler-Maclaurin formula (Theorem \ref{em_formula} with $d=2$) in a similar way as in the proof of Lemma \ref{l1}, we have
	\begin{equation}\label{em3}\begin{split}
	\sum\limits_{j=D_n}^{m_0-1} g_{\tau(j)}(r_{0,\tau(j)}) =& \int\limits_{D_n}^{m_0} \gamma_0(t)\, dt-\frac{1}{2}\Big(\gamma_{0}(m_{0})-\gamma_{0}(D_n)\Big) + \frac{1}{12}\Big( \gamma'_{0}(m_{0})-\gamma'_{0}(D_n) \Big)
 + \epsilon,\\
	\end{split}
\end{equation}
where $\epsilon$ shall be estimated.

We begin by studying the integral in \eqref{em3}. Using the substitution $u=r_{0,\tau(t)}$ from \eqref{change_var}, we deduce
 that
 	\begin{align}\label{cov3}
	\int\limits_{D_n}^{m_0} \gamma_0(t)\, dt &= 2n (\int\limits_{0}^{b_0}-\int\limits_{0}^{r_n}) (q(s)-sq'(s)\log s) s\Delta Q(s) ds - \int\limits_{m_0}^{M_0 n} \gamma_0(t)\, dt
	\end{align}
where we put $r_n:=r_{0,\tau(D_n)}.$ As in the proof of Lemma \ref{l1} we have	
\begin{equation}\label{cov4}\begin{split}
2n\int_{0}^{b_0} (q(s)-sq'(s)\log s) s\Delta Q(s)\, ds = n\int_{\mathbb{D}_{b_0}} &Q\, \Delta Q\, dA + \frac{n}{4}\int_{0}^{b_0} sq'(s)^2\, ds - n M_0^2 \log b_0.\end{split}
\end{equation}

Next note that our assumption $a_0=0$ implies that $Q(z)$ has a local minimum at $z=0$, so $q'(0)=0$ and $4\Delta Q(0)=4\lim_{r\to 0}(q''(r)+\frac {q'(r)-q'(0)}{r-0})= 2q''(0)$, which is strictly positive.

Recalling that $$\frac {d r_{0,\tau}} {d\tau}=\frac 1 {2r_{0,\tau}\Delta Q(r_{0,\tau})}$$ (see \eqref{difft}),  we find
$\Delta Q(0)\frac {d (r_{0,\tau}^2)}{d\tau}=1+\bigO(\tau)$ as $\tau \to 0_+$, and so, as $r_{0,0}
=0$,
\begin{equation}\label{smalltau}
r_{0,\tau}=\sqrt{\frac \tau {\Delta Q(0)}}+\bigO(\tau),\qquad (\tau\to 0_{+}).
\end{equation}

We can now estimate the error term $\epsilon$ in \eqref{em3}: using $\gamma_0'(t)=-\frac 2 n \log r_{0,\tau(t)}$
we deduce that $\gamma_0''(D_n)=\bigO(\frac 1 {n^2\tau(D_n)})$, $\gamma_0^{(3)}(t)=\bigO(
\frac 1 {n^3\tau(t)^{2}})$, and $$|\gamma_0^{(4)}(t)|\lesssim\frac 1 {n^4\tau(t)^{3}}=\frac 1 {nt^3},\qquad \text{for}\quad t\ge D_n.$$ Hence by Theorem \ref{em_formula} we have $|\epsilon|\lesssim \frac{1}{n^{3}\tau(D_{n})^{2}} = \frac 1 {nD_n^2}$.

	As a consequence, we deduce the approximations as $\tau\to 0_+$,
	\begin{equation}\label{app2}\begin{split}
	q(r_{0,\tau}) &=  q(0) + \tau  + \bigO(\tau^{3/2}),\quad \text{and} \quad \log r_{0,\tau} = \frac{1}{2} \log \Big(\frac{\tau}{\Delta Q(0)} \Big) + \bigO(\tau^{1/2}).\\
	\end{split}
\end{equation}

	 Integrating as in \eqref{cov4} over the interval from $0$ to $r_n$, and then integrating by parts, we deduce that
\begin{equation}\label{ibp}\begin{split}2n&\int_0^{r_n}(q(s)-sq'(s)\log s) s\Delta Q(s)\, ds =\frac n 2\int_0^{r_n}sq(s)(q''(s)+\frac {q'(s)} s)\, ds+\frac{n}{4}\int\limits_{0}^{r_n} sq'(s)^2\, ds\\
& - n[\tau(D_n)]^2 \log r_n=\frac n 2 r_nq(r_n)q'(r_n)
-\frac n 4\int_0^{r_n}sq'(s)^2\, ds- n[\tau(D_n)]^2 \log r_n.
\end{split}\end{equation}

Using that $nr_nq'(r_n)=2n\tau(D_n)=2D_n$ and for small $s$,  $q'(s)=q''(0)s+\bigO(s^2)= 2{\Delta Q(0)} s+\bigO(s^2)$, we first have
\begin{align}\label{ins0}
\frac{n}{4} \int\limits_{0}^{r_n} sq'(s)^2\, ds &= \frac 1 4 n\Delta Q(0)^2 r_n^4 + \bigO(nr_n^5)=\frac 1 4 \frac {D_n^2}
{n}+\bigO(nr_n^5),
\end{align}
and then (using \eqref{app2} and again $r_n q'(r_n)=2\tau(D_n)$)
that
\begin{equation}\label{com3}\begin{split}
2n\int\limits_{0}^{r_n} (q(s)-sq'(s)\log s) &s\Delta Q(s)\, ds  = D_nq(0) + \frac 3 4 \frac{D_n^2}{n}\\
  &- \frac 1 2 \frac{D_n^2}{n}\log\left[\frac{D_n}{n\Delta Q(0)}\right] +\bigO(n[\tau(D_n)]^{5/2}).\\
\end{split}\end{equation}

We also note that
\begin{equation}\label{com4}\begin{split}
\gamma_0(D_n) &= q(r_n)-2\tau(D_n)\log r_n \\
&= q(0) + \tau(D_n)  -\tau(D_n)\log\Big( \frac{\tau(D_n)}{\Delta Q(0)}\Big) + \bigO([\tau(D_n)]^{3/2}) ,\\
\end{split}
\end{equation}
and
\begin{align}\label{com5}
\gamma_0'(D_n) = -\frac{2}{n}\log r_n = -\frac{1}{n}\log\Big( \frac{\tau(D_n)}{\Delta Q(0)}\Big) + \bigO(n^{-1}[\tau(D_n)]^{1/2}) .
\end{align}

A combination of \eqref{em3}-\eqref{cov4} and \eqref{com3}-\eqref{com5} finishes the proof.
\end{proof}

Before proceeding, it is convenient to note the following lemma, which is the natural counterpart of Lemma \ref{ellcor} for central disk droplets. The proof
generalizes immediately, by setting $a_0=0$.

\begin{lem}\label{elllem} With $\ell(z)=2\log|z|$ we have the formula
$$\int\ell\, d\sigma=-(q(b_N)-2\log b_N-q(0)).$$
\end{lem}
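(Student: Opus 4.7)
The plan is to adapt the proof of Lemma \ref{ellcor} essentially verbatim, handling the endpoint $a_0=0$ through the convention $M_{-1}\log a_0 = 0\cdot\log 0 := 0$ from Lemma \ref{W-en-lem}.

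First I would write, using polar coordinates and $d\sigma = \Delta Q\,\mathbf 1_S\, dA$, the decomposition
\begin{align*}
\int_\C \ell\, d\sigma = 2\sum_{\nu=0}^N \int_{a_\nu}^{b_\nu}(\log s)\,2s\,\Delta Q(s)\, ds,
\end{align*}
noting that the integral on $[0,b_0]$ converges since $s\log s\to 0$ at the origin. Using the identity $2s\Delta Q(s)=\tfrac 1 2(sq'(s))'$ and integrating by parts on each piece would then give
\begin{align*}
\int_{a_\nu}^{b_\nu}(\log s)\,2s\,\Delta Q(s)\, ds = M_\nu \log b_\nu - M_{\nu-1}\log a_\nu - \tfrac 1 2\bigl(q(b_\nu)-q(a_\nu)\bigr),
\end{align*}
where I would use $b_\nu q'(b_\nu)=2M_\nu$, $a_\nu q'(a_\nu)=2M_{\nu-1}$ (Lemma \ref{lamm}), and, for $\nu=0$, the fact that the boundary term $sq'(s)\log s\big|_{s=0}$ vanishes because $q'(s)=O(s)$ at the interior minimum $s=0$, consistent with the convention above.

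Next I would recognize the right-hand side as $\tfrac 1 2\bigl(g_{M_{\nu-1}}(r_{\nu,M_{\nu-1}}) - g_{M_\nu}(r_{\nu,M_\nu})\bigr)$, since $r_{\nu,M_{\nu-1}}=a_\nu$ and $r_{\nu,M_\nu}=b_\nu$. Summing over $\nu$ and invoking the branching relation $g_{M_\nu}(r_{\nu+1,M_\nu})=g_{M_\nu}(r_{\nu,M_\nu})$ (Lemma \ref{estimat}), the sum telescopes to $\tfrac 1 2\bigl(g_0(r_{0,0}) - g_1(b_N)\bigr) = \tfrac 1 2\bigl(q(0) - q(b_N) + 2\log b_N\bigr)$. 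Multiplying by $2$ yields the desired formula.

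The only genuine novelty compared with the annular case is the evaluation $g_0(r_{0,0})=q(0)$. By \eqref{smalltau} one has $r_{0,\tau}\sim\sqrt{\tau/\Delta Q(0)}$ as $\tau\to 0_+$, so $r_{0,0}=0$ and $\tau\log r_{0,\tau} = O(\tau\log\tau)\to 0$; thus $g_0(r_{0,0})=q(0)$ holds cleanly, and no other obstacle is expected.
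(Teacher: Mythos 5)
Your proof is correct and follows exactly the approach the paper intends: the paper states only that "the proof generalizes immediately, by setting $a_0=0$" from Lemma \ref{ellcor} (which in turn rests on the integration-by-parts identity and telescoping from Lemma \ref{l36}), and your write-up simply makes explicit the one new point — that the boundary term $sq'(s)\log s$ vanishes at $s=0$ because $q'(0)=0$ and hence $r_{0,0}=0$, giving $g_0(r_{0,0})=q(0)$.
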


The following lemma gives a counterpart to \cite[Lemma 3.4]{BKS} (cf.~also Lemma \ref{l1} above).

\begin{lem} \label{mainterm+}
With $x_\nu=M_\nu n-m_\nu$ we have as $n\to\infty$
\begin{align*}-n\sum_{j=D_n}^{m_0-1}g_{\tau(j)}&(r_{0,\tau(j)})-n\sum_{\nu=1}^{N}\sum_{j=m_{\nu-1}}^{m_\nu-1}g_{\tau(j)}(r_{\nu,\tau(j)})\\
&=-n^2I[\sigma]-\frac n 2 \int\ell\, d\sigma
 +\frac 1 6\log b_N  +\sum_{\nu=0}^{N-1}\left(x_\nu^2+x_\nu+\frac 1 6\right)\log\rho_\nu+nD_n q(0)
   +\frac 3 4 D_n^2\\
&\quad
 - \frac 1 {2}(D_n^2-D_n+\frac 1 6)\log\Big(\frac{D_n}{n\Delta Q(0)} \Big)
		  - \frac{D_n}{2}+\bigO(D_n^{-2}+n^{-\frac 1 2}D_n^{\frac 5 2}).
\end{align*}
\end{lem}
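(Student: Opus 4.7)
The plan is to combine Lemma \ref{main_term}, which already handles the central disk contribution $-n\sum_{j=D_n}^{m_0-1}g_{\tau(j)}(r_{0,\tau(j)})$ with the extra care needed near $r=0$, with an Euler--Maclaurin analysis of the annular sums $-n\sum_{j=m_{\nu-1}}^{m_\nu-1}g_{\tau(j)}(r_{\nu,\tau(j)})$ for $\nu = 1,\dots,N$, following exactly the template of Lemma \ref{l1}. Since $r_{\nu,\tau} \ge a_\nu > 0$ uniformly for these $\nu$, the derivatives of $\gamma_\nu(t)$ remain of order $\bigO(n^{-k})$ to all orders $k$, and the Euler--Maclaurin remainder is $\bigO(n^{-3})$, exactly as in the annular case.

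For each $\nu \geq 1$ I would decompose $\int_{m_{\nu-1}}^{m_\nu}\gamma_\nu(t)\,dt$ into $\int_{M_{\nu-1}n}^{M_\nu n}\gamma_\nu\,dt$ plus corrections on the short intervals $[m_{\nu-1},M_{\nu-1}n]$ and $[m_\nu,M_\nu n]$. The change of variable $u=r_{\nu,\tau(t)}$ yields the identity \eqref{main}. Summing over $\nu=1,\dots,N$ and combining with the terms $-n^2\int_{\D_{b_0}}Q\,\Delta Q\,dA - (n^2/4)\int_0^{b_0}sq'(s)^2\,ds + n^2 M_0^2\log b_0$ already extracted by Lemma \ref{main_term}, Lemma \ref{W-en-lem} (applied with the convention $M_{-1}^2\log a_0=0$) collapses everything to $-n^2I_Q[\sigma]$.

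The boundary corrections require some bookkeeping. The Euler--Maclaurin endpoint terms $\tfrac{n}{2}(\gamma_\nu(m_\nu)-\gamma_\nu(m_{\nu-1}))$ for $\nu\geq 1$ telescope with the residual $\tfrac{n}{2}\gamma_0(m_0)$ from Lemma \ref{main_term}, producing $\tfrac{n}{2}\gamma_N(m_N)-\tfrac{n}{2}\sum_{\nu=0}^{N-1}(\gamma_{\nu+1}(m_\nu)-\gamma_\nu(m_\nu))$. Combining this with the $-nq(0)/2$ term from Lemma \ref{main_term} and using $\gamma_N(m_N)=q(b_N)-2\log b_N$ together with Lemma \ref{elllem} yields exactly $-\tfrac{n}{2}\int\ell\,d\sigma$; the estimate $\gamma_{\nu+1}(m_\nu)-\gamma_\nu(m_\nu)=-\tfrac{2x_\nu}{n}\log\rho_\nu+\bigO(n^{-2})$ from \eqref{stock} accounts for the $\sum_\nu x_\nu\log\rho_\nu$ correction. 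Analogously, the telescoping of $\tfrac{1}{12}(\gamma_\nu'(m_\nu)-\gamma_\nu'(m_{\nu-1}))$ combined with the $-\tfrac{n}{12}\gamma_0'(m_0)$ term from Lemma \ref{main_term}, $\gamma_N'(m_N)=-(2/n)\log b_N$ and $\gamma_{\nu+1}'(m_\nu)-\gamma_\nu'(m_\nu)=(2/n)\log\rho_\nu+\bigO(n^{-2})$, gives $\tfrac{1}{6}\log b_N + \tfrac{1}{6}\sum_{\nu=0}^{N-1}\log\rho_\nu$.

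The main subtlety lies at the \emph{interface} between the central disk and the first annulus. The integral $+n\int_{m_0}^{M_0n}\gamma_0(t)\,dt$ left over from Lemma \ref{main_term} must be matched against $-n\int_{m_0}^{M_0 n}\gamma_1(t)\,dt$ arising from the decomposition of $\int_{m_0}^{m_1}\gamma_1\,dt$ in the $\nu=1$ Euler--Maclaurin analysis. Their sum is $-n\int_{m_0}^{M_0n}(\gamma_1-\gamma_0)(t)\,dt$, which by Lemma \ref{estimat} evaluates to $x_0^2\log\rho_0+\bigO(n^{-1})$, providing the $\nu=0$ contribution to $\sum_\nu x_\nu^2\log\rho_\nu$. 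For $\nu\ge 1$ the corresponding $x_\nu^2\log\rho_\nu$ is produced entirely within the annular analysis via \eqref{rest}. The stated error bound $\bigO(D_n^{-2}+n^{-1/2}D_n^{5/2})$ comes entirely from Lemma \ref{main_term}, while all other estimates contribute $\bigO(n^{-1})$ errors which are absorbed.
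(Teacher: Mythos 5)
Your proposal is correct and follows essentially the same approach as the paper: combine Lemma \ref{main_term} for the disk contribution with the Euler--Maclaurin analysis from the proof of Lemma \ref{l1} for the annular contributions, collapse the leading integrals via Lemma \ref{W-en-lem}, telescope the endpoint and derivative corrections, use the estimate \eqref{stock} at $\tau=\tau(m_\nu)$ to produce $\sum_\nu x_\nu\log\rho_\nu$, and use the estimate \eqref{rest} (together with the leftover $n\int_{m_0}^{M_0n}\gamma_0\,dt$ term from Lemma \ref{main_term} paired against the $\nu=1$ short-interval correction) to produce $\sum_\nu x_\nu^2\log\rho_\nu$. The paper states the same chain of reductions more tersely ("we use the approximation from (the proof of) Lemma \ref{l1}" plus the counterpart of \eqref{3est}), so your write-up is a faithful filling-in of the omitted details.
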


\begin{proof} In the first sum $\sum_{j=D_n}^{m_0-1} g_{\tau(j)}(r_{0,\tau(j)})$ we use the approximation in Lemma \ref{main_term} and in the second one $\sum_{\nu=1}^{N}\sum_{j=m_{\nu-1}}^{m_\nu-1}g_{\tau(j)}(r_{\nu,\tau(j)})$ we use the approximation from (the proof of) Lemma \ref{l1}: for each $\nu$ with $1\le\nu\le N$ we have
\begin{align*}\sum_{j=m_{\nu-1}}^{m_\nu-1}g_{\tau(j)}(r_{\nu,\tau(j)})&=nI_{Q,\nu}[\sigma]+\int_{m_{\nu-1}}^{M_{\nu-1}n}\gamma_\nu(t)\,dt-\int_{m_\nu}^{M_\nu n}\gamma_\nu(t)\, dt\\
&-\frac 12(\gamma_\nu(m_\nu)-\gamma_\nu(m_{\nu-1}))+\frac 1 {6n}\log \rho_\nu+\bigO(n^{-2}).
\end{align*}

By straightforward manipulations using Lemma \ref{W-en-lem}, the estimates \eqref{rest}, \eqref{2est} and the following easily verified counterpart to \eqref{3est},
\begin{align*}\frac 1 {12}\gamma_0'(m_0)+&\frac 1 {12}\sum_{\nu=1}^N(\gamma_\nu'(m_\nu)-\gamma_\nu'(m_{\nu-1}))=-\frac 1 {6n}\log b_N-\frac 1 {6n}\sum_{\nu=0}\log \rho_\nu+\bigO(n^{-2}),
\end{align*}
one now finishes the proof of the lemma. We omit details.

\end{proof}

In the next two lemmas, we adapt the computations in \cite[Lemma 3.5]{BKS} to the situation with gaps.

\begin{lem}\label{lap_term} As $n\to\infty$,
\begin{align*}\sum_{j=D_n}^{m_0-1}&\log \Delta Q(r_{0,\tau(j)})+\sum_{\nu=1}^{N}\sum_{j=m_{\nu-1}}^{m_\nu-1}\log \Delta Q(r_{\nu,\tau(j)}) = nE_Q[\sigma]+\frac 12\log\frac {\Delta Q(0)}{\Delta Q(b_N)}\\
&-\sum_{\nu=0}^{N-1}(x_\nu+\frac 12)\log\frac {\Delta Q(b_\nu)}{\Delta Q(a_{\nu+1})} -D_n\log\Delta Q(0)+\bigO(n^{-\frac 1 2}D_n^{\frac 3 2}).
\end{align*}
\end{lem}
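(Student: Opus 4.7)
The strategy is to split the combined sum into the novel central-disk block $\sum_{j=D_n}^{m_0-1}\log\Delta Q(r_{0,\tau(j)})$ and the annular blocks $\sum_{j=m_{\nu-1}}^{m_\nu-1}\log\Delta Q(r_{\nu,\tau(j)})$ for $\nu=1,\ldots,N$, evaluate each by Euler--Maclaurin with $d=1$, and telescope the resulting boundary terms.

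For each annular block $\nu\ge 1$, the solution $r_{\nu,\tau(t)}$ stays uniformly bounded away from $0$, so the derivative estimates performed in the proof of Lemma \ref{l22} apply verbatim. Using the change of variables $u=r_{\nu,\tau(t)}$ (so $dt=2nu\Delta Q(u)\, du$) exactly as there, one obtains
$$\sum_{j=m_{\nu-1}}^{m_\nu-1}\log\Delta Q(r_{\nu,\tau(j)})=n\int_{S_\nu}\log\Delta Q\, d\sigma+x_{\nu-1}\log\Delta Q(a_\nu)-x_\nu\log\Delta Q(b_\nu)+\tfrac 1 2\log\tfrac{\Delta Q(a_\nu)}{\Delta Q(b_\nu)}+\bigO(n^{-1}),$$
with the convention $x_N=0$.

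The central-disk block requires new input. Euler--Maclaurin gives
$$\sum_{j=D_n}^{m_0-1}\log\Delta Q(r_{0,\tau(j)})=\int_{D_n}^{m_0}\log\Delta Q(r_{0,\tau(t)})\, dt-\tfrac 1 2\log\tfrac{\Delta Q(r_{0,\tau(m_0)})}{\Delta Q(r_{0,\tau(D_n)})}+\epsilon.$$
I would perform $u=r_{0,\tau(t)}$, push the upper limit from $m_0$ to $M_0 n$ at the cost of the usual $-x_0\log\Delta Q(b_0)+\bigO(n^{-1})$ correction, and decompose $\int_{r_n}^{b_0}=\int_0^{b_0}-\int_0^{r_n}$ with $r_n:=r_{0,\tau(D_n)}$. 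The first integral produces the bulk contribution $n\int_{S_0}\log\Delta Q\, d\sigma$. For the small-$r$ piece I would invoke $r_n^2=D_n/(n\Delta Q(0))+\bigO(D_n^{3/2}/n^{3/2})$ from \eqref{smalltau}, and $\log\Delta Q(u)=\log\Delta Q(0)+\bigO(u^2)$ near the origin (the latter uses $\partial_r\Delta Q(0)=0$, which follows from radial $C^6$-smoothness since $\Delta Q$ is then an even function of $r$). Together these give
$$2n\int_0^{r_n}\log\Delta Q(u)\, u\Delta Q(u)\, du=D_n\log\Delta Q(0)+\bigO(n^{-1/2}D_n^{3/2}),$$
while continuity at $r_n$ and $r_{0,\tau(m_0)}$ contributes the boundary piece $\tfrac 1 2\log\Delta Q(0)-\tfrac 1 2\log\Delta Q(b_0)+\bigO(D_n/n)$.

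The main technical obstacle is bounding the Euler--Maclaurin remainder $\epsilon$ on the central-disk block, since $\frac{dr_{0,\tau(t)}}{dt}=\frac{1}{2nr_{0,\tau(t)}\Delta Q(r_{0,\tau(t)})}$ is singular as $r\to 0$. A short computation using $\partial_r\Delta Q(r)/r=\bigO(1)$ shows that $\frac{d^2}{dt^2}\log\Delta Q(r_{0,\tau(t)})=\bigO((n^2 r)^{-1})$; after the substitution $dt=2nr\Delta Q\,dr$ the integral $\int_{D_n}^{m_0}(n^2 r)^{-1}\, dt$ becomes a harmless $\bigO(n^{-1})$, hence $\epsilon=\bigO(n^{-1})$. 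Finally, combining the central-disk and annular contributions, using $\sum_{\nu=0}^{N}\int_{S_\nu}\log\Delta Q\, d\sigma=E_Q[\sigma]$, and telescoping $\sum_{\nu=1}^{N}x_{\nu-1}\log\Delta Q(a_\nu)-\sum_{\nu=0}^{N-1}x_\nu\log\Delta Q(b_\nu)=-\sum_{\nu=0}^{N-1}x_\nu\log\tfrac{\Delta Q(b_\nu)}{\Delta Q(a_{\nu+1})}$ together with the analogous telescoping of the $\tfrac 1 2\log$ boundary pieces (which collapse to $\tfrac 1 2\log\tfrac{\Delta Q(0)}{\Delta Q(b_N)}-\tfrac 1 2\sum_{\nu=0}^{N-1}\log\tfrac{\Delta Q(b_\nu)}{\Delta Q(a_{\nu+1})}$), yields the stated asymptotic.
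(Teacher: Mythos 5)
Your proof is correct and follows essentially the same strategy as the paper: Euler--Maclaurin with $d=1$ on each block, the change of variables $u=r_{\nu,\tau(t)}$, isolation and Taylor analysis of the small-$r$ piece near the origin for the central-disk block, and telescoping of the boundary terms. Your handling of the Euler--Maclaurin remainder is actually slightly tighter than the paper's: you invoke $\partial_r\Delta Q(0)=0$ (valid, as $\Delta Q$ is even in $r$ for a radial $C^{2}$ potential) to get $p_0''(t)=\bigO((n^2r)^{-1})$ and hence $\epsilon=\bigO(n^{-1})$, whereas the paper uses a looser estimate and records $\epsilon=\bigO((nD_n)^{-1/2})$ and $\int|p_0''|\lesssim (\log n)/n$; both are absorbed in the final error $\bigO(n^{-1/2}D_n^{3/2})$ so this does not change the conclusion. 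One small remark: your per-block identity for $\nu\ge 1$ carries the boundary term $\tfrac 12\log\tfrac{\Delta Q(a_\nu)}{\Delta Q(b_\nu)}$, which is indeed the correct contribution coming from the $-\tfrac12(h(n)-h(m))$ piece of Euler--Maclaurin (and it telescopes properly with the central-disk contribution $\tfrac 12\log\tfrac{\Delta Q(0)}{\Delta Q(b_0)}$ to give the stated $\tfrac12\log\tfrac{\Delta Q(0)}{\Delta Q(b_N)}-\tfrac12\sum_{\nu=0}^{N-1}\log\tfrac{\Delta Q(b_\nu)}{\Delta Q(a_{\nu+1})}$); the corresponding intermediate display in the paper's proof contains a typo in its boundary term, but your version is the one that is consistent with \eqref{second2} in the proof of Lemma \ref{l22}.
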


\begin{proof} We adapt the proof of Lemma \ref{l22}. To handle the first sum on the left, we use the Euler-Maclaurin formula in the form
\begin{equation}\label{em4}\begin{split}
	\sum\limits_{j=D_n}^{m_0-1} \log &\Delta Q (r_{0,\tau(j)}) = \int\limits_{D_n}^{m_0} \log \Delta Q(r_{0,\tau(t)})\, dt\\ & -\frac{1}{2}\Big(\log \Delta Q(r_{0,\tau(m_0)})
-\log\Delta Q(r_{0,\tau(D_n)}) \Big) +\bigO((nD_n)^{-\frac 1 2}).\\
\end{split}
\end{equation}

The error term in \eqref{em4} follows using appropriate bounds on some derivatives of the function $p_0(t):=\log \Delta Q(r_{0,\tau(t)})$. Indeed, using $\frac {d r_{0,\tau}}{d\tau}=\frac 1 {2r_{0,\tau}\Delta Q(r_{0,\tau})}$ and
$r_{0,\tau}=\bigO(\tau^{1/2})$ as $\tau\to 0_{+}$, we obtain
$$p_0'(D_n)\asymp \frac 1 {n\tau(D_n)^{1/2}}\qquad \text{and}\qquad \int_{D_n}^{m_0}|p_0''(t)|\, dt\lesssim \frac {\log n} n.$$
This gives the desired error term, in view of Theorem \ref{em_formula} with $d=1$.

The remaining terms in the right side of \eqref{em4} are estimated as follows. First note that
$$
\int\limits_{0}^{m_0} \log \Delta Q(r_{0,\tau(t)})\, dt = n \int\limits_{\D_{b_0}} \log \Delta Q\, d\sigma -x_0 \log \Delta Q(b_0) + \bigO(\frac{1}{n}),
$$
and, setting $r_n=r_{0,\tau(D_n)}$,
\begin{align*}\int\limits_{0}^{D_n} \log \Delta Q(r_{0,\tau(t)})\, dt &= 2n\int\limits_{0}^{r_n}  ( \log \Delta Q(s) ) \Delta Q(s) s\, ds =n\int_{\D_{r_n}}(\log\Delta Q)\, d\sigma\\
&=n\tau(D_n)\log \Delta Q(0)+\bigO(n[\tau(D_n)]^{\frac 3 2}).
\end{align*}
Adding up,
\begin{equation}\label{addu}
\int_{D_n}^{m_0}\log \Delta Q(r_{0,\tau(t)})\, dt=n\int\limits_{\mathbb{D}_{b_0}} \log \Delta Q\, d\sigma-x_0\log\Delta Q(b_0)-D_n\log\Delta Q(0) +\bigO(D_n^{\frac 32}n^{-\frac 12}).
\end{equation}
We also note that
\begin{equation}\label{addu2}\log \Delta Q(r_{0,\tau(m_0)})
-\log\Delta Q(r_n)=\log\frac {\Delta Q(b_0)}{\Delta Q(0)}+\bigO(r_n).
\end{equation}
Combining \eqref{em4}, \eqref{addu}, \eqref{addu2}, we have
\begin{equation}\label{sill}
\begin{split}
\sum\limits_{j=D_n}^{m_0-1} \log \Delta Q (r_{0,\tau(j)}) =& n \int\limits_{\mathbb{D}_{b_0}} \log \Delta Q\, d\sigma -\frac{1}{2}\log\frac{\Delta Q(b_0)}{\Delta Q(0)}
\\ & - x_0 \log \Delta Q(b_0) -D_n \log \Delta Q(0) + \bigO(D_n^{\frac 12}n^{-\frac 12}).
\end{split}
\end{equation}
We now add the estimates from the proof of Lemma \ref{l22}: for $1\le \nu\le N$,
\begin{align*}\sum_{j=m_{\nu-1}}^{m_\nu-1}&\log\Delta Q(r_{\nu,\tau(j)})=n\int_{S^\nu}\log\Delta Q\, d\sigma+x_{\nu-1}\log\Delta Q(a_\nu)\\
&-x_\nu\log\Delta Q(b_\nu)-\frac 1 2 (\log \Delta Q(a_\nu)-\log \Delta Q(b_{\nu-1}))+\bigO(n^{-1}),
\end{align*}
where we recall that $S^{\nu}=A(a_{\nu},b_{\nu})$. Adding these equations, we finish the proof of the lemma.
\end{proof}

\begin{lem} \label{rad_term} As $n\to\infty$,
\begin{equation}\label{llc}\begin{split}\sum_{j=D_n}^{m_0-1}&\log r_{0,\tau(j)}+\sum_{\nu=1}^N \sum_{j=m_{\nu-1}}^{m_\nu-1}\log r_{\nu,\tau(j)}=\frac n 2\int\ell\, d\sigma-\frac 1 2\log b_N\\
&-\sum_{\nu =0}^{N-1}(x_\nu+\frac 1 2)\log\rho_\nu-(\frac {D_n}2-\frac 14)\log \Big(\frac {D_n}{n\Delta Q(0)}\Big)+ \frac{1}{2}D_n +\bigO(D_n^{-1}+n^{-\frac 12}D_n^{\frac 32}).
\end{split}
\end{equation}
\end{lem}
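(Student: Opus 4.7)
The plan is to split the left-hand side into the central-disk sum $\sum_{j=D_n}^{m_0-1}\log r_{0,\tau(j)}$ and the annular sums $\sum_{\nu=1}^{N}\sum_{j=m_{\nu-1}}^{m_\nu-1}\log r_{\nu,\tau(j)}$, and to treat them in parallel to the proof of Lemma \ref{l36}. For each block $\nu\ge 1$, the radii $r_{\nu,\tau(j)}$ stay bounded away from $0$, so Lemma \ref{l36}'s argument goes through verbatim and yields $\frac n2\int_{S_\nu}\ell\, d\sigma-(x_\nu+\frac12)\log b_\nu+(x_{\nu-1}+\frac12)\log a_\nu+\bigO(n^{-1})$, via Euler--Maclaurin with $d=1$ together with the change of variables $u=r_{\nu,\tau(t)}$ supplied by \eqref{difft}.

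The novel case is the central-disk sum, where $r_{0,\tau}\to 0$ as $\tau\to 0_+$. I would apply Euler--Maclaurin with $d=1$ on $[D_n,m_0]$ and control the remainder via $p_0(t):=\log r_{0,\tau(t)}$, for which $p_0'(t)=1/\bigl(2n\,r_{0,\tau(t)}^2\Delta Q(r_{0,\tau(t)})\bigr)$ from \eqref{difft}; the asymptotic $r_{0,\tau}^2\sim \tau/\Delta Q(0)$ given by \eqref{smalltau} then produces $|p_0''(t)|\asymp t^{-2}$, whence $\int_{D_n}^{m_0}|p_0''(t)|\,dt=\bigO(D_n^{-1})$, which is the source of the error term $\bigO(D_n^{-1})$ in \eqref{llc}.

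For the integral $\int_{D_n}^{m_0}\log r_{0,\tau(t)}\, dt$, I would decompose $\int_{D_n}^{m_0}=\int_0^{M_0 n}-\int_0^{D_n}-\int_{m_0}^{M_0 n}$. The first piece, under the substitution $u=r_{0,\tau(t)}$, equals $n\int_{\D_{b_0}}\log|z|\, d\sigma=\frac n2\int_{\D_{b_0}}\ell\, d\sigma$; the third piece contributes $-x_0\log b_0+\bigO(n^{-1})$ since $r_{0,\tau(t)}\to b_0$ on the (length $\le 1$) interval; for the second, the expansion \eqref{app2} gives $\log r_{0,\tau}=\tfrac12\log(\tau/\Delta Q(0))+\bigO(\tau^{1/2})$, and integration on $[0,D_n]$ produces $\frac{D_n}{2}\log\bigl(\frac{D_n}{n\Delta Q(0)}\bigr)-\frac{D_n}{2}+\bigO(n^{-1/2}D_n^{3/2})$. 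The Euler--Maclaurin boundary correction contributes $-\frac 12\log r_{0,\tau(m_0)}=-\frac12\log b_0+\bigO(n^{-1})$ and $+\frac 12\log r_{0,\tau(D_n)}=\frac 14\log\bigl(\frac{D_n}{n\Delta Q(0)}\bigr)+\bigO((D_n/n)^{1/2})$.

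Finally I would add the disk and annular contributions. The $\ell$-integrals merge into $\frac n 2\int_S\ell\, d\sigma$, and a brief telescoping on the $\log b_\nu,\log a_\nu$ terms, using $x_N=0$ and $\rho_\nu=b_\nu/a_{\nu+1}$, collapses the boundary algebra into $-\frac12\log b_N-\sum_{\nu=0}^{N-1}(x_\nu+\frac12)\log\rho_\nu$. Combining with the explicit $D_n$-dependent terms yields \eqref{llc}. The main delicate point is the careful bookkeeping of the cutoff-dependent terms at $\tau(D_n)$: the quantity $\log r_{0,\tau(D_n)}\sim\frac12\log(D_n/(n\Delta Q(0)))$ enters both the integral and the Euler--Maclaurin boundary term, and combines precisely to give the coefficient $\bigl(\frac{D_n}{2}-\frac14\bigr)$ rather than $\frac{D_n}{2}$, while the $\bigO(\tau^{1/2})$ tail in \eqref{app2}, when integrated, produces the $\bigO(n^{-1/2}D_n^{3/2})$ component of the error.
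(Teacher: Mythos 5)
Your proof is correct and follows essentially the same approach as the paper: you split into disk and annular contributions, apply Euler--Maclaurin with the same $\bigO(D_n^{-1})$ error control via $p_0''\asymp t^{-2}$, use the decomposition $\int_{D_n}^{m_0}=\int_0^{M_0n}-\int_0^{D_n}-\int_{m_0}^{M_0n}$, and telescope the boundary algebra using $x_N=0$ exactly as the paper does. The only minor cosmetic deviation is that you evaluate $\int_0^{D_n}\log r_{0,\tau(t)}\,dt$ by directly integrating the Taylor expansion \eqref{app2} in $t$, whereas the paper changes variables to $s=r_{0,\tau(t)}$ and integrates by parts; both routes yield the same explicit terms and the same $\bigO(n^{-1/2}D_n^{3/2})$ error.
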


\begin{proof} 	Using the Euler-Maclaurin formula as in the proof of Lemma \ref{l36}
	\begin{equation}\label{em5}
	\sum\limits_{j=D_n}^{m_0-1} \log r_{0,\tau(j)} = \int\limits_{D_n}^{m_0} \log r_{0,\tau(t)}\, dt -\frac{1}{2} \Big(\log r_{0,\tau(m_0)}-\log r_{0,\tau(D_n)}  \Big) + \bigO(D_n^{-1} ),
	\end{equation}
where the error term is obtained by a similar computation as in Lemma \ref{lap_term}.

Continuing as in the proof of Lemma \ref{l36}, we integrate by parts and obtain (using Lemma \ref{elllem})
	\begin{equation}\label{ibp5}\begin{split}
	\int\limits_{0}^{M_0 n} \log r_{0,\tau(t)}\, dt &= 2n \int\limits_{0}^{b_0} s\Delta Q(s) \log s\, ds =\frac n 2 \int\ell\, d\sigma,
\end{split}
	\end{equation}
	and
	\begin{equation}\label{rem5}
	-\int\limits_{m_0}^{M_0 n} \log r_{0,\tau(t)}\, dt = -x_0 \log b_0 + \bigO(\frac{1}{n}).
	\end{equation}

Recalling that $q'(0)=0$, $q''(0)=2\Delta Q(0)$, $r_{0,\tau} = \sqrt{ \frac{\tau}{\Delta Q(0)}} + \bigO(\tau)$,
and setting
$r_n=r_{0,\tau(D_n)}$,
	we also deduce that
\begin{equation}\begin{split}\label{rem6}
\int\limits_{0}^{D_n} &\log r_{0,\tau(t)}\, dt  = 2n\int_0^{r_n}s\Delta Q(s)\log s\, ds=\frac n 2 r_nq'(r_n)\log r_n-\frac n 2 \int_0^{r_n}q'(s)\, ds  \\
&=D_n\log r_n-\frac n 2 q''(0)(\frac{r_n^2}2+\bigO(r_n^3))=\frac 1 2 D_n\log \frac {\tau(D_n)}{\Delta Q(0)}-\frac {D_n} 2+\bigO(D_n^{3/2}/\sqrt{n}).
\end{split}
\end{equation}
Finally,
$$-\frac 1 2 (\log r_{0,\tau(m_0)}-\log r_{0,\tau(D_n)} )=-\frac 1 2 \log b_0+\frac 1 4 \log \frac {\tau(D_n)}{\Delta Q(0)}+\bigO(\sqrt{\tau(D_n)}).$$

Inserting \eqref{ibp5}, \eqref{rem5}, \eqref{rem6} in \eqref{em5}, we find the asymptotic formula
	\begin{equation}\label{radd}\begin{split}
	\sum\limits_{j=D_n}^{m_0-1} \log r_{0,\tau(j)} =& \frac n 2 (2M_0\log b_0+q(0)-q(b_0)) -(\frac {D_n}2-\frac 14)\log \Big(\frac {D_n}{n\Delta Q(0)}\Big)\\
&-(x_0+\frac{1}{2})\log b_0+ \frac{1}{2}D_n +\bigO(D_n^{-1}) .
	\end{split}\end{equation}

For $\nu\ge 1$, we have by Lemma \ref{l36} (and its proof)
\begin{equation}\label{sadd}\begin{split}\sum_{j=m_{\nu-1}}^{m_\nu-1}\log r_{\nu,\tau(j)}&=-\frac 1 2 g_{M_\nu}(r_{\nu,M_\nu})+\frac 12g_{M_{\nu-1}}(r_{\nu,M_{\nu-1}})\\
&-x_\nu\log b_\nu+x_{\nu-1}\log a_\nu
-\frac 1 2 (\log b_\nu-\log a_\nu)+\bigO(n^{-1}).\\
\end{split}\end{equation}

Adding \eqref{radd} and \eqref{sadd} for $\nu=1,\ldots,N$ and using
$g_{M_{\nu-1}}(r_{\nu-1,M_{\nu-1}})=g_{M_{\nu-1}}(r_{\nu,M_{\nu-1}}),$
 we conclude the proof of the lemma.
\end{proof}

We next consider the contribution coming from the perturbation $sh/n$.

\begin{lem}\label{pert_term} For $|s|\le \log n$ we have, as $n\to\infty$,
\begin{align*}\sum\limits_{j=D_n}^{m_0-1} sh(r_{0,\tau(j)})&+\sum_{\nu=1}^N \sum_{j=m_{\nu-1}}^{m_\nu-1}sh(r_{\nu,\tau(j)})=ns\int_\C h\, d\sigma+\frac 1 2 s(h(0)-h(b_N))\\
&+\sum_{\nu=0}^{N-1}(x_\nu+\frac 12)s(h(a_{\nu+1})-h(b_\nu))-D_n sh(0)+\bigO(n^{-\frac 1 2}D_n^{\frac 3 2}\cdot |s|).
\end{align*}
\end{lem}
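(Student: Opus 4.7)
The strategy closely parallels that of Lemma \ref{rad_term}, with the disk block $D_n\le j<m_0$ handled separately via a careful Euler--Maclaurin analysis near $r=0$, while each annular block $m_{\nu-1}\le j<m_\nu$ (for $\nu\ge 1$) is treated exactly as in Lemma \ref{l37}.

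\textbf{Annular blocks.} For $\nu\ge 1$, applying Theorem \ref{em_formula} with $d=1$ to $t\mapsto sh(r_{\nu,\tau(t)})$, the change of variables $u=r_{\nu,\tau(t)}$ (so $dt=2nu\Delta Q(u)\,du$) gives
$\int_{M_{\nu-1}n}^{M_\nu n} sh(r_{\nu,\tau(t)})\,dt = ns\int_{S_\nu} h\,d\sigma$.
The usual corrections $\int_{m_{\nu-1}}^{M_{\nu-1}n}$, $\int_{m_\nu}^{M_\nu n}$ together with the boundary term contribute $(x_{\nu-1}+\tfrac12)sh(a_\nu)-(x_\nu+\tfrac12)sh(b_\nu)$ modulo $\bigO(|s|/n)$, the error term being controlled because $|p_\nu''(t)|=\bigO(n^{-2})$ uniformly away from $r=0$ (just as in Lemma \ref{l22}).

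\textbf{Disk block.} For the sum from $D_n$ to $m_0-1$, Euler--Maclaurin with $d=1$ gives
\begin{equation*}
\sum_{j=D_n}^{m_0-1} sh(r_{0,\tau(j)})=\int_{D_n}^{m_0} sh(r_{0,\tau(t)})\,dt-\frac{s}{2}\bigl(h(r_{0,\tau(m_0)})-h(r_{0,\tau(D_n)})\bigr)+\epsilon,
\end{equation*}
where, using $\frac{d}{dt}h(r_{0,\tau(t)})=\frac{h'(r_{0,\tau(t)})}{2nr_{0,\tau(t)}\Delta Q(r_{0,\tau(t)})}$ and the expansion $r_{0,\tau}\sim\sqrt{\tau/\Delta Q(0)}$ from \eqref{smalltau}, one checks $\epsilon=\bigO(|s|n^{-1/2}D_n^{-1/2})$. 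Splitting the integral as $\int_0^{M_0n}-\int_{m_0}^{M_0n}-\int_0^{D_n}$ and applying $u=r_{0,\tau(t)}$, the first piece evaluates exactly to $ns\int_{S_0} h\,d\sigma$, and the middle piece contributes $x_0sh(b_0)+\bigO(|s|/n)$. For the inner piece we expand $h(u)u\Delta Q(u)=h(0)\Delta Q(0)u+\bigO(u^2)$ near $0$, giving
\begin{equation*}
\int_0^{D_n} sh(r_{0,\tau(t)})\,dt=2ns\int_0^{r_n}h(u)u\Delta Q(u)\,du=D_nsh(0)+\bigO\bigl(|s|D_n^{3/2}n^{-1/2}\bigr),
\end{equation*}
where $r_n=r_{0,\tau(D_n)}=\bigO(\sqrt{D_n/n})$. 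The boundary correction $-\tfrac{s}{2}(h(r_{0,\tau(m_0)})-h(r_{0,\tau(D_n)}))$ equals $-\tfrac{s}{2}(h(b_0)-h(0))+\bigO(|s|\sqrt{D_n/n})$. Collecting everything yields
\begin{equation*}
\sum_{j=D_n}^{m_0-1} sh(r_{0,\tau(j)})=ns\int_{S_0} h\,d\sigma-(x_0+\tfrac12)sh(b_0)+\tfrac{s}{2}h(0)-D_nsh(0)+\bigO\bigl(|s|D_n^{3/2}n^{-1/2}\bigr).
\end{equation*}

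\textbf{Combining.} Summing the annular contributions for $\nu=1,\ldots,N$ with the disk contribution, the $\int_{S_\nu}h\,d\sigma$ terms assemble into $\int_\C h\,d\sigma$. The boundary terms telescope: the coefficient $(x_{\nu-1}+\tfrac12)sh(a_\nu)$ from block $\nu\ge 1$ pairs with the $-(x_{\nu-1}+\tfrac12)sh(b_{\nu-1})$ from the previous step to form $(x_{\nu-1}+\tfrac12)s(h(a_\nu)-h(b_{\nu-1}))$, while the unpaired boundary contributions reduce (using $x_N=0$) to $\tfrac{s}{2}(h(0)-h(b_N))$. This produces exactly the stated formula.

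\textbf{Main obstacle.} The delicate point is the integral near $r=0$, because $r_{0,\tau}$ behaves like $\sqrt\tau$ and the naive derivative bounds blow up; one must exploit the cancellation $u\Delta Q(u)\sim u\Delta Q(0)$ to arrive at the clean $D_nsh(0)$ term and keep the remainder at $\bigO(|s|D_n^{3/2}n^{-1/2})$, matching the overall error claimed in the statement.
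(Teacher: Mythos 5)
Your proof is correct and follows essentially the same approach as the paper, which simply states that one replaces $\log\Delta Q$ by $sh$ in the proof of Lemma \ref{lap_term}. You filled in the details of that substitution: the Euler--Maclaurin analysis on the disk block near $r=0$, the change of variables producing the $-D_n sh(0)$ term with error $\bigO(|s|D_n^{3/2}n^{-1/2})$, the treatment of the annular blocks as in Lemma \ref{l22}/\ref{l37}, and the telescoping of boundary terms using $x_N=0$ — all of which match the paper's computation line by line.
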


\begin{proof}
It suffices to replace $\log \Delta Q$ by $sh$ in the proof of Lemma \ref{lap_term}.
\end{proof}

We finally turn to the sum of terms $a_{\nu,j}$, given by (see \eqref{correct} for $\nu=0$),
\begin{equation}\label{anjkk}
a_{\nu,j}=\calB(r_{\nu,\tau(j)})
+ \frac{k'(r_{\nu,\tau(j)})^2 }{2} \frac{1}{d_{2}}  + \frac{k''(r_{\nu,\tau(j)})}{2}\frac{1}{d_{2}} +\frac{k'(r_{\nu,\tau(j)})}{r_{\nu,\tau(j)}}\frac{1}{d_{2}}  -\frac{k'(r_{\nu,\tau(j)})}{2} \frac{d_{3}}{d_{2}^{2}}.
\end{equation}

Here $k=sh+\alpha\ell$ and $\ell(z)=2\log|z|$; the definitions of the functions
$$\mathcal{B}(r_{\nu,\tau(j)}),\quad d_2=d_2(r_{\nu,\tau(j)}),\quad d_3=d_3(r_{\nu,\tau(j)})$$ are found in \eqref{br} and \eqref{dlnk}, respectively.

We also remind of the notation
$e_{\nu,f}$ and $v_{\nu,f}$ for the quantities in \eqref{ehnu} and \eqref{vhnu}.

\begin{lem}\label{mult_term} For $|s|\le \log n$ we have as $n\to\infty$,
\begin{align*}\frac{1}{n}\sum\limits_{j=D_n}^{m_0-1} a_{0,j}&+\frac 1 n \sum_{\nu=1}^N\sum_{j=m_{\nu-1}}^{m_\nu-1}a_{\nu,j}=F_Q[\sigma]-\frac 1 4\sum_{\nu=0}^N\log\frac {\Delta Q(b_\nu)}
{\Delta Q(a_\nu)}+\frac 1 3 \log b_N-\frac 1 {12}\log \frac {D_n} n \\
&+\frac 1 3\sum_{\nu=0}^{N-1}\log\rho_\nu+\frac 16\log\Delta Q(0)+e_k+\frac {s^2} 2 v_{0,h}+\frac 1 2 \sum_{\nu=1}^N v_{\nu,k}+\frac{(1+2\alpha)s}{2} (h(b_0)-h(0))\\
&+\frac 1 2\sum_{\nu=1}^N (k(b_\nu)-k(a_\nu)) -\frac {\alpha+\alpha^2} 2\log \frac {D_n} n+(\alpha+\alpha^2)\log b_0+\frac{\alpha+\alpha^2}2\log\Delta Q(0) + \calE(n),
\end{align*}
where $\calE(n)=\bigO((D_n^{-1}+D_n^{\frac 12}n^{-\frac 12})\cdot (1+s^2))$ and  $c>0$ is arbitrarily small but fixed.
\end{lem}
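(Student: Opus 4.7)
The plan is to handle the sum over annular components $\nu\ge 1$ and the central disk component $\nu=0$ separately, since the key difficulty lies in the latter, where the weight $\ell(z)=2\log|z|$ and the universal function $\mathcal{B}(r)$ both have singularities at $r=0$.

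For each $\nu\ge 1$, the function $k=sh+\alpha\ell$ is $C^\infty$ on a neighbourhood of $S_\nu=A(a_\nu,b_\nu)$, so the proof of Lemma \ref{putt} applies verbatim with ``$sh$'' replaced by ``$k$'' throughout the argument. This gives
\begin{equation*}
\tfrac{1}{n}\sum_{j=m_{\nu-1}}^{m_\nu-1} a_{\nu,j}
= F_Q[S_\nu] -\tfrac{1}{4}\log\tfrac{\Delta Q(b_\nu)}{\Delta Q(a_\nu)}+\tfrac{1}{3}\log\tfrac{b_\nu}{a_\nu}
+\tfrac{1}{2}(k(b_\nu)-k(a_\nu))+e_{\nu,k}+\tfrac{s^2}{2}v_{\nu,h}+\tfrac{\text{cross}}{}+\tfrac{1}{2}v_{\nu,k}+\mathcal{O}(\tfrac{1+s^2}{n}),
\end{equation*}
and summing over $\nu=1,\ldots,N$ gives the expected contribution to the final identity.

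For the disk component, I would adapt the Riemann-sum/change-of-variables computation from Lemma \ref{putt}: approximating
\begin{equation*}
\tfrac{1}{n}\sum_{j=D_n}^{m_0-1}a_{0,j}=\int_{\tau(D_n)}^{M_0}a_{0}(r_{0,\tau})\,d\tau+\mathcal{E}
=\int_{r_n}^{b_0}a_{0}(r)\cdot 2r\Delta Q(r)\,dr+\mathcal{E},
\end{equation*}
with $r_n=r_{0,\tau(D_n)}=\sqrt{D_n/(n\Delta Q(0))}+\mathcal{O}(D_n/n)$. One must check that the error in replacing the Riemann sum by the integral is of order $\mathcal{O}(D_n^{-1}+n^{-1/2}D_n^{1/2})$ uniformly in $|s|\le\log n$; this follows from bounds on the first derivative of the relevant integrands (near $r=r_n$ the derivatives blow up polynomially in $1/r$, but $r_n\gtrsim(D_n/n)^{1/2}$ keeps things under control). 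Then I would split the integrand into three groups: (a) the pure $\mathcal{B}(r)\cdot 2r\Delta Q(r)$ term, (b) the $s$-linear/$s^2$ part (arising from $sh$ in $k$), and (c) the $\alpha$- and $\alpha^2$-contributions arising from the new terms $\ell'(r)=2/r$, $\ell''(r)=-2/r^2$.

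Group (a) is handled by the same identity used in \cite[Lemma 2.2]{BKS}, but with the lower limit kept at $r_n$: the singular $\frac{1}{6r}$ piece in $\mathcal{B}(r)\cdot 2r\Delta Q(r)$ produces $\tfrac{1}{6}\log(b_0/r_n)=\tfrac{1}{6}\log b_0-\tfrac{1}{12}\log(D_n/n)+\tfrac{1}{12}\log\Delta Q(0)+o(1)$, while the regular part combines with the analogous quantities from the annular components to form $F_Q[\sigma]-\tfrac{1}{4}\sum_{\nu}\log\Delta Q(b_\nu)/\Delta Q(a_\nu)$. Group (b) is dealt with exactly as in the computation \eqref{sim1}--\eqref{lol10}, producing $s e_{0,h}+\tfrac{s^2}{2}v_{0,h}+\tfrac{s}{2}(h(b_0)-h(0))$ with the lower boundary contributions at $r_n$ tending to those at $r=0$ (all involved quantities being bounded), up to an error of order $\mathcal{O}(|s|D_n^{1/2}n^{-1/2})$. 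Group (c) requires a careful expansion of $k'(r)=sh'(r)+2\alpha/r$ and $k''(r)=sh''(r)-2\alpha/r^2$: after multiplying by $2r\Delta Q(r)$ and collecting, the singular pieces produce integrands of the form $\alpha^2/r$ and $\alpha/r$, which integrate to $(\alpha+\alpha^2)\log(b_0/r_n)$, yielding precisely the advertised $(\alpha+\alpha^2)\log b_0-\tfrac{\alpha+\alpha^2}{2}\log(D_n/n)+\tfrac{\alpha+\alpha^2}{2}\log\Delta Q(0)$; the regular $\alpha$-cross terms combine with $se_{0,h}$ to build $e_k$ (via the integration-by-parts identities \eqref{lol9}--\eqref{lol10}) and the extra boundary piece $\alpha s(h(b_0)-h(0))$ expected in the statement.

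The main obstacle is this last bookkeeping in group (c): one must match the various $\alpha$-boundary terms at $b_0$ (coming from integration by parts in $\int_{r_n}^{b_0}$) against the boundary form of $e_k$ over $\partial S_0$, checking that the circle $|z|=b_0$ contributes exactly $\tfrac{1}{8\pi}\int_{|z|=b_0}\partial_n(\alpha\ell)|dz|=\tfrac{\alpha}{2}$ and similarly for the $\tfrac{\partial_n\Delta Q}{\Delta Q}$ and $\Delta\log\Delta Q$ pieces, while all contributions from evaluation at $r=r_n$ are absorbed either into the explicit $\log(D_n/n),\log\Delta Q(0)$ terms or into the error $\mathcal{E}(n)=\mathcal{O}((D_n^{-1}+D_n^{1/2}n^{-1/2})(1+s^2))$. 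Once this matching is done and the contributions from all components are summed, the stated formula follows.
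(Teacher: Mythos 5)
Your proposal follows essentially the same route as the paper: for $\nu\ge 1$ apply Lemma \ref{putt} with $k$ replacing $sh$; for the disk use a Riemann-sum/change-of-variables argument over $[r_n,b_0]$, splitting into the $\mathcal{B}$-piece (cited from BKS), the $sh$-piece, the $\alpha\ell$-piece and the $\alpha s$ cross term, exactly as the paper does. One slip to fix: in your display for $\nu\ge 1$, substituting $k$ for $sh$ in Lemma \ref{putt} absorbs $s$ into $k$, so the perturbation contribution is just $\frac{1}{2}(k(b_\nu)-k(a_\nu))+e_{\nu,k}+\frac{1}{2}v_{\nu,k}$; the extra $\frac{s^2}{2}v_{\nu,h}$ and the unresolved ``cross'' placeholder should be removed (the cross terms are already contained in $v_{\nu,k}$ and $e_{\nu,k}$).
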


\begin{proof}  For $\nu=0$, we first observe that the Riemann sum approximation in \cite[Lemma 3.6]{BKS} gives
\begin{equation}\label{Bterm}\begin{split}
\frac{1}{n} \sum_{j=D_n}^{m_0-1} \mathcal{B}(r_{0,\tau(j)})= &F_Q[\mathbb{D}_{b_0}]+\frac{1}{3}\log b_0
- \frac{1}{12}\log \frac{D_n}{n}\\
&-\frac{1}{4}\log \frac{\Delta Q(b_0)}{\Delta Q(0)}+\frac{1}{6}\log \Delta Q(0)+\bigO(D_n^{-1}+D_n^{\frac{1}{2}}n^{-\frac{1}{2}}),\\
\end{split}
\end{equation}
where $F_Q[\mathbb{D}_{b_0}]$ is defined in \eqref{Fq_disc}.

Keeping $\nu=0$, we now estimate the sums corresponding to the other terms in \eqref{anjkk}, i.e., we shall estimate the expression
\begin{align}\label{S0}
\Sigma_0:= \frac{1}{n} \sum_{j=D_n}^{m_0-1} \frac 1 {d_2}&\bigg( \frac{k'(r_{0,\tau(j)})^2 }{2}   + \frac{k''(r_{0,\tau(j)})}{2} +\frac{k'(r_{0,\tau(j)})}{r_{0,\tau(j)}}  - \frac{k'(r_{0,\tau(j)})}{2} \frac{d_{3}}{d_{2}} \bigg).
\end{align}

Using a Riemann sum approximation, we find that $\Sigma_0$ equals to
 \begin{align}\label{S1}\int_{\tau(D_n)}^{M_{0}}  \frac 1 {4\Delta Q(r_{0,\tau})}\bigg( \frac{k'(r_{0,\tau})^2 }{2 }  +  \frac{k''(r_{0,\tau})}{2} +\frac{k'(r_{0,\tau})}{r_{0,\tau}}  - \frac{k'(r_{0,\tau})} 2 \frac { g_{\tau}^{(3)} (r_{0,\tau})}{g_{\tau}^{(2)} (r_{0,\tau})}  \bigg)\,d\tau  +\bigO\Big(\frac {1+|s|} {D_n}  \Big).
\end{align}

Changing variables as in \eqref{change_var} and writing $r_n=r_{0,\tau(D_n)}$, the integral in \eqref{S1} transforms to
\begin{align}\label{transform}
\frac 1 4 \int_{r_{n}}^{b_{0}} r \bigg(  k'(r)^2 +   k''(r) + 3  \frac {k'(r)}r -   k'(r) \frac{\partial_r \Delta Q(r)}{\Delta Q(r)}  \bigg)\,dr.
\end{align}

 The integral \eqref{transform} with $k$ replaced by $sh$ is easily evaluated using the relations
\eqref{lol7}--\eqref{lol10}, giving
\begin{align*}
 \frac 1 4 \int_{r_{n}}^{b_{0}}  r&\bigg( s^2  h'(r)^2 + s h''(r) + 3s  \frac {h'(r)}r - s h'(r) \frac{\partial_r \Delta Q(r)}{\Delta Q(r)}  \bigg)\, dr \\
& = \frac{s}{2} (h(b_0)-h(0)) + se_{0,h} + \frac {s^2} 2 v_{0,h}+\bigO(r_n\cdot (1+s^2)).
\end{align*}

Recalling the asymptotic \eqref{smalltau} for $r_{0,\tau}$ and
substituting $k=\alpha\ell$ in \eqref{transform}, we compute
\begin{align*}
 \frac 1 4 \int_{r_{n}}^{b_{0}}  r\bigg( \alpha^2 &\ell'(r)^2 + \alpha\ell''(r) + 3\alpha  \frac {\ell'(r)}{r} - \alpha \ell'(r) \frac{\partial_r \Delta Q(r)}{\Delta Q(r)}  \bigg)\,dr
 \\
 &= \alpha e_{0,\ell} + \frac{\alpha}{2} \log \frac{\Delta Q(r_{n})}{\Delta Q(0)} + \alpha(1+\alpha)\log \frac{b_{0}}{r_{n}} -\frac{\alpha}{2}\\
&=\alpha e_{0,\ell}
-\frac{\alpha(1+\alpha)}{2}\log \frac{D_{n}}{n} + \alpha(1+\alpha)\log b_{0} + \frac{\alpha(1+\alpha)}{2} \log \Delta Q(0)-\frac{\alpha}{2}
+ \bigO(r_n).
\end{align*}
The contribution in \eqref{transform} proportional to ``$\alpha s$" is equal to
\begin{align}
\frac 1 4 \int_{r_{n}}^{b_{0}} r \bigg( 4\alpha s \frac{h'(r)}{r} \bigg)\,dr = \alpha s \int_{r_{n}}^{b_{0}} h'(r)dr = \alpha s \big( h(b_{0})-h(r_{n}) \big) = \alpha s \big( h(b_{0})-h(0) \big) + \bigO(r_{n}).
\end{align}
Summing up, we find
\begin{equation}\label{sum0}\begin{split}\Sigma_0=&\frac{s}{2} (h(b_0)-h(0)) + e_{0,k} + \frac {s^2} 2 v_{0,h}  -\frac{\alpha(1+\alpha)}{2}\log \frac{D_{n}}{n} +\alpha s \big( h(b_{0})-h(0) \big) \\
&+ \alpha(1+\alpha)\log b_{0} + \frac{\alpha(1+\alpha)}{2} \log \Delta Q(0) -\frac{\alpha}{2}+\bigO(n^{-\frac 1 2}D_n^{\frac 1 2}\cdot (1+s^2)).\\
\end{split}
\end{equation}

Adding \eqref{Bterm} we obtain asymptotics for the term $\frac 1 n \sum_{j=D_n}^{m_0-1}a_{0,j}$. The other terms $\frac 1 n \sum_{j=m_{\nu-1}}^{m_\nu-1} a_{\nu,j}$, for $\nu=1,\ldots,N$ are estimated as in Lemma \ref{putt}, by substituting $sh$ for the function $k$ (which is smooth in a neighbourhood of $S^\nu=A(a_\nu,b_\nu)$ for $\nu\ge 1$). Adding these contributions, it is now straightforward to finish the proof.
\end{proof}

\begin{proof}[Proof of Lemma \ref{high_sum}] We now estimate the sum $\sum_{j=D_n}^{n-1}\log h_j$, where $D_n$ is given by \eqref{dndef}.
As before, our strategy is to begin by estimating the more explicit
\begin{equation}\label{ssum}\sum_{j=D_n}^{m_0-1}\log h_{0,j}+\sum_{\nu=1}^N\sum_{j=m_{\nu-1}}^{m_\nu-1}\log h_{\nu,j}\end{equation} by using the
approximation
\begin{align*}\log h_{\nu,j}&=\frac 1 2 \log \frac {2\pi} n- n\,g_{\tau(j)}(r_\nu,\tau(j))-\frac 1 2 \log \Delta Q(r_{\nu,\tau(j)})\\
&+(1+2\alpha)\log r_{\nu,\tau(j)}+sh(r_{\nu,\tau(j)})+\frac {a_{\nu,j}} n+\epsilon_{j,n},\end{align*}
where $\epsilon_{j,n}=\bigO(\frac {1} {n^2})$ if $j/n\ge c_0>0$ and in general
$\epsilon_{j,n}=\bigO(j^{-\frac 3 2}\log^c n)$, ($c>0$).

The next thing to observe is that
\begin{equation}\label{ssum2}\sum_{j=D_n}^{n-1}\log h_j=\sum_{j=D_n}^{m_0-1}\log h_{0,j}+\sum_{\nu=1}^N\sum_{j=m_{\nu-1}}^{m_\nu-1}\log h_{\nu,j}+\sum_{\nu=0}^{N-1}T_\nu+\bigO(n^{-99})\end{equation}
where $T_\nu$ is defined as in \eqref{tndefn} (but with $\mu_\nu=\mu_\nu(s,\alpha)$ now depending also on $\alpha$). To estimate the $T_\nu$'s, we observe that proof of Lemma \ref{theta_contribution} goes through unchanged, i.e., we have as $n\to\infty$
\begin{equation}\label{zapp}T_\nu=\log[(-\mu_\nu\rho_\nu;\rho_\nu^2)_\infty]+\log[(-\mu_\nu^{-1}\rho_\nu;\rho_\nu^2)_\infty]+\bigO(n^{-1}\log^2 n),\qquad (0\le \nu\le N-1).\end{equation}
Estimating the terms in \eqref{ssum2} using the above lemmas and \eqref{zapp}, we obtain the statement of Lemma \ref{high_sum} after simplification.
\end{proof}

\subsection{Proofs of Theorem \ref{conical_expansion} and Theorem \ref{regular_expansion}}

To the sum in Lemma \ref{high_sum} we now add the sum $\sum_{j=0}^{D_n-1}\log h_j$ found in Lemma \ref{low_sum}.

Adding also $\log(n!)$ using Stirling's approximation it is straightforward to
conclude the proof of Theorem \ref{conical_expansion}. Recalling that the Euler characteristic of a central disk droplet is
$\chi(S)=1$, we also finish our proof of the central disk part of Theorem \ref{regular_expansion} as the special case $\alpha=0$. $\qed$

\section{Shallow outposts}\label{Sec_shallow}
We now prove Theorem \ref{shallow_expansion}. We thus set $S=A(a,b)$ and $S^*=S\cup \{|z|=t\}$ where $0\le a<b<t$ and fix a suitable, radially symmetric function $h(z)=h(r)$, $r=|z|$.

As before, the partition function with respect to $\tilde{Q}=Q-\frac s n h$ is $\log Z_n=\log n!+\sum_{j=0}^{n-1}\log h_j$, where
$$
h_j=2\int_0^\infty r e^{sh(r)} e^{-ng_{\tau(j)}(r)}\, dr,\qquad g_{\tau}(r)=q(r)-2\tau\log r,\qquad \tau(j)=j/n.
$$

The set $S^*\cap[0,\infty)$ decomposes into the components $C_0=[a,b]$ and $C_1=\{t\}$.

Now consider the equation \begin{equation}\label{mutter}\frac d {dr}g_{\tau(j)}(r)=0,\end{equation}
and write $L_n=C\log n$, where $C$ is large enough.
If $j< n-L_n$, then \eqref{mutter} has exactly one significant solution $r_{0,j}$ in $C_0$, while if $j\ge n-L_n$ we must take into consideration two solutions $r_{0,j}$ near $C_0$ and $r_{1,j}$ near $C_1$. (It is straightforward to supply a proof by modification of our argument for Lemma \ref{onepk}.)

With $\eps_n$ as in \eqref{dnen}, we write
$$h_{k,j}=\int_{\{|r-r_{k,j}|<\eps_n\}}2re^{sh(r)}e^{-ng_{\tau(j)}(r)}\, dr,$$
where $k\in \{0,1\}$ if $j \geq n-L_{n}$ while $k=0$ if $j< n-L_{n}$.
By Lemma \ref{loc_lem}, we have, as $n\to\infty$, that
$h_j=(h_{0,j}+h_{1,j})\cdot (1+\bigO(n^{-100}))$ if $j \geq n-L_{n}$ while $h_j=h_{0,j} \cdot (1+\bigO(n^{-100}))$ if $j < n-L_{n}$.

To study the free energy we write
\begin{equation}\label{splitting_shallow}
\sum_{j=0}^{n-1} \log h_j = \sum_{j=0}^{n-1} \log h_{0,j} + \sum_{j=n- L_n}^{n-1} \log\Big(1+ \frac{h_{1,j}}{h_{0,j}}\Big) + \bigO(\frac{1}{n^{99}}).
\end{equation}

The first sum on the right has the same asymptotics as in Theorem \ref{regular_expansion} in the case of a single component. It remains only to study the second sum.

\begin{proof}[Proof of Theorem \ref{shallow_expansion}]
Write $\rho = b/t$ and $\mu = e^{s(h(t) -h(b) )} \sqrt{\frac{\Delta Q(b)}{\Delta Q(t)}}$. In a similar way as in  Lemma \ref{theta_contribution} we have
	$$
	\sum_{j=n-L_n}^{n-1}  \log\Big(1 + \frac{h_{1,j}}{h_{0,j}} \Big) = \sum_{j=n-L_n}^{n-1} \log(1 + \mu \rho^{2(n-j)-1}    ) + \bigO\left(\frac{1+|s|}{n}\right).
	$$

	Shifting the summation index in the sum on the right and letting $n\to\infty$, we conclude that
$
\sum_{j=n- L_n}^{n-1} \log\Big(1+ \frac{h_{1,j}}{h_{0,j}}\Big) = \log[(-\mu\rho;\rho^2)_\infty] +\bigO(n^{-1}(1+|s|))$ with uniform convergence for $|s|\le \log n$.
The proof is complete.
\end{proof}

%\medskip \noindent \textbf{Data availability statement.} There is no data associated to this work.

%\medskip \noindent \textbf{Conflict of interest statement.} There is no conflict of interest.

\medskip \noindent \textbf{Acknowledgements.} CC acknowledges support from the Swedish Research Council, Grant No. 2021-04626.

\end{document}